% !TEX encoding = MacOSRoman
\documentclass[12pt,CJK]{amsart}

\usepackage[margin=1.5in]{geometry}
\usepackage{amscd,amssymb, amsmath,amsfonts, wasysym, mathrsfs, enumerate, mathtools,hhline,color}%enumerate
\usepackage{CJK}
\usepackage{graphicx}
\usepackage[all, cmtip]{xy}
\usepackage{xcolor}
%\linespread{1.1}

\usepackage{url}

\definecolor{hot}{RGB}{65,105,225}

\usepackage[pagebackref=true,colorlinks=true, linkcolor=hot ,  citecolor=hot, urlcolor=hot]{hyperref}%make all the references and links clickable
\subjclass[2010]{Primary: 57P10  Secondary: 55N25, 55N45}
\keywords{Poincar\'e space, Poincar\'e duality}

% to make my stuff easier to write, i use \begin{thm} \end{thm} and here i define the shortcuts:
\theoremstyle{plain}
\newtheorem{theorem}{Theorem}[section]
\newtheorem{proposition}[theorem]{Proposition}

\newtheorem{corollary}[theorem]{Corollary}

\newtheorem{lemma}[theorem]{Lemma}
%\newtheorem{thrm}[theorem]{Theorem}

% to make my stuff easier to write, i use \begin{defn} \end{defn} and here i define the shortcuts:
\theoremstyle{definition}
\newtheorem{definition}[theorem]{Definition}
\newtheorem{remark}[theorem]{Remark}

\newtheorem{example}[theorem]{Example}

\newtheorem{bigthm}{Theorem}
  % Number as %"Theorem A."
\newtheorem{bigcor}[bigthm]{Corollary}

\newtheorem*{acks}{Acknowledgements}

\newtheorem*{out}{Outline}

\numberwithin{equation}{section}

\def\:{\colon\!}

%%%------------------------------------------------------------------------------------------------------------------------------------------------------
%%%------------------------------------------------------------------------------------------------------------------------------------------------------
%%%------------------------------------------------------------------------------------------------------------------------------------------------------
\title[Poincar\'{e} Pairs]{On the various notions of Poincar\'{e} duality pair}

%%%------------------------------------------------------------------------------------------------------------------------------------------------------
%%%------------------------------------------------------------------------------------------------------------------------------------------------------
%%%------------------------------------------------------------------------------------------------------------------------------------------------------
\begin{document}
\author{John R. Klein}
\address{Department of Mathematics, Wayne State University, Detroit, MI 48202, USA}
\email{klein@math.wayne.edu}

\author{Lizhen Qin}
\address{Department of Mathematics, Nanjing University, Nanjing, Jiangsu 210093, P.R.China}
\email{qinlz@nju.edu.cn}

\author{Yang Su}
\address{HLM, Academy of Mathematics and Systems Science, Chinese Academy of Sciences, Beijing 100190, China}
\address{School of Mathematical Sciences, University of Chinese Academy of Sciences, Beijing 100049, China}
\email{suyang@math.ac.cn}

\maketitle

%%%------------------------------------------------------------------------------------------------------------------------------------------------------
%%%------------------------------------------------------------------------------------------------------------------------------------------------------
%%%------------------------------------------------------------------------------------------------------------------------------------------------------
\begin{abstract}
We establish a number of foundational results on Poincar\'{e} spaces which result in several applications.
One  application settles an old conjecture of C.T.C. Wall in the affirmative.
Another result shows that for any natural number $n$, there exists a finite CW pair $(X,Y)$
satisfying relative Poincar\'e duality in dimension $n$ with the property that $Y$ fails to satisfy Poincar\'e duality.
We also prove a relative version of a result of Gottlieb about Poincar\'e duality and fibrations.

\end{abstract}

\setcounter{tocdepth}{1}
\tableofcontents
\addcontentsline{file}{sec_unit}{entry}

%%%------------------------------------------------------------------------------------------------------------------------------------------------------
%%%------------------------------------------------------------------------------------------------------------------------------------------------------
%%%------------------------------------------------------------------------------------------------------------------------------------------------------
\section{Introduction}
Appearing first in the 1960s, the concept of a Poincar\'e duality space proved to be a crucial tool in the surgery classification of high dimensional manifolds. It was Bill Browder who first introduced the notion of a space satisfying Poincar\'{e} duality with integer coefficients \cite{Browder0}, \cite{Browder1}. Browder's  definition is well-suited for doing simply connected surgery but it is inadequate  in the general case. To take into account some of the subtleties of the fundamental group, Spivak later introduced another definition \cite{Spivak}. Although Spivak's definition is adequate in the absolute setting, it is not sufficient for doing surgery in the case of Poincar\'{e} pairs.\footnote{This can be seen from our Theorem \ref{thm_interior_duality} below and Section \ref{sec_history}.} Slightly thereafter, Wall \cite{Wall1} introduced a definition of Poincar\'e  pair which controls both the interior and the boundary sufficiently. We remark that all of the above definitions are equivalent in the simply connected case.

In all the definitions of Poincar\'e space, one sacrifices the local homogeneity property that manifolds possess  and one instead focuses on the overall global structure. This has both advantages and disadvantages. A principal advantage is that many of the standard tools of algebraic topology become available. A disadvantage is that one sometimes desires to have information about local structure.

The current work had its genesis in a program of the authors on manifolds. In our investigations we needed a relative version of the {\it fibration theorem}
that says, roughly, in the presence of suitable finiteness assumptions, the total space of a fibration satisfies Poincar\'e duality if and only if its base and fiber do.\footnote{The result was announced without proof by Quinn in 1972 \cite[rem.~1.6]{Quinn}, and proved by Gottlieb \cite{Gottlieb} in 1979 using manifold techniques. A homotopy theoretic proof was given
by the first author in \cite{Klein3}.} The fibration theorem was only stated in the absolute case, and the relative case remained an open problem. We originally thought that the relative case would follow {\it mutatis mutandis} by  performing a reduction to the absolute case. However, such a reduction turned out to be more difficult and subtle than the authors had originally anticipated. The solution to this in the relative case appears as Theorem \ref{thm_fibration} below.

In proving the relative fibration theorem, we had to apply some foundational results on Poincar\'{e} pairs. However, after carefully checking the literature, we found that some foundational works were not clear and some old problems still remained unsolved. For example, to the best of our knowledge,  Wall's conjecture concerning the relationship between Poincar\'e pairs and their boundaries had not been solved (see Corollary \ref{cor_Wall} below). The ambiguity in these works partially arose because the definition of Poincar\'{e} pairs is not historically uniform (actually, the non-uniformity had been indicated by \cite[sec.~2]{Klein3}). Specifically, Wall's definition \cite[p.\ 215]{Wall1} is different from Spivak's \cite[p.\ 82]{Spivak}. In order to deal with this matter, we had to formulate and prove Theorem \ref{thm_P_crt} below though related (but different) results had been in the literature. A goal of this paper is to resolve the ambiguity and prove some new results.

Wall's definition of Poincar\'{e} pair has been mostly used in surgery theory. Our definition follows Wall's.

In what follows we say $(X,Y)$ is a space pair if $Y \subseteq X$ is a subspace, where $X$ is usually nonempty and $Y$ may be empty. When $Y$ is empty, the statements for $Y$ are considered vacuously true. We say $(X,Y)$ is a (path)-connected pair if $X$ is a (path)-connected space. By an \textit{orientation system} on $X$, we mean an local system on $X$ of infinite cyclic groups.

\begin{definition}\label{def_Poincare_pair}
A pair of topological spaces $(X, Y)$ is a \textit{Poincar\'{e} pair} of \textit{formal dimension} $n$ with $n \geq 0$ if there exist an orientation system $\mathcal{O}$ and a singular homology class $[X] \in H_{n} (X, Y; \mathcal{O})$ such that
\begin{enumerate}[\hspace{0.6cm}(a)]
\item the homomorphisms
\begin{equation}\label{def_Poincare_pair_1}
(\forall \mathcal{B}, \forall m) \ \ [X] \cap\:  H^{m} (X; \mathcal{B}) \overset{\cong}{\longrightarrow} H_{n-m} (X, Y; \mathcal{O} \otimes \mathcal{B})
\end{equation}
and
\begin{equation}\label{def_Poincare_pair_2}
(\forall \mathcal{B}, \forall m) \ \ [X] \cap\: H^{m} (X, Y; \mathcal{B}) \overset{\cong}{\longrightarrow} H_{n-m} (X; \mathcal{O} \otimes \mathcal{B})
\end{equation}
given by cap product with $[X]$ are isomorphisms, where $\mathcal{B}$ is a local system on $X$;

\item the homomorphisms
\begin{equation}\label{def_Poincare_pair_3}
(\forall \mathcal{G}, \forall m) \ \ \partial_{*} [X] \cap \: H^{m} (Y; \mathcal{G}) \overset{\cong}{\longrightarrow} H_{n-1-m} (Y; \mathcal{O}|_{Y} \otimes \mathcal{G})
\end{equation}
are isomorphisms, where $\mathcal{G}$ is a local system on $Y$ and $\partial_{*} [X]$ is the image of $[X]$ under the boundary homomorphism $\partial_{*}\: H_{n} (X, Y; \mathcal{O}) \rightarrow H_{n-1} (Y; \mathcal{O}|_{Y})$.
\end{enumerate}

Here, we are taking singular (co-)homology with twisted coefficients. If the above conditions hold, we call $\mathcal{O}$ the \textit{dualizing system} of $(X,Y)$ and $[X]$ a \textit{fundamental class}.
\end{definition}

\begin{remark}
It is not difficult to prove that for a Poincar\'{e} pair $(X,Y)$, the pair $(\mathcal O, n)$ is determined up to unique isomorphism. The fundamental class is not unique: for a given local system $\mathcal O$, if $(\mathcal O,[X])$ equips $(X,Y)$ with the structure of a Poincar\'e pair, then so does $(\mathcal O,-[X])$. If $X$ is path-connected, these are the only possibilities. However, the following uniqueness statement does hold for a Poincar\'{e} pair $(X,Y)$: any  pair $(\mathcal O, [X])$ satisfying the definition is defined up to unique isomorphism: if $(\mathcal O',[X]')$ is another such pair for $(X,Y)$, then there exists precisely one isomorphism $\phi\: \mathcal O\to \mathcal O'$ such that such that $\phi_\ast([X]) = [X]'$.
\end{remark}

\begin{remark}
If the space $Y$ in Definition \ref{def_Poincare_pair} is empty, then the above (\ref{def_Poincare_pair_3}) is vacuously true, and the isomorphisms \eqref{def_Poincare_pair_1} and \eqref{def_Poincare_pair_2} reduce to the single isomorphism
\[
[X] \cap\: H^{m} (X; \mathcal{B}) \overset{\cong}{\longrightarrow} H_{n-m} (X; \mathcal{O} \otimes \mathcal{B})\, . \ \
% (\forall \mathcal{B}, \forall m).
\]
In this instance we say that $X$ is a \textit{Poincar\'{e} space}.
\end{remark}

\begin{remark}
In Definition \ref{def_Poincare_pair}, we do not assume that $X$ is connected. Applying \eqref{def_Poincare_pair_2} with $\mathcal B = \mathbb Z$, we obtain an isomorphism
\[
[X] \cap\:  H^{n} (X, Y; \mathcal{O}) \overset{\cong}{\longrightarrow} H_{0} (X; \mathbb{Z}).
\]
Since $[X]$ is finitely supported,  the last isomorphism implies that $X$ has only finitely many path components.  We typically require that $X$ and $Y$ have the homotopy type of CW complexes.
\end{remark}

\begin{remark}
In the finitely dominated case, it is enough to check the duality isomorphisms for a single local system $\mathcal B$ defined by the fundamental group. Precisely, when $X$ is path connected we denote by $\mathbf{\Lambda}$ the local system on $X$ associated with $\mathbb{Z} [\pi_{1} (X)]$. If $(X,Y)$ is a finitely dominated pair (cf.~Definition \ref{def_finite_dominated} and Remark \ref{rmk_pair}),  Wall (see \cite[lem.~1.2]{Wall1} and Lemma \ref{lem_rd_dual} in this paper) proved that \eqref{def_Poincare_pair_1} is equivalent to the isomorphisms:
\[
(\forall m) \ \ [X] \cap\: H^{m} (X; \mathbf{\Lambda}) \overset{\cong}{\longrightarrow} H_{n-m} (X, Y; \mathcal{O} \otimes \mathbf{\Lambda}) \, ,\ \
\]
and \eqref{def_Poincare_pair_2} is equivalent to the isomorphisms:
\[
(\forall m) \ \ [X] \cap\: H^{m} (X, Y; \mathbf{\Lambda}) \overset{\cong}{\longrightarrow} H_{n-m} (X; \mathcal{O} \otimes \mathbf{\Lambda})\, . \ \
\]
\end{remark}

One family of our main results are on the relations among \eqref{def_Poincare_pair_1}, \eqref{def_Poincare_pair_2} and (\ref{def_Poincare_pair_3}). As mentioned in \cite[p.\ 797]{Goodwillie_Klein}, there are some redundancies in Definition \ref{def_Poincare_pair}. It's trivial to prove ``$\eqref{def_Poincare_pair_1} + (\ref{def_Poincare_pair_3}) \Rightarrow (\ref{def_Poincare_pair_2})$" and ``$\eqref{def_Poincare_pair_2}+ (\ref{def_Poincare_pair_3}) \Rightarrow \eqref{def_Poincare_pair_1}$". However, we shall construct the following example which shows that ``$\eqref{def_Poincare_pair_1} + (\ref{def_Poincare_pair_2})$" does not imply (\ref{def_Poincare_pair_3}).

\begin{bigthm}[$(\ref{def_Poincare_pair_1}) + (\ref{def_Poincare_pair_2}) \nRightarrow (\ref{def_Poincare_pair_3})$]\label{thm_interior_duality}
For $n \ge 1$, let $A$ be the punctured Poincar\'e homology $3$-sphere and let $C(A\times S^{n-1})$ denote the cone on $A\times S^{n-1}$. Then $(X,Y) := (C(A\times S^{n-1}),A\times S^{n-1})$ is a finite CW pair which satisfies \eqref{def_Poincare_pair_1} and \eqref{def_Poincare_pair_2}, but $Y$ is not a Poincar\'{e} space of any formal dimension.
\end{bigthm}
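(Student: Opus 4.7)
The proof has two independent parts: (i) verifying that $(X,Y) = (CY, Y)$ satisfies \eqref{def_Poincare_pair_1} and \eqref{def_Poincare_pair_2} with formal dimension $n$, and (ii) showing that $Y = A \times S^{n-1}$ is not a Poincar\'e space of any formal dimension.

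For (i), the key observation is that $A$ has the integer homology of a point, although $\pi_1(A) = G$ is the binary icosahedral group of order $120$. By K\"unneth, $Y$ has the integer homology of $S^{n-1}$. Since $X = CY$ is contractible, every local system on $X$ is isomorphic to a constant abelian group $B$, and the long exact sequence of the pair yields $H_\ast(X, Y; B) \cong \tilde H_{\ast-1}(Y; B)$ and $H^\ast(X, Y; B) \cong \tilde H^{\ast-1}(Y; B)$, both concentrated in degree $n$ with value $B$. Taking $\mathcal{O} = \mathbb{Z}$ and $[X]$ a generator of $H_n(X, Y; \mathbb{Z}) \cong \mathbb{Z}$, the cap-product isomorphisms in \eqref{def_Poincare_pair_1} and \eqref{def_Poincare_pair_2} reduce to a single nontrivial check each, where each becomes an identity map $B \to B$.

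For (ii), I work with the local system $\mathbf{\Lambda} = \mathbb{Z}[\pi_1(Y)]$. Since $\tilde P = S^3$ (with $G$ acting freely), the universal cover $\tilde A$ is obtained from $S^3$ by deleting $120$ disjoint open $3$-balls, so $\tilde A \simeq \bigvee_{119} S^2$. Applying Poincar\'e-Lefschetz duality to the orientable compact $3$-manifold-with-boundary $\tilde A$ gives $H^\ast(A; \mathbf{\Lambda}) \cong H_{3-\ast}(\tilde A, \partial \tilde A)$, which is $\mathbb{Z}$ in degree $0$ and $\mathbb{Z}^{119}$ in degree $2$, vanishing elsewhere. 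For $n \geq 3$, $S^{n-1}$ is simply connected, and the K\"unneth formula yields $H^\ast(Y; \mathbf{\Lambda}) \cong H^\ast(A; \mathbf{\Lambda}) \otimes H^\ast(S^{n-1})$ and $H_\ast(\tilde Y) \cong H_\ast(\tilde A) \otimes H_\ast(S^{n-1})$. A putative duality $H^m(Y; \mathbf{\Lambda}) \cong H_{d-m}(Y; \mathcal{O} \otimes \mathbf{\Lambda}) \cong H_{d-m}(\tilde Y)$ would require aligning the two distinguished sets of degrees $\{0, 2, n-1, n+1\}$ and $\{d, d-2, d-(n-1), d-(n+1)\}$ in a way that preserves the ranks $1$ and $119$. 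A short case analysis shows no integer $d$ can do this (the rank-$1$ constraint forces $d = n-1$, but then the rank-$119$ constraint yields $\{2, n+1\} = \{n-3, -2\}$, which is impossible).

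The principal technical step is the computation of $H^\ast(A; \mathbf{\Lambda})$, which rests on the identification $\tilde A \simeq \bigvee_{119} S^2$ and the Poincar\'e-Lefschetz duality for $\tilde A$. A secondary subtlety is the dependence on $n$: for $n = 1$, $Y = A \sqcup A$, and the failure of Poincar\'e duality for $Y$ reduces directly to that for $A$ (by the same rank argument applied to $A$ alone, since $H^2(A;\mathbf{\Lambda}) = \mathbb{Z}^{119}$ forces the formal dimension to be both $0$ and $4$); for $n = 2$, one uses $H^\ast(S^1; \mathbb{Z}[\mathbb{Z}]) = \mathbb{Z}$ concentrated in degree $1$ in the analogous K\"unneth computation, and the same rank-matching argument again produces inconsistent constraints on the would-be formal dimension.
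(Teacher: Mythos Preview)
Your proof is correct. Part (i) follows the same line as the paper: since $X=CY$ is contractible, every local system is constant, and $Y$ has the integral homology of $S^{n-1}$, so the duality maps reduce to identities. (The paper phrases this as ``integer-coefficient duality plus simple connectivity of $X$ implies full duality'' via Lemma~\ref{lem_rd_dual}, but the content is the same.)

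Part (ii), however, is a genuinely different argument from the paper's. The paper proceeds conceptually: since $G=\pi_1(A)$ is perfect, any dualizing system on $A$ is trivial, so a putative formal dimension must be $0$; then Theorem~\ref{thm_covering}(1) forces the universal cover $\tilde A$ to be a $0$-dimensional Poincar\'e space, hence contractible, making $A$ a finite $K(G,1)$ for a nontrivial finite group --- a contradiction. The passage to $Y=A\times S^{n-1}$ is then handled by the product/fibration theorem. Your approach instead computes $H^{\ast}(Y;\mathbf{\Lambda})\cong H^{\ast}(\tilde Y;\mathbb Z)$ and $H_{\ast}(Y;\mathcal O\otimes\mathbf{\Lambda})\cong H_{\ast}(\tilde Y;\mathbb Z)$ explicitly (using $\tilde A\simeq\bigvee_{119}S^{2}$ and K\"unneth) and rules out any formal dimension $d$ by a rank-matching obstruction. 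This is more elementary in that it avoids forward references to Theorems~\ref{thm_covering} and~\ref{thm_fibration}, at the cost of a direct computation. Two small points worth making explicit: (a) the identification $H_{\ast}(Y;\mathcal O\otimes\mathbf{\Lambda})\cong H_{\ast}(\tilde Y)$ uses that $\mathcal O\otimes\mathbf{\Lambda}\cong\mathbf{\Lambda}$ for \emph{any} orientation system $\mathcal O$ (the map $g\mapsto w(g)\otimes g$ is an equivariant bijection), so you need not restrict $\mathcal O$; (b) for $n=3$ the degrees $2$ and $n-1$ coincide, so the rank pattern is $(1,120,119)$ in degrees $(0,2,4)$ rather than four distinct degrees --- your case analysis still goes through, but the parenthetical summary you wrote literally applies only to $n\ge 4$.
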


In the finitely dominated case we shall also prove the following equivalence.

\begin{bigthm}[Equivalence]\label{thm_P_eqv}
Let $(X, Y)$ be a  pair of finitely dominated spaces. Suppose we are given $(\mathcal{O},[X])$ in which $\mathcal{O}$ is an orientation system on $X$ and $[X] \in H_{n} (X, Y; \mathcal{O})$ is a homology class. Then with respect to these data \eqref{def_Poincare_pair_1} is equivalent to \eqref{def_Poincare_pair_2}.
\end{bigthm}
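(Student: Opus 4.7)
The plan is to translate the statement into a chain-level comparison over the group ring $\Lambda = \bZ[\pi_1(X)]$ and then exploit a Poincar\'{e}--Lefschetz self-duality on the derived category of perfect $\Lambda$-complexes. First, I would apply Wall's Lemma (invoked in Remark 1.4 above) to reduce both \eqref{def_Poincare_pair_1} and \eqref{def_Poincare_pair_2} to the single coefficient system $\mathcal{B} = \mathbf{\Lambda}$, so the theorem becomes the assertion that the two cap-product chain maps
\[
\Phi_1 \colon C^{\ast}(X;\mathbf{\Lambda}) \longrightarrow \Sigma^{n} C_{\ast}(X,Y;\mathcal{O}\otimes\mathbf{\Lambda}), \qquad \Phi_2 \colon C^{\ast}(X,Y;\mathbf{\Lambda}) \longrightarrow \Sigma^{n} C_{\ast}(X;\mathcal{O}\otimes\mathbf{\Lambda})
\]
induced by a chain representative of $[X]$ are quasi-isomorphisms simultaneously.

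Next I would pass to the universal cover $\tilde X\to X$ and set $C = C_{\ast}(\tilde X)$ and $C' = C_{\ast}(\tilde X,\tilde Y)$, viewed as chain complexes of $\Lambda$-modules. Write $\Lambda^{\omega}$ for the invertible $\Lambda$-bimodule representing $\mathcal{O}$, i.e.\ $\Lambda$ as abelian group with left action twisted by the orientation character $\omega\colon \pi_1(X)\to\{\pm 1\}$ and right action unchanged. In this language the maps rewrite as
\[
\Hom_{\Lambda}(C,\Lambda) \longrightarrow \Sigma^{n}\Lambda^{\omega}\otimes_{\Lambda} C', \qquad \Hom_{\Lambda}(C',\Lambda) \longrightarrow \Sigma^{n}\Lambda^{\omega}\otimes_{\Lambda} C,
\]
and the finite domination of $X$ and $(X,Y)$ guarantees that both $C$ and $C'$ belong to the perfect derived category $\mathcal{D}^{\mathrm{perf}}(\Lambda)$ (bounded complexes of finitely generated projective $\Lambda$-modules, up to chain equivalence).

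The crux of the argument is to establish the chain-level duality identity
\[
\Phi_2 \ \simeq \ \bigl(\Sigma^{n}\Lambda^{\omega}\otimes_{\Lambda} -\bigr)\circ \mathbb{D}(\Phi_1),
\]
where $\mathbb{D}(-) := \Hom_{\Lambda}(-,\Lambda)$ is the duality functor, interpreted through the canonical biduality equivalence $\mathbb{D}^{2}\simeq\mathrm{id}$ on $\mathcal{D}^{\mathrm{perf}}(\Lambda)$ together with the identification $\Lambda^{\omega}\otimes_{\Lambda}\Lambda^{\omega}\cong \Lambda$ (which holds since $\omega^{2}=1$). Concretely one fixes an Eilenberg--Zilber diagonal $\Delta\colon C\to C\otimes_{\bZ} C$ and realizes $\Phi_1$ as the pairing of a chain lift $\tilde\xi\in C_n\otimes_\Lambda\Lambda^\omega$ against one tensor factor of $\Delta$, and $\Phi_2$ as the analogous pairing against the other factor; the flip symmetry of $\Delta$ (up to chain homotopy) combined with the standard Hom/tensor adjunction then yields the stated identity up to chain homotopy.

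Granting the identity, the conclusion is immediate: both $\mathbb{D}$ and $\Sigma^{n}\Lambda^{\omega}\otimes_{\Lambda}-$ are self-equivalences of $\mathcal{D}^{\mathrm{perf}}(\Lambda)$ and therefore preserve quasi-isomorphisms, so $\Phi_1$ is a quasi-isomorphism if and only if $\Phi_2$ is. The main obstacle I anticipate is the third step: making the duality identity rigorous requires a careful choice of a symmetric chain-level diagonal and a routine but delicate tracking of signs, orientation twists, and the left/right module juggling forced by the anti-involution on $\Lambda$ --- all the ingredients that the finitely dominated/perfect hypothesis is designed to make manageable.
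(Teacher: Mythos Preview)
Your approach is sound and genuinely different from the paper's. The paper does not argue algebraically in the derived category at all: after reducing to a finite CW pair via the product-with-$S^3$ trick (Lemmas~\ref{lem_rd_space} and~\ref{lem_rd_dual}), it chooses a smooth thickening of $(X,Y)$ in a half-space $(\mathbb H^{n+k},\mathbb R^{n+k-1})$, obtaining a compact manifold triad $(M; N_1, N_2)$ with $(M,N_2)\simeq(X,Y)$, and then invokes Proposition~\ref{prop_dual_Thom}. That proposition converts the equivalence of \eqref{def_Poincare_pair_1} and \eqref{def_Poincare_pair_2} for $(M,N_2)$ into the equivalence of the cup- and cap-product formulations of the Thom isomorphism for the complementary pair $(M,N_1)$; the latter equivalence is Theorem~\ref{thm_T_eqv}, which \emph{is} a purely algebraic chain-level statement. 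The authors themselves remark immediately after the proof that their argument is not purely algebraic and relies on manifold theory. Your route via biduality on $\mathcal D^{\mathrm{perf}}(\Lambda)$ is precisely the intrinsic argument that remark implicitly invites, essentially Ranicki's symmetric Poincar\'e complex formalism; it buys independence from the thickening construction (hence from embedding-theoretic input) and makes transparent where finite domination enters, namely in the biduality $\mathbb D^2\simeq\mathrm{id}$. The paper's proof, by contrast, buys a lighter algebraic toolkit at the cost of importing Poincar\'e--Lefschetz duality for manifold triads as a black box. The obstacle you flag is real but standard; one small imprecision worth fixing is that the relevant Alexander--Whitney diagonal is the relative one $C_\bullet(\tilde X,\tilde Y)\to C_\bullet(\tilde X)\otimes_{\mathbb Z} C_\bullet(\tilde X,\tilde Y)$ (and its flipped counterpart), not the absolute diagonal on $C = C_\bullet(\tilde X)$, since $[X]$ lives in relative homology.
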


As an application, we will see that Theorem \ref{thm_P_eqv} implies the following conjecture of C.T.C.~Wall:

\begin{bigcor}[Wall's Conjecture]\label{cor_Wall}
With respect to the assumptions of Theorem \ref{thm_P_eqv}, assume further \eqref{def_Poincare_pair_1} \textit{or} (\ref{def_Poincare_pair_2}). Suppose $Y$ is a Poincar\'{e} space of formal dimension $n-1$. Then $(X, Y)$ is a Poincar\'{e} pair with fundamental class $[X]$.
\end{bigcor}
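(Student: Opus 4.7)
The plan is to combine Theorem~\ref{thm_P_eqv} with a five-lemma argument on the long exact sequences of $(X,Y)$, and then invoke the given Poincar\'e structure on $Y$ to upgrade the resulting boundary duality to one covering all local systems on $Y$. By Theorem~\ref{thm_P_eqv}, the data $(\mathcal{O},[X])$ satisfy both \eqref{def_Poincare_pair_1} and \eqref{def_Poincare_pair_2}, so only \eqref{def_Poincare_pair_3} needs verification. For any local system $\mathcal{B}$ on $X$, compare the cohomology long exact sequence of $(X,Y)$ with coefficients $\mathcal{B}$ to the homology long exact sequence with coefficients $\mathcal{O}\otimes\mathcal{B}$, using cap product with $[X]$ on the $X$ and $(X,Y)$ terms and cap product with $\partial_*[X]$ on the $Y$ terms. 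The resulting ladder commutes up to sign; in any five consecutive positions of the form
\[
H^{m}(X,Y;\mathcal{B}),\ H^{m}(X;\mathcal{B}),\ H^{m}(Y;\mathcal{B}|_Y),\ H^{m+1}(X,Y;\mathcal{B}),\ H^{m+1}(X;\mathcal{B}),
\]
the four outer verticals are cap products with $[X]$ and hence isomorphisms by \eqref{def_Poincare_pair_1} and \eqref{def_Poincare_pair_2}. The five-lemma then shows that
\[
\partial_*[X]\cap \colon H^{m}(Y;\mathcal{B}|_Y) \longrightarrow H_{n-1-m}(Y;\mathcal{O}|_Y\otimes\mathcal{B}|_Y)
\]
is an isomorphism for every $m$ and every local system $\mathcal{B}$ on $X$.

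This only treats coefficients on $Y$ of the form $\mathcal{B}|_Y$. To handle arbitrary local systems, I would first identify $\mathcal{O}|_Y$ with the dualizing system of $Y$. Specializing the displayed isomorphism to $\mathcal{B}=\mathbb{Z}$ and $m=0$ yields, on each connected component $Y_j \subseteq Y$,
\[
H_{n-1}(Y_j;\mathcal{O}|_{Y_j}) \cong H^0(Y_j;\mathbb{Z}) = \mathbb{Z},
\]
generated by $\partial_*[X]|_{Y_j}$. On the other hand, the Poincar\'e hypothesis on $Y$ restricts componentwise to give $Y_j$ a dualizing system $\mathcal{O}_{Y_j}$ and fundamental class $[Y]_{0,j}$, and Poincar\'e duality on $Y_j$ identifies
\[
H_{n-1}(Y_j;\mathcal{O}|_{Y_j}) \cong H^0\bigl(Y_j;\mathcal{O}_{Y_j}^{-1}\otimes\mathcal{O}|_{Y_j}\bigr).
\]
Since the right-hand side is the $\pi_1(Y_j)$-invariants of an infinite cyclic group under a sign character, it equals $\mathbb{Z}$ precisely when that character is trivial. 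Hence $\mathcal{O}|_{Y_j}\cong\mathcal{O}_{Y_j}$.

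Under this identification $\partial_*[X]|_{Y_j}$ and $[Y]_{0,j}$ are both generators of the same copy of $\mathbb{Z}$, so they differ by a sign. Cap product with $\partial_*[X]|_{Y_j}$ therefore equals $\pm$ cap product with $[Y]_{0,j}$, which is an isomorphism on all local systems on $Y_j$ by the given Poincar\'e structure. Summing over components of $Y$ delivers \eqref{def_Poincare_pair_3}. The main obstacle is the previous paragraph: bridging the gap from duality with coefficients pulled back from $X$ (produced by the five-lemma) to duality with arbitrary coefficients on $Y$ forces one to recognize $\mathcal{O}|_Y$ as the already-present dualizing system of $Y$, which is what the Poincar\'e hypothesis on $Y$ makes possible.
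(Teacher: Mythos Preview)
Your argument is correct and matches the paper's proof essentially step for step: the paper invokes Theorem~\ref{thm_P_eqv} and then Lemma~\ref{lem_boundary2}, whose proof in turn unwinds to the five-lemma step (Lemma~\ref{lem_boundary1}) and the orientation-system identification (Lemma~\ref{lem_dual_orientation}) that you carry out explicitly. The only difference is packaging---you wrote the lemmas inline rather than citing them.
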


\begin{remark}
Here is an explanation of the context of Corollary \ref{cor_Wall}. In \cite{Wall1}, Wall defines Poincar\'{e} pairs as space pairs satisfying $``(\ref{def_Poincare_pair_1}) + (\ref{def_Poincare_pair_3})"$. In the second paragraph appearing of page 216, Wall writes: \textit{``We conjecture that the two requirements enclosed in square brackets are redundant; however, we will have to impose them."}
Here, Wall is referring to the requirements (i) the restriction $\mathcal{O}|_{Y}$ is the dualizing system of $Y$, and (ii) the class $\partial_{*} [X]$ is a fundamental class for $Y$. Note too that Wall is working in the finitely dominated context.
\end{remark}

Theorem \ref{thm_P_eqv} has an additional application.

\begin{bigcor}\label{cor_pair}
Under the assumptions of Theorem \ref{thm_P_eqv}, assume further \eqref{def_Poincare_pair_1} \textit{or} (\ref{def_Poincare_pair_2}). Suppose, for each component $Z$ of $Y$, every local system on $Z$ is the restriction of one on $X$. Then $(X, Y)$ is a Poincar\'{e} pair with  fundamental class $[X]$.
\end{bigcor}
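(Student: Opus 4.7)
The plan is to verify condition \eqref{def_Poincare_pair_3} via a five-lemma argument applied to the cohomology and homology long exact sequences of the pair $(X,Y)$, linked by cap products with $[X]$ and $\partial_*[X]$. By Theorem \ref{thm_P_eqv}, the hypothesis that \eqref{def_Poincare_pair_1} or \eqref{def_Poincare_pair_2} holds already yields both, so for every local system $\mathcal{E}$ on $X$ the cap products with $[X]$ furnish isomorphisms $H^m(X;\mathcal{E}) \cong H_{n-m}(X,Y;\mathcal{O}\otimes\mathcal{E})$ and $H^m(X,Y;\mathcal{E}) \cong H_{n-m}(X;\mathcal{O}\otimes\mathcal{E})$ for all $m$.

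Next, for such an $\mathcal{E}$, I would consider the ladder joining the cohomology long exact sequence of $(X,Y)$ with coefficients $\mathcal{E}$ to the homology long exact sequence with coefficients $\mathcal{O}\otimes\mathcal{E}$, with vertical arrows given by cap product. The maps emanating from $H^m(X;\mathcal{E})$ and from $H^m(X,Y;\mathcal{E})$ are cap products with $[X]$ and are isomorphisms by the previous paragraph, while the maps emanating from $H^m(Y;\mathcal{E}|_Y)$ are cap products with $\partial_*[X]$. Commutativity (up to the standard sign) is the classical compatibility of the cap product with the connecting homomorphism of a pair, and the argument extends to twisted coefficients without change. The five lemma then forces cap product with $\partial_*[X]$ to be an isomorphism $H^m(Y;\mathcal{E}|_Y) \cong H_{n-1-m}(Y;\mathcal{O}|_Y \otimes \mathcal{E}|_Y)$ for every $m$ and every local system $\mathcal{E}$ on $X$.

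To handle an arbitrary local system $\mathcal{G}$ on $Y$ (not a priori the restriction of one on $X$), I would decompose $Y = \bigsqcup_i Z_i$ into path components. Since cohomology, homology, and cap product with $\partial_*[X]$ all split as direct sums over the $Z_i$, it suffices to verify for each $i$ that cap product with the $Z_i$-component of $\partial_*[X]$ induces an isomorphism $H^m(Z_i;\mathcal{G}|_{Z_i}) \cong H_{n-1-m}(Z_i;\mathcal{O}|_{Z_i}\otimes \mathcal{G}|_{Z_i})$. The hypothesis of the corollary furnishes a local system $\tilde{\mathcal{G}}$ on $X$ with $\tilde{\mathcal{G}}|_{Z_i} = \mathcal{G}|_{Z_i}$; applying the preceding paragraph with $\mathcal{E} = \tilde{\mathcal{G}}$ and projecting onto the $Z_i$-summand yields exactly the required isomorphism.

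The chief technical point is setting up the ladder of long exact sequences so that it commutes (on the nose or up to a uniform sign) for twisted coefficients; this is standard, and can be carried out by treating cap product at the chain level and invoking the snake-lemma description of the connecting homomorphism. The remaining novelty is the component-by-component reduction in the last paragraph, which is what allows the hypothesis on extensions of local systems — stated only for each component separately — to suffice.
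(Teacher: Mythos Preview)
Your proposal is correct and follows essentially the same approach as the paper: the paper invokes Theorem \ref{thm_P_eqv} together with Lemma \ref{lem_boundary1} (whose proof is precisely your five-lemma ladder argument), and the component-by-component reduction you spell out is the implicit step that bridges Lemma \ref{lem_boundary1}'s conclusion (for local systems restricted from $X$) to condition \eqref{def_Poincare_pair_3} (for arbitrary local systems on $Y$).
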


Our other results rely heavily on the following Theorem \ref{thm_P_crt} which describes Poincar\'{e} pairs homotopy theoretically. We recall the concept of a Poincar\'{e} triad (compare with \cite[p.\ 216]{Wall1} and \cite[def.~C.5]{Goodwillie_Klein}).

We say $(X; Y_{1}, Y_{2})$ is a space triad (or 3-ad) if $Y_{1}$ and $Y_{2}$ are subspaces of $X$. We say it is (path)-connected if $X$ is.

\begin{definition}\label{def_Poincare_triad}
We say a space triad $(X; Y_{1}, Y_{2})$ is a \textit{Poincar\'{e} triad} of formal dimension $n$ if it satisfies the following conditions:
\begin{itemize}
\item $(X, Y_{1} \cup Y_{2})$ is a Poincar\'{e} pair of formal dimension $n$;
\item $\{ Y_{1}, Y_{2} \}$ is an excisive couple (cf. Definition \ref{def_excisive_couple});
\item  If $Y_{1}$ (resp.~$Y_{2}$) is nonempty, then $(Y_{1}, Y_{1} \cap Y_{2})$ (resp.~$(Y_{2}, Y_{1} \cap Y_{2})$) is a Poincar\'{e} pair of formal dimension $n-1$, and its fundamental classes are the restrictions (by excision) of those of $Y_{1} \cup Y_{2}$.
\end{itemize}
\end{definition}

In the above, $Y_{1}$ or $Y_{2}$ may be empty. If  both are empty, one obtains a Poincar\'{e} space. If one of them is empty, one obtains a Poincar\'{e} pair.

In the following Theorem \ref{thm_P_crt}, a Poincar\'e CW triad is a CW triad which is also a Poincar\'e triad.
Recall that a pair $(X,Y)$ is {\it $k$-connected} if for $j \le k$, every map $f\: (D^j,S^{j-1}) \to (X,Y)$ is homotopic rel $S^{j-1}$ to a map of $D^j$ into $Y$
(cf.~\cite[p.~70]{G.Whitehead}). When writing $F$ (or $F')$ we mean the homotopy fiber taken at any choice of basepoint, i.e., the fibers over all points of $X$ (or $Y_{2}$) should have the homotopy type $(k-1)$-spheres. If $Y_{2} = \emptyset$, the assertions for $F'$ will be considered vacuously true. When $Y_2$ is non-empty then we assume that the basepoints used in defining $F$ and $F'$ coincide. Note also that $Y_{2}$ may be disconnected.

\begin{bigthm}\label{thm_P_crt}
Suppose $(X; Y_{1}, Y_{2})$ is a Poincar\'{e} CW triad of formal dimension $n+k$ such that $X$ is connected and $Y_{1} \neq \emptyset$. Set $Y_0 := Y_{1} \cap Y_{2}$. Suppose $(X, Y_{1})$ and $(Y_{2}, Y_{0})$ are $2$-connected. Denote by $F$ (resp.~$F'$) a homotopy fiber of $Y_{1} \to X$ (resp.~$Y_{0} \to Y_{2}$). Then the following statements hold.
\begin{enumerate}
\item If $F$ is a homotopy $(k-1)$-sphere, then both \eqref{def_Poincare_pair_1} and \eqref{def_Poincare_pair_2} hold for the pair $(X,Y_2)$
 for some orientation system $\mathcal{O}$. Conversely, if  \eqref{def_Poincare_pair_1} \textit{or} \eqref{def_Poincare_pair_2} holds, then $F$ is a homotopy $(k-1)$-sphere.

\item $(X, Y_{2})$ is a Poincar\'{e} pair of formal dimension $n$ if and only if both $F$ and $F'$ are homotopy $(k-1)$-spheres.

\item If $(X, Y_{2})$ is a Poincar\'{e} pair of formal dimension $n$ and $Y_{2} \neq \emptyset$, then the natural inclusion $F' \to F$ is a homotopy equivalence.
\end{enumerate}
\end{bigthm}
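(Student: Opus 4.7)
My plan is to exploit the fact that a $2$-connected inclusion $A \hookrightarrow B$ whose homotopy fibre is a homotopy $(k-1)$-sphere is, up to homotopy, a spherical fibration with fibre $S^{k-1}$; such an inclusion carries a Thom class $U \in H^{k}(B,A;\mathcal{T})$ for a suitable orientation system $\mathcal{T}$, and cup product with $U$ yields a Thom isomorphism $H^{*}(B;\mathcal{B}) \overset{\cong}{\longrightarrow} H^{*+k}(B,A;\mathcal{T}\otimes \mathcal{B})$ for every local system $\mathcal{B}$. Combined with the triad Lefschetz duality $[X]_{n+k} \cap (-)$ on $(X, Y_{1} \cup Y_{2})$, this lets one ``shift down by $k$'' and produce duality on $(X, Y_{2})$ in formal dimension $n$, with prospective fundamental class $[X]_{n} := U \cap [X]_{n+k} \in H_{n}(X, Y_{2}; \mathcal{O}_{\mathrm{triad}} \otimes \mathcal{T})$.

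For part (1), I would treat the forward direction first: assuming $F \simeq S^{k-1}$, the Thom isomorphism for $Y_{1} \to X$ is available, and the required duality isomorphisms \eqref{def_Poincare_pair_1} and \eqref{def_Poincare_pair_2} for $(X, Y_{2})$ follow by composing Thom with the appropriate triad cap-product dualities, via the identity $(U \cup \alpha) \cap [X]_{n+k} = \alpha \cap (U \cap [X]_{n+k})$. For the converse, assume \eqref{def_Poincare_pair_1} (the case of \eqref{def_Poincare_pair_2} is parallel) holds for $(X, Y_{2})$ in formal dimension $n$; combining it with the triad Lefschetz duality $H^{*}(X, Y_{1}; \mathcal{B}) \cong H_{n+k-*}(X, Y_{2}; \mathcal{O}_{\mathrm{triad}} \otimes \mathcal{B})$ eliminates $H_{*}(X, Y_{2}; -)$ and yields a natural isomorphism $H^{*}(X, Y_{1}; \mathcal{B}) \cong H^{*-k}(X; \mathcal{O}' \otimes \mathcal{B})$ for a computable orientation system $\mathcal{O}'$. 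Inserting this into the long exact sequence of $(X, Y_{1})$ produces a Gysin-type sequence; examining the Serre spectral sequence of $F \to Y_{1} \to X$ (where $F$ is simply connected by the $2$-connectedness of $(X, Y_{1})$) then forces the cohomology of $F$ to agree with that of $S^{k-1}$, and Hurewicz together with Whitehead give $F \simeq S^{k-1}$.

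For part (2), I would apply part (1) twice: first to the ambient triad $(X; Y_{1}, Y_{2})$, giving the duality isomorphisms for $(X, Y_{2})$ in formal dimension $n$ iff $F \simeq S^{k-1}$; and then to the auxiliary triad $(Y_{2}; Y_{0}, \emptyset)$, which is a Poincar\'e triad of formal dimension $n+k-1$ by the hypothesis on $(X; Y_{1}, Y_{2})$ and for which the analogue of $F$ is precisely $F'$. The non-trivial compatibility is that $\partial_{*}[X]_{n}$ is indeed a fundamental class for $Y_{2}$, which I would verify from the boundary formula $\partial_{*}(U \cap [X]_{n+k}) = \pm (U|_{Y_{2}}) \cap \partial_{*}[X]_{n+k}$ together with the fact that $U|_{Y_{2}}$ is the Thom class of the boundary spherical fibration $Y_{0} \to Y_{2}$ once this fibration is known to be $(k-1)$-spherical. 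For part (3), assuming both $F$ and $F'$ are homotopy $(k-1)$-spheres, I would argue via uniqueness of Thom classes that $U|_{Y_{2}}$ must coincide up to sign with the intrinsic Thom class of $Y_{0} \to Y_{2}$; this forces the induced map $F' \to F$ to have degree $\pm 1$ and hence to be a homotopy equivalence (for $k \geq 2$ by Whitehead, and for $k=1$ by a direct check on the two-point sets).

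The main obstacle I anticipate is the converse direction of (1): rigorously deducing $F \simeq S^{k-1}$ from a cohomological ``Thom-type'' isomorphism requires a recognition principle for spherical fibrations formulated with local coefficients, and care is needed to keep track of the various orientation systems $\mathcal{T}$, $\mathcal{O}_{\mathrm{triad}}$ and $\mathcal{O}'$ and their compatibilities under restriction to $Y_{2}$ and $Y_{0}$. Pinning down these twists, along with the sign in the boundary formula used in (2) and the naturality of Thom classes used in (3), is where the argument will require the most attention.
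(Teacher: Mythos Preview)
Your overall strategy coincides with the paper's: the two pillars are (i) the recognition principle that a $2$-connected pair $(B,A)$ satisfies a Thom isomorphism of formal dimension $k$ if and only if the homotopy fibre of $A\hookrightarrow B$ is a homotopy $(k-1)$-sphere (the paper's Theorem~\ref{thm_T_ext}), and (ii) the dictionary between Thom classes and fundamental classes obtained by composing $u\cup(-)$ with the triad cap product $\eta\cap(-)$ (the paper's Proposition~\ref{prop_dual_Thom}). Your treatment of (1), including the converse via a Gysin/Serre argument, matches the paper's route.

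There is, however, a circularity in your forward direction of (2). You propose to certify $\partial_*[X]_n$ as a fundamental class of $Y_2$ via the boundary formula together with the assertion that $U|_{Y_2}$ is \emph{the} Thom class of $(Y_2,Y_0)$ ``once this fibration is known to be $(k-1)$-spherical''. But knowing $F'\simeq S^{k-1}$ only tells you that $(Y_2,Y_0)$ carries \emph{some} Thom class $U'$; it does not identify the restricted class $U|_{Y_2}$ with $U'$. Indeed, $U|_{Y_2}=\pm U'$ is equivalent to $F'\to F$ having degree $\pm 1$, which is precisely the content of (3), so your (2) tacitly presupposes (3). The paper severs this loop differently: from $F'\simeq S^{k-1}$ and part (1) applied to the triad $(Y_2;Y_0,\emptyset)$ one concludes only that $Y_2$ is a Poincar\'e space of formal dimension $n-1$ for \emph{some} dualizing data; then Lemma~\ref{lem_boundary2} (resting on Lemma~\ref{lem_dual_orientation}) forces $\mathcal{O}|_{Y_2}$ and $\partial_*[X]_n$ to be the correct data, finishing (2) without ever comparing $U|_{Y_2}$ to $U'$.

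For (3) your idea is right, but ``uniqueness of Thom classes'' only compares two classes already known to be Thom classes; you must first show $U|_{Y_2}$ is one. The paper does this by running the dictionary (ii) in reverse: under the hypothesis of (3) the class $\partial_*[X]_n=(\partial\eta)|_{Y_2}\cap(U|_{Y_2})$ is a fundamental class of $Y_2$, and since $(\partial\eta)|_{Y_2}$ is a fundamental class of the pair $(Y_2,Y_0)$, Proposition~\ref{prop_dual_Thom} yields that $U|_{Y_2}$ is a Thom class. The fibrewise-generator property of Thom classes (Lemma~\ref{lem_fiber_Thom}) then gives the degree~$\pm 1$ conclusion you want. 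Reorganising (2) and supplying this missing step in (3) will make your argument complete and essentially identical to the paper's.
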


\begin{remark}
Spivak formulated a statement \cite[prop.~4.6]{Spivak} similar to Part (2) of Theorem \ref{thm_P_crt} (cf.~\cite[thm.~I.4.2]{Browder2}). However, there is a major difference between Spivak's result and ours in that {\it Spivak ignored the homotopy fiber $F'$}. That is, Spivak asserted that $(X, Y_{2})$ is a Poincar\'{e} pair if and only if $F$ itself is a homotopy sphere. The distinction originates from the fact that Spivak's notion of Poincar\'{e} pair is different from ours (and Wall's). The issue will be explained in Section \ref{sec_history}.

We wish to emphasize here that by Theorem \ref{thm_interior_duality}, one has to take $F'$ into account if one uses our definition. Otherwise, the conclusion of Theorem \ref{thm_P_crt} will be no longer true (see Remark \ref{rmk_P_crt}).
\end{remark}

Theorem \ref{thm_P_crt} has several applications which we now develop.  It is well-known that if $(X_{1}, Y)$ and $(X_{2}, Y)$ are Poincar\'{e} pairs of formal dimension $n$, then the amalgamation $X_{1} \cup_{Y} X_{2}$ is a Poincar\'{e} space of formal dimension $n$ (see \cite[thm.~2.1, thm.~2.1~add.]{Wall1}, \cite[prop.~C.6]{Goodwillie_Klein} and Lemma \ref{lem_gluing} in this paper). One may ask about  the extent to which the following partial converse is true:  suppose that $X_{1} \cup_{Y} X_{2}$ is a Poincar\'{e} space of formal dimension $n$ and $Y$ is a Poincar\'{e} space of formal dimension $n-1$. Then are $(X_{1}, Y)$ and $(X_{2}, Y)$ Poincar\'{e} pairs of formal dimension $n$? In general the answer is no. Here is an easy counterexample: let $X_{1} = S^{1}$, $X_{2} = [0,1]$ and
\[
X_{1} \cup_{Y} X_{2} = X_{1} \vee X_{2},
\]
where $Y$ is the wedge point.

However, a special case of the following Theorem \ref{thm_doubling} implies that, roughly speaking, the answer is affirmative if the amalgamation is a double.

\begin{bigthm}[Doubling]\label{thm_doubling}
Suppose $(X; Y_{1}, Y_{2})$ is a finitely dominated triad (cf. Definition \ref{def_finite_dominated}) with $Y_{2} \neq \emptyset$ and $Y_{0} = Y_{1} \cap Y_{2}$. Suppose further $(Y_{2}, Y_{0})$ is a Poincar\'{e} pair of formal dimension $n-1$. Then $(X \cup_{Y_{2}} X, Y_{1} \cup_{Y_{0}} Y_{1})$ is a Poincar\'{e} pair of formal dimension $n$ if and only if $(X; Y_{1}, Y_{2})$ is Poincar\'{e} triad of formal dimension $n$.
\end{bigthm}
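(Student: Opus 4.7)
The forward direction $(\Rightarrow)$ is a direct application of the gluing lemma (Lemma~\ref{lem_gluing}): I amalgamate two copies of the Poincar\'{e} triad $(X; Y_{1}, Y_{2})$ along their common face $(Y_{2}, Y_{0})$, which is a Poincar\'{e} pair of formal dimension $n-1$ by the triad axioms. The cofibration hypotheses guarantee that the strict and homotopy pushouts coincide, so the lemma yields $(D, \partial D) := (X \cup_{Y_{2}} X,\, Y_{1} \cup_{Y_{0}} Y_{1})$ as a Poincar\'{e} pair of formal dimension $n$.

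For the reverse direction $(\Leftarrow)$, write $X^{\pm}$ and $Y_{1}^{\pm}$ for the two copies inside $D$. I must verify that $(X, Y_{1} \cup Y_{2})$ is a Poincar\'{e} pair of dimension $n$, that $(Y_{1}, Y_{0})$ is a Poincar\'{e} pair of dimension $n-1$, and that the fundamental classes are compatible; the excisiveness of $\{Y_{1}, Y_{2}\}$ is automatic from the cofibration hypotheses. Letting $[D]$ denote the fundamental class of $(D, \partial D)$, I define a candidate
\[
[X] \in H_{n}(X^{-},\, Y_{1}^{-} \cup Y_{2};\, \mathcal{O}_{X})
\]
as the image of $[D]$ under
\[
H_{n}(D, \partial D) \longrightarrow H_{n}(D,\, X^{+} \cup Y_{1}^{-}) \xrightarrow[\cong]{\mathrm{excision}} H_{n}(X^{-},\, Y_{1}^{-} \cup Y_{2}),
\]
and analogously define $[Y_{1}]$ from $\partial_{*}[D]$ via the doubled-boundary decomposition of $\partial D$.

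By Theorem~\ref{thm_P_eqv}, to verify the Poincar\'{e} pair structure it suffices to check only one of \eqref{def_Poincare_pair_1} or \eqref{def_Poincare_pair_2} for each candidate. The key tool is a commutative ladder whose top row is the Mayer-Vietoris cohomology sequence for the pushout $D = X^{+} \cup_{Y_{2}} X^{-}$ (with twisted coefficients) and whose bottom row is the long exact sequence of the triple $(D,\, \partial D \cup Y_{2},\, \partial D)$, into which I substitute the splitting $H_{*}(D, \partial D \cup Y_{2}) \cong H_{*}(X^{+}, Y_{1}^{+} \cup Y_{2}) \oplus H_{*}(X^{-}, Y_{1}^{-} \cup Y_{2})$ obtained by excision. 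The vertical maps are cap products with $[D]$, $[X^{\pm}]$, and $[Y_{2}]$; the rungs corresponding to $[D]$ and $[Y_{2}]$ are isomorphisms by the hypothesized Poincar\'{e} structures, so the five-lemma forces the middle rungs, cap with $[X^{\pm}]$, to be isomorphisms as well. An analogous argument one dimension lower, applied to the double $\partial D = Y_{1}^{+} \cup_{Y_{0}} Y_{1}^{-}$ with fundamental class $\partial_{*}[D]$ and using that $Y_{0}$ is a Poincar\'{e} space (as the boundary of the Poincar\'{e} pair $(Y_{2}, Y_{0})$), yields $(Y_{1}, Y_{0})$ as a Poincar\'{e} pair of dimension $n-1$. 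The principal obstacle will be the careful bookkeeping required to ensure the ladder commutes (signs in Mayer-Vietoris boundary maps and in cap products) and that the resulting candidate classes are mutually compatible — in particular, that $\partial_{*}[X]$ restricts on $Y_{2}$ to the given fundamental class of $(Y_{2}, Y_{0})$, as demanded by Definition~\ref{def_Poincare_triad}.
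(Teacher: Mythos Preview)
Your direction labels are reversed: the gluing lemma gives the implication ``triad $\Rightarrow$ double is a pair,'' which is the $\Leftarrow$ direction of the theorem as stated. This is cosmetic, but the substantive issue lies in your treatment of the hard direction.

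Your Mayer--Vietoris ladder is a natural thing to try, and it is genuinely different from the paper's approach. But there is a real gap, and it is not merely bookkeeping. For the ladder to commute, the class you call $[Y_2]$ is forced to be $\partial_*[X^+]\big|_{(Y_2,Y_0)}$ --- a class manufactured from $[D]$, not the fundamental class given by the hypothesis that $(Y_2,Y_0)$ is a Poincar\'e pair. Your five-lemma step requires $\cap [Y_2]$ to be an isomorphism, which means you need $\partial_*[X^+]\big|_{(Y_2,Y_0)}$ to \emph{coincide} with the given fundamental class (up to sign and an identification of dualizing systems). You assert this holds ``by the hypothesized Poincar\'e structures,'' but that is precisely what is in question: a priori this class could fail to generate $H_{n-1}(Y_2,Y_0;\mathcal O|_{Y_2})$, or $\mathcal O|_{Y_2}$ could fail to be the dualizing system of $(Y_2,Y_0)$. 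The same circularity recurs in your boundary argument for $(Y_1,Y_0)$: the class $[Y_0]$ in that ladder must be $\partial_*[Y_1^+]$, not $\partial_*$ of the given fundamental class of $(Y_2,Y_0)$. This difficulty is exactly why the converse to gluing is delicate --- recall the wedge-point counterexample in the introduction, and Example~\ref{exm_double}, which shows the conclusion fails without the Poincar\'e hypothesis on $(Y_2,Y_0)$; that hypothesis must therefore enter the argument in a way more substantial than simply supplying an isomorphism in a rung whose class you have not controlled.

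The paper avoids this entirely. After reducing to a finite CW $4$-ad (Lemmas~\ref{lem_rd_space}, \ref{lem_rd_dual}), it treats the absolute case $Y_1=\emptyset$ by thickening $(X,Y_2)$ to a manifold triad $(M_+;N_1,N_2)$ in a half-space, doubling to a manifold $(M,\partial M)$, and invoking the homotopy-fiber criterion Theorem~\ref{thm_P_crt}: the homotopy fibers of $\partial M \to M$ and $N_0 \to N_2$ are spheres. The fold retraction $(M,\partial M)\to (M_+,N_1)$ induces a retraction on homotopy fibers, so the fiber $F_+$ for $(M_+,N_1)$ is a retract of a sphere, hence contractible or a sphere; contractibility is then excluded by a short homology argument. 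The relative case $Y_1\neq\emptyset$ is reduced to the absolute one by first applying the absolute case to the doubled boundary $Y_1\cup_{Y_0}Y_1$, then filling in $Y_1$ by a collar to obtain $X\cup_Y X$ (with $Y = Y_1\cup_{Y_0}Y_2$) as a Poincar\'e space, and applying the absolute case once more.
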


The merit of Theorem \ref{thm_doubling} lies in the reduction of a Poincar\'{e} triad (resp.~pair) to a Poincar\'{e} pair (resp.~space).
It will be used to prove the following Theorem \ref{thm_fibration} which was the original motivation for this paper.

\begin{bigthm}[Fibration]\label{thm_fibration}
Suppose $(B, \partial B)$ and $(F, \partial F)$ are finitely dominated pairs such that $B$ is connected and based. Let
\[
(F, \partial F) \rightarrow (E, \partial_{1} E) \rightarrow B
\]
be a pair of fibrations over $B$, where $\partial_{1} E \rightarrow E$ is a closed cofibration.  Let $\partial_{2} E = E|_{\partial B}$ denote the pullback of $E \rightarrow B$ induced by $\partial B \subset B$. Assume further $(E; \partial_{1} E, \partial_{2} E)$ is a finitely dominated triad.

Then the following statements hold:

\begin{enumerate}
\item The triad $(E; \partial_{1} E, \partial_{2} E)$ is a Poincar\'{e} triad if and only if
$(B, \partial B)$ and $(F, \partial F)$ are Poincar\'{e} pairs.

\item If $(E; \partial_{1} E, \partial_{2} E)$ is a Poincar\'{e} triad, then its formal dimension is the sum of those of $(B, \partial B)$ and $(F, \partial F)$;
\end{enumerate}
\end{bigthm}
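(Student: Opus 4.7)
The plan is to reduce Theorem~\ref{thm_fibration} to the absolute fibration theorem of Gottlieb and Klein by iterated application of the Doubling Theorem (Theorem~\ref{thm_doubling}). The key observation is that doubling $E$ along $\partial_2 E$ produces a fibration $DE := E \cup_{\partial_2 E} E \to DB := B \cup_{\partial B} B$ with the same pair fiber $(F, \partial F)$, whose only remaining ``boundary'' is $D\partial_1 E := \partial_1 E \cup_{\partial_0 E} \partial_1 E$. This converts a triad question over a Poincar\'e pair base into a pair question over a Poincar\'e space base, which can then be reduced to the absolute case by doubling once more along $\partial_1 E$.

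My first step is an intermediate case: \emph{if $B$ is a Poincar\'e space with $\partial B = \emptyset$, then $(E,\partial_1 E)$ is a Poincar\'e pair if and only if $(F,\partial F)$ is.} For the forward direction, since $(F,\partial F)$ is Poincar\'e, the spaces $\partial F$ and $DF := F \cup_{\partial F} F$ are Poincar\'e; applying the absolute fibration theorem to $\partial F \to \partial_1 E \to B$ and to $DF \to DE_1 \to B$ (with $DE_1 := E \cup_{\partial_1 E} E$) shows that $\partial_1 E$ and $DE_1$ are Poincar\'e spaces; Theorem~\ref{thm_doubling} applied to the triad $(E;\emptyset,\partial_1 E)$ then promotes this to $(E,\partial_1 E)$ being a Poincar\'e pair. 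The converse runs the same arrows backward, using the absolute fibration theorem to transfer duality from total space to fiber.

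To prove the full theorem, apply the intermediate case twice. Given $(B,\partial B)$ and $(F,\partial F)$ Poincar\'e pairs: $\partial B$ is Poincar\'e, so the intermediate case on the restriction $(F,\partial F) \to (\partial_2 E,\partial_0 E) \to \partial B$ gives that $(\partial_2 E,\partial_0 E)$ is a Poincar\'e pair; Theorem~\ref{thm_doubling} then produces a Poincar\'e space $DB$, and the intermediate case on $(F,\partial F) \to (DE,D\partial_1 E) \to DB$ gives that $(DE,D\partial_1 E)$ is a Poincar\'e pair; finally the reverse direction of Theorem~\ref{thm_doubling} on $(E;\partial_1 E,\partial_2 E)$ produces the desired Poincar\'e triad. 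For the converse, extract $(\partial_2 E,\partial_0 E)$ as a Poincar\'e pair from the triad definition, double to obtain $(DE,D\partial_1 E)$ a Poincar\'e pair, and apply the intermediate case's converse to conclude both $(F,\partial F)$ is a Poincar\'e pair and $DB$ is a Poincar\'e space. Separately verify that $\partial B$ is Poincar\'e by applying the absolute fibration theorem to $DF \to D\partial_2 E \to \partial B$ (noting that $D\partial_2 E$ is Poincar\'e since it is the double of the Poincar\'e pair $(\partial_2 E,\partial_0 E)$); then Theorem~\ref{thm_doubling} yields $(B,\partial B)$ a Poincar\'e pair. The dimension statement in part (2) tracks automatically, since doubling preserves formal dimension and the absolute fibration theorem is additive on base and fiber dimensions.

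The main obstacle is not the logical skeleton above but the bookkeeping needed to ensure that the hypotheses of Theorem~\ref{thm_doubling} and of the absolute fibration theorem are preserved at each stage: closed cofibrations must persist under pushout and restriction, all pulled-back and doubled spaces must remain finitely dominated and connected, $\{\partial_1 E,\partial_2 E\}$ must form an excisive couple in $E$, the pushout $DE_1 \to B$ must genuinely be a fibration (this uses the closed cofibration hypothesis on $\partial_1 E \to E$), and the orientation systems and fundamental classes constructed along the way must fit together compatibly under the various gluing and excision identifications. Separate care is also needed for the degenerate subcases in which $\partial B$ or $\partial F$ is empty, where the non-emptiness hypothesis on $Y_2$ in Theorem~\ref{thm_doubling} must be sidestepped (the corresponding doubling step is trivial in those cases).
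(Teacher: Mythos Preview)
Your proposal is correct and follows essentially the same strategy as the paper: reduce to the absolute fibration theorem by two applications of the Doubling Theorem~\ref{thm_doubling}. The only difference is the order of the two doublings—you first double along $\partial_1 E$ (the fiber direction) to handle the case $\partial B=\emptyset$, then along $\partial_2 E$ (the base direction) for the general case, while the paper does these in the opposite order; either works. One small wrinkle: your intermediate case as stated \emph{assumes} $B$ is a Poincar\'e space, yet in the full converse you invoke it to \emph{conclude} that $DB$ is Poincar\'e. The fix is simply to state the intermediate case without that hypothesis, as ``$(E,\partial_1 E)$ is a Poincar\'e pair if and only if $B$ is a Poincar\'e space and $(F,\partial F)$ is a Poincar\'e pair''; your own argument (via the absolute theorem applied to $\partial F\to\partial_1 E\to B$ or to $F\to E\to B$ when $\partial F=\emptyset$) already proves this stronger form. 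The paper also carries out explicitly the reduction to connected $F$ by passing to a suitable finite cover of $B$ and invoking Theorem~\ref{thm_covering}, a step you correctly flag as part of the bookkeeping.
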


A special case of Theorem \ref{thm_fibration} is when $\partial B = \emptyset$ and $\partial F = \emptyset$. We call this case the absolute version of the theorem. The absolute version was announced by Quinn and proved in \cite{Gottlieb} and \cite[cor.~F]{Klein2} by different methods. Another special case of Theorem \ref{thm_fibration} is the product of space pairs. Thus it also generalizes  Theorem 2.5 in \cite{Wall1}. In \cite{Wall1}, Wall proved that if $(B, \partial B)$ and $(F, \partial F)$ are finitely dominated Poincar\'{e} pairs, then so is the pair $(B \times F, B \times \partial F \cup \partial B \times F)$. However, Wall neither stated nor proved the converse. The proof of Theorem \ref{thm_fibration} will be obtained by a reduction to the absolute version by virtue of Theorem \ref{thm_doubling}.

Lastly, we also prove the following result on finite coverings.
\begin{bigthm}[Finite Covering]\label{thm_covering}
Suppose $(\bar{X}, \bar{Y})$ is a finite covering space of $(X, Y)$. Then the following statements hold:

\begin{enumerate}
\item If $(X, Y)$ is a Poincar\'{e} pair of formal dimension $n$, then so is $(\bar{X}, \bar{Y})$ and a fundamental class for $(\bar{X}, \bar{Y})$ is given by the transfer of a fundamental class for $(X, Y)$.

\item Assume that $(X, Y)$ is a finitely dominated pair. If $(\bar{X}, \bar{Y})$ is a Poincar\'{e} pair of formal dimension $n$, then so is $(X, Y)$.
\end{enumerate}
\end{bigthm}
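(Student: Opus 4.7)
The plan for part (1) is the classical transfer argument for finite coverings. Write $p\colon (\bar X,\bar Y)\to (X,Y)$ for the degree-$d$ covering and set $\bar{\mathcal O}:=p^{*}\mathcal O$ and $[\bar X]:=p^{!}[X]$, where $p^{!}$ denotes the singular-chain transfer. For any local system $\bar{\mathcal B}$ on $\bar X$, the direct image $p_{*}\bar{\mathcal B}$ is a local system on $X$, and the Shapiro-type isomorphisms $H^{*}(\bar X;\bar{\mathcal B})\cong H^{*}(X;p_{*}\bar{\mathcal B})$ and $H_{*}(\bar X;\bar{\mathcal B})\cong H_{*}(X;p_{*}\bar{\mathcal B})$ (together with their relative analogs) hold. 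Combined with the projection formula $\mathcal O\otimes p_{*}\bar{\mathcal B}\cong p_{*}(\bar{\mathcal O}\otimes\bar{\mathcal B})$, these identifications transport cap product with $[X]$ on $X$ to cap product with $[\bar X]$ on $\bar X$, yielding \eqref{def_Poincare_pair_1} and \eqref{def_Poincare_pair_2} for $(\bar X,\bar Y)$ from those for $(X,Y)$. The boundary condition \eqref{def_Poincare_pair_3} transfers by the same reasoning applied to the finite cover $\bar Y\to Y$, after verifying the naturality identity $\partial_{*}p^{!}=p^{!}\partial_{*}$.

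For part (2), the plan is to reverse this chain of reasoning, leveraging finite dominatedness. Write $G:=\pi_{1}(X)$, $H:=\pi_{1}(\bar X)\le G$, and $d:=[G:H]$. The first task is to manufacture a rank-one orientation system $\mathcal O$ on $X$ with $p^{*}\mathcal O\cong\bar{\mathcal O}$, together with a class $[X]\in H_{n}(X,Y;\mathcal O)$ satisfying $p^{!}[X]=[\bar X]$. My approach would be to first reduce to the case that $p$ is a \emph{normal} cover: choose a finite normal cover $\hat X\to X$ factoring through $\bar X$, and note that by part (1) the space $\hat X$ is Poincar\'e, so one may replace $\bar X$ by $\hat X$ and assume the deck group $\Gamma:=G/H$ acts freely on $\bar X$. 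The dualizing data on $\bar X$ is intrinsically determined by the duality isomorphisms and is therefore $\Gamma$-equivariant (up to canonical sign), so a Galois-descent argument produces the desired pair $(\mathcal O,[X])$ on $X$.

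With $(\mathcal O,[X])$ in hand, I invoke the finite-dominated form of Wall's criterion recalled in the Introduction: it suffices to verify \eqref{def_Poincare_pair_1} and \eqref{def_Poincare_pair_2} with the single local system $\mathbf{\Lambda}=\mathbb Z[G]$. As $p^{*}\mathbf{\Lambda}$ is a free $\mathbb Z[H]$-module of rank $d$, Shapiro's isomorphism identifies the $\mathbf{\Lambda}$-twisted cohomology and homology of $X$ with those of $\bar X$ with $p^{*}\mathbf{\Lambda}$ coefficients; under these identifications, cap product with $[X]$ corresponds to cap product with $[\bar X]$, which is an isomorphism by hypothesis on $(\bar X,\bar Y)$. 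The boundary condition \eqref{def_Poincare_pair_3} for $(X,Y)$ is handled analogously, applying the same argument to the finite cover $\bar Y\to Y$ and using that $Y$ is itself finitely dominated. I expect the principal obstacle to be the Galois-descent step producing $(\mathcal O,[X])$ on $X$; once this is in place, the remaining verifications are formal manipulations of the transfer, the projection formula, and Shapiro's lemma.
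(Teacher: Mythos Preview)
Your treatment of part~(1) matches the paper's: both use the identifications $C_\bullet(\bar X;\mathcal B)\cong C_\bullet(X;p_!\mathcal B)$ and $C^\bullet(\bar X;\mathcal B)\cong C^\bullet(X;p_*\mathcal B)$ (with $p_!=p_*$ for a finite cover) to transport cap product with the transferred cycle to cap product with the original one.

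For part~(2), however, your route diverges from the paper's, and the divergence matters. The paper does \emph{not} attempt to construct $(\mathcal O,[X])$ directly. Instead it reduces (via Lemmas~\ref{lem_rd_space} and~\ref{lem_rd_dual}) to a finite CW pair, takes a smooth thickening to a compact manifold triad $(M;N_1,N_2)$ with $(M,N_2)\simeq(X,Y)$, and invokes the Criterion, Theorem~\ref{thm_P_crt}(2): $(M,N_2)$ is a Poincar\'e pair of formal dimension $n$ if and only if the homotopy fibers of $N_1\to M$ and of $N_0\to N_2$ are spheres of the appropriate dimension. Since a finite cover leaves homotopy fibers unchanged, the Poincar\'e property for $(\bar M,\bar N_2)\simeq(\bar X,\bar Y)$ immediately yields it for $(M,N_2)$. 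The orientation system and fundamental class on $X$ then come for free from the Thom class of $(M,N_1)$ via Proposition~\ref{prop_dual_Thom}; they are never produced by descent.

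Your descent strategy has a genuine gap precisely at the point you flag. Descending $\bar{\mathcal O}$ to some $\mathcal O$ on $X$ can indeed be carried out after passing to a normal cover (the fundamental class pins down the sign in each $\gamma^*\bar{\mathcal O}\cong\bar{\mathcal O}$, forcing the cocycle condition). But producing a class $[X]\in H_n(X,Y;\mathcal O)$ with $p^![X]=[\bar X]$ is not formal. From $p^!p_*=\sum_\gamma\gamma_*=d\cdot\mathrm{id}$ on $H_n(\bar X,\bar Y;\bar{\mathcal O})\cong\mathbb Z$ one obtains only $p^!(p_*[\bar X])=d\cdot[\bar X]$; you would need $p_*[\bar X]$ to be divisible by $d$ in $H_n(X,Y;\mathcal O)$, and nothing short of already knowing $H_n(X,Y;\mathcal O)\cong\mathbb Z$ seems to supply this. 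In Cartan--Leray terms, surjectivity of the edge map $p^!\colon H_n(X,Y;\mathcal O)\to H_n(\bar X,\bar Y;\bar{\mathcal O})^\Gamma$ is obstructed by differentials $d_r\colon E^r_{r,\,n-r+1}\to E^r_{0,n}$ over which you have no a priori control. The Shapiro-type verifications that follow are, as you say, formal once $p^![X]=[\bar X]$ is in hand --- but the paper's geometric approach sidesteps this obstacle entirely.
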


Some remarks about Part (2) of Theorem \ref{thm_covering} are in order. Clearly, the result is a special case of Theorem \ref{thm_fibration}. However, our proof of Theorem \ref{thm_fibration} uses Theorem \ref{thm_covering}. When $Y = \emptyset$, Part (2) of Theorem \ref{thm_covering} is well-known. However, the relative version appears to be new. In fact, our proof requires Theorem \ref{thm_P_crt}.

Note that the finite domination assumption in Part (2) is needed: the double cover $S^\infty$ of $\Bbb RP^\infty$ is contractible. Consequently, $S^\infty$ is a Poincar\'e space of dimension 0, but $\Bbb RP^\infty$ is not a Poincar\'e space.

\begin{out}
Section \ref{sec_preliminary} recalls some facts frequently used in the paper. In Section \ref{sec_Thom}, we prove and recall some foundational results on the Thom isomorphism. Theorems \ref{thm_interior_duality}, \ref{thm_P_eqv} and \ref{thm_P_crt}, and Corollaries \ref{cor_Wall} and \ref{cor_pair} are proved in Section \ref{sec_dual}. Theorems \ref{thm_doubling}, \ref{thm_covering} and \ref{thm_fibration} are proved in Sections \ref{sec_double}, \ref{sec_cover} and \ref{sec_fibration} respectively. Lastly, in Section \ref{sec_history}, we discuss the various definitions of Poincar\'{e} pairs.
\end{out}

\begin{acks} The authors are indebted to Bill Browder for explaining to us the genesis of Poincar\'e duality spaces. The first author is especially grateful to Bill for his inspiration and support. We thank an anonymous referee who meticulously read the paper and made many helpful suggestions which led to an improved presentation. The first author was partially supported by Simons Foundation Collaboration Grant 317496. The second author was partially supported by NSFC11871272. The third author was partially supported by NSFC11571343.
\end{acks}

%%%------------------------------------------------------------------------------------------------------------------------------------------------------
%%%------------------------------------------------------------------------------------------------------------------------------------------------------
%%%------------------------------------------------------------------------------------------------------------------------------------------------------
\section{Preliminaries}\label{sec_preliminary}
The purpose of this section is to recall some basic results that are
frequently used in the paper. In the sequel, $R$ denotes an associative ring with unit.

\begin{lemma}\label{lem_acyclic}
Assume $C_{\bullet}$ is a chain complex of right (resp. left) projective $R$-modules. Assume further that $C_{\bullet}$ is connective, i.e., $C_{p} =0$ for all $p<0$. Then following statements are equivalent:
\begin{enumerate}
\item $C_{\bullet}$ is contractible.

\item $C_{\bullet}$ is acyclic, i.e., the homology groups $H_{p} (C_{\bullet})$ vanish for all $p$.

\item The homology groups $H_{p} (C_{\bullet}; M) = 0$ vanish for all $p$ and all left (resp. right) $R$-modules $M$.

\item The cohomology groups $H^{p} (C_{\bullet}; M)$ vanish for all $p$ and all right (resp. left) $R$-modules $M$.
\end{enumerate}
\end{lemma}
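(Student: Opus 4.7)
The plan is to handle $(1) \Rightarrow (2), (3), (4)$ by pure functoriality, to close the loops $(3) \Rightarrow (2)$ and $(4) \Rightarrow (2)$ by specialization and iterated truncation, and then to do the real work in $(2) \Rightarrow (1)$ by inductively building a chain contraction from the splittings afforded by projectivity.

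For the easy directions, a chain contraction $s$ satisfies $ds + sd = \mathrm{id}_{C_\bullet}$, which forces $H_p(C_\bullet) = 0$ for all $p$, giving $(1) \Rightarrow (2)$. The same $s$ persists under the additive functors $- \otimes_R M$ and $\Hom_R(-, M)$, yielding $(1) \Rightarrow (3)$ and $(1) \Rightarrow (4)$. For $(3) \Rightarrow (2)$, specialize $M = R$. For $(4) \Rightarrow (2)$, I would first observe the identity $H^0(\Hom_R(C_\bullet, M)) = \Hom_R(H_0(C_\bullet), M)$ (no projectivity needed); choosing $M = H_0(C_\bullet)$ forces $H_0(C_\bullet) = 0$. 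With $H_0 = 0$, projectivity of $C_0$ splits $d_1$ as $s_0 \colon C_0 \to C_1$, so $Z_1 := \ker d_1$ is a summand of $C_1$ and hence projective, and the reshifted truncation $D_\bullet := (\cdots \to C_3 \to C_2 \to Z_1 \to 0)$ is again a connective complex of projectives. A degreewise-split short exact sequence $0 \to \tau_{\geq 1} C_\bullet \to C_\bullet \to Q_\bullet \to 0$, whose quotient $Q_\bullet = (C_0 \xrightarrow{\mathrm{id}} C_0)$ is contractible, transfers hypothesis (4) from $C_\bullet$ to $D_\bullet$; iterating forces $H_p(C_\bullet) = 0$ for every $p$.

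The core is $(2) \Rightarrow (1)$, which I would prove by inductive construction. From acyclicity, $d_1$ is surjective; projectivity of $C_0$ splits it as $s_0 \colon C_0 \to C_1$, decomposing $C_1 = Z_1 \oplus s_0(C_0)$ and exhibiting $Z_1 = \ker d_1$ as projective. By acyclicity $d_2 \colon C_2 \to Z_1$ is surjective, and projectivity of $Z_1$ yields a splitting $t_1$; set $s_1 = t_1$ on $Z_1$ and $s_1 = 0$ on $s_0(C_0)$. A direct check gives $d_2 s_1 + s_0 d_1 = \mathrm{id}_{C_1}$. Continuing: at each stage $p$, one has $C_p = Z_p \oplus s_{p-1}(Z_{p-1})$, $Z_p$ is projective as a direct summand, and $d_{p+1}$ surjects onto $Z_p$ by acyclicity and splits by projectivity, producing $s_p$. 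The principal obstacle is not depth but bookkeeping: one must verify carefully that the decomposition $C_p = Z_p \oplus s_{p-1}(Z_{p-1})$ interacts with the new splitting so that $d s_p + s_{p-1} d = \mathrm{id}_{C_p}$ on all of $C_p$.
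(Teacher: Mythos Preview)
Your proof is correct. The one substantive difference from the paper lies in the implication $(4)\Rightarrow(2)$. The paper invokes the Universal Coefficient Spectral Sequence, with $E_2^{p,q}=\mathrm{Ext}^q_R(H_p(C_\bullet),M)$ converging to $H^{p+q}(C_\bullet;M)$: the corner term $E_2^{0,0}=\Hom_R(H_0(C_\bullet),M)$ survives to $E_\infty$, forcing $H_0=0$, and then one walks up the $q=0$ row inductively. Your truncation argument reaches the same conclusion by a more elementary route: you extract the same base identity $H^0(\Hom_R(C_\bullet,M))=\Hom_R(H_0(C_\bullet),M)$ directly, then use the degreewise-split sequence $0\to\tau_{\ge1}C_\bullet\to C_\bullet\to Q_\bullet\to0$ to pass hypothesis~(4) to the shifted complex $D_\bullet$ and iterate. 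This avoids spectral-sequence machinery at the cost of verifying that $Z_1$ is projective and that the sequence is degreewise split, both of which you handle. The two arguments have the same inductive skeleton; yours is self-contained, the paper's is shorter if one takes the UCSS as a black box. For $(2)\Rightarrow(1)$ the paper simply cites the result as well known, so your explicit construction of the contraction is a welcome addition rather than a deviation.
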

\begin{proof}
We only prove the case that $C_{\bullet}$ is a chain complex of right $R$-modules. The proof in the case of a left $R$-complex is similar.

Clearly, (1) implies all of the others. Trivially, $(3) \Rightarrow (2)$. Since $C_{\bullet}$ is degree-wise projective, it is well-known that $(2) \Rightarrow (1)$. To finish the proof, it suffices to prove $(4) \Rightarrow (2)$.

Assuming (4), we prove (2) using the Universal Coefficient Spectral Sequence (see e.g., \cite[thm.~(2.3)]{Levine}), whose $E_{2}$ terms are
\[
E_{2}^{p,q} = \mathrm{Ext}^{q}_{R} (H_{p} (C_{\bullet}), M).
\]
The spectral sequence converges to $H^{*} (C_{\bullet}; M)$. Clearly, $E_{2}^{p,q} = 0$ for all $q<0$. Since $C_{p} =0$ for all $p<0$, we see that $E_{2}^{p,q} = 0$ for all $p<0$. Then
\[
\mathrm{Hom}_{R} (H_{0}(C_{\bullet}), M) = E_{2}^{0,0} = E_{\infty}^{0,0} = 0
\]
for all $M$. Taking $M = H_{0}(C_{\bullet})$, we infer $H_{0}(C_{\bullet}) = 0$. Then we know $E_{2}^{0,q} = 0$ for all $q$ and $M$. We further infer
\[
\mathrm{Hom}_{R} (H_{1}(C_{\bullet}), M) = E_{2}^{1,0} = E_{\infty}^{1,0} = 0,
\]
which implies $H_{1}(C_{\bullet}) = 0$. By induction on $p$, we see that $H_{p}(C_{\bullet}) = 0$ for all $p$, which completes the proof.
\end{proof}

\begin{corollary}\label{cor_complex_isomorphism}
For a homomorphism of $f\: C_{\bullet} \to D_{\bullet}$
 of connective chain complexes of projective right (resp.~left)  $R$-modules,
the  following statements are equivalent:
\begin{enumerate}
\item $f$ is a chain homotopy equivalence.

\item $f$ is a quasi-isomorphism, i.e., the homomorphism $f_{*}:  H_{p} (C_{\bullet}) \rightarrow H_{p} (D_{\bullet})$ is an isomorphism for all $p$.

\item The homomorphism $f_{*}:  H_{p} (C_{\bullet}; M) \rightarrow H_{p} (D_{\bullet}; M)$ is an isomorphism for all $p$ and all left (resp. right) $R$-modules $M$.

\item The homomorphism $f^{*}:  H^{p} (D_{\bullet}; M) \rightarrow H^{p} (C_{\bullet}; M)$ is an isomorphism for all $p$ and all right (resp. left) $R$-modules $M$.
\end{enumerate}
\end{corollary}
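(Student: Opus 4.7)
The plan is to reduce everything to Lemma \ref{lem_acyclic} by applying it to the algebraic mapping cone of $f$. Concretely, form $E_\bullet := \mathrm{Cone}(f)$, defined in degree $n$ by $E_n = D_n \oplus C_{n-1}$ with the standard cone differential. Because $C_\bullet$ and $D_\bullet$ are connective and degree-wise projective, so is $E_\bullet$. Thus $E_\bullet$ satisfies the hypotheses of Lemma \ref{lem_acyclic}.

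Next, I would translate each of the four conditions on $f$ into a corresponding vanishing/contractibility condition on $E_\bullet$. For (1), it is a standard fact that a chain map between complexes of projectives is a chain homotopy equivalence if and only if its mapping cone is contractible (split exact), so (1) is equivalent to condition (1) of Lemma \ref{lem_acyclic} for $E_\bullet$. For (2), (3), (4), the key tool is the natural long exact sequence associated with the cone,
\[
\cdots \to H_{p}(C_\bullet;M) \xrightarrow{f_*} H_{p}(D_\bullet;M) \to H_{p}(E_\bullet;M) \to H_{p-1}(C_\bullet;M) \to \cdots,
\]
together with its cohomological analogue
\[
\cdots \to H^{p-1}(C_\bullet;M) \to H^{p}(E_\bullet;M) \to H^{p}(D_\bullet;M) \xrightarrow{f^*} H^{p}(C_\bullet;M) \to \cdots.
\]
The five-lemma applied to these sequences shows that (2) is equivalent to the acyclicity of $E_\bullet$, that (3) is equivalent to the vanishing of $H_p(E_\bullet;M)$ for all $p$ and all $M$, and that (4) is equivalent to the vanishing of $H^p(E_\bullet;M)$ for all $p$ and all $M$.

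Having translated each of the four statements into the corresponding one of Lemma \ref{lem_acyclic} applied to $E_\bullet$, the equivalence of (1)--(4) follows immediately from that lemma. The only subtlety to check is that $E_\bullet$ really is connective: since $D_n = 0$ for $n < 0$ and $C_{n-1} = 0$ for $n < 1$, indeed $E_n = 0$ for $n < 0$. There is no genuine obstacle here; the whole argument is a routine packaging of the mapping cone construction, and the main point is merely to verify that the projectivity and connectivity hypotheses transfer cleanly from $C_\bullet, D_\bullet$ to $E_\bullet$ so that Lemma \ref{lem_acyclic} applies.
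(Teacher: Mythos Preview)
Your proposal is correct and is exactly the approach taken in the paper, whose proof consists of the single sentence ``Apply Lemma \ref{lem_acyclic} to mapping cones.'' You have simply spelled out the standard details of that reduction.
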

\begin{proof}
Apply Lemma \ref{lem_acyclic} to the mapping cone.
\end{proof}

\subsection{Local systems}
For a space $X$ we let $\pi_X$ be its fundamental groupoid. This is the category whose objects are the points $x$ of $X$ where a morphism $x \rightarrow y$ is a path homotopy class starting at $x$ and terminating at $y$. A {\it local system} of abelian groups on $X$ is a functor
\[
\mathcal{B} \colon\! \pi_{X} \rightarrow \mathrm{Ab} \, ,
\]
where $\mathrm{Ab}$ denotes the category of abelian groups. If $X$ is path-connected, then a local system $\mathcal B$ is determined up to canonical isomorphism by its value $\mathcal{B}_{x_{0}}$ at a fixed base point $x_{0} \in X$ together with an action of $\pi_{1} (X, x_{0})$ on $\mathcal{B}_{x_{0}}$.

In particular, for a pair of spaces $(X,Y)$, one has the associated twisted singular chain and cochain complexes
\[
C_{\bullet} (X,Y; \mathcal{B}) \quad \text{ and } \quad C^{\bullet} (X,Y; \mathcal{B})\, .
\]
We recall how these are defined. In what follows, we may assume $X$ is path-connected by working over each path component. Let $\tilde X \rightarrow X$ be the universal cover associated with a fixed choice of basepoint $x_{0} \in X$. Denote by $\mathbf{\Lambda}$ the local system on $X$ associated with the group ring $\Lambda := \mathbb{Z}[\pi_{1} (X, x_{0})]$. Note that $\Lambda$ is a bimodule over itself. Denote by $\tilde Y = Y \times_{X} \tilde X$, i.e., the pullback of $\tilde X\to X$ along $Y \to X$. Then
\[
C_{\bullet} (X,Y; \mathbf \Lambda) := C_{\bullet} (\tilde X, \tilde Y; \mathbb{Z})
\]
is a left $\Lambda$-complex, where the $\pi_{1} (X, x_{0})$-action is given by deck transformations. The involution $g \mapsto g^{-1}$ of $\pi_{1} (X, x_{0})$ induces one on $\Lambda$ by extending linearly. Denote this involution by $\lambda \mapsto \bar \lambda$. Then the involution equips $C_{\bullet} (X,Y; \mathbf{\Lambda})$ with the structure of a right $\Lambda$-complex in which $\xi \cdot \lambda := \bar{\lambda} \cdot \xi$. Hence, $C_{\bullet} (X,Y; \mathbf{\Lambda})$ is both a left and a right $\Lambda$-complex.

Suppose $\mathcal{B}$ is a local system on $X$. Let
\[
M = \mathcal{B}_{x_{0}}.
\]
Then $M$ is a right $\Lambda$-module, and we have
\begin{equation}\label{equ_singular_complex}
C_{\bullet} (X,Y; \mathcal{B}) := M \otimes_{\Lambda} C_{\bullet} (X,Y; \mathbf{\Lambda}) \ \text{and} \ C^{\bullet} (X,Y; \mathcal{B}) := \mathrm{Hom}_{\Lambda} (C_{\bullet} (X,Y; \mathbf{\Lambda}), M).
\end{equation}

Note that $C_{\bullet} (X,Y; \mathbf{\Lambda})$ is a free $\Lambda$-complex. Therefore, we can apply Corollary \ref{cor_complex_isomorphism} (taking $R = \Lambda$) to $C_{\bullet} (X,Y; \mathcal{B})$ and $C^{\bullet} (X,Y; \mathcal{B})$.

\begin{remark}
Let $(X,Y)$ be a space pair. Then for local systems $\mathcal G,\mathcal H$, one has cup products
\[
C^{p} (X,Y; \mathcal{G}) \times C^{q} (X; \mathcal{H}) \overset{\cup}{\longrightarrow} C^{p+q} (X,Y; \mathcal{G} \otimes \mathcal{H})\, ,
\]
as well as cap products
\[
C_{p} (X,Y; \mathcal{G}) \times C^{q} (X; \mathcal{H}) \overset{\cap}{\longrightarrow} C_{p-q} (X,Y; \mathcal{G} \otimes \mathcal{H})\, ,
\]
and
\[
C_{p} (X,Y; \mathcal{G}) \times C^{q} (X,Y; \mathcal{H}) \overset{\cap}{\longrightarrow} C_{p-q} (X; \mathcal{G} \otimes \mathcal{H})\, .
\]
Here, $\mathcal{G} \otimes \mathcal{H}$ is the local system defined by the composition of functors
\[
\begin{CD}
\pi_X @> (\mathcal{G} ,\mathcal{H}) >> \mathrm{Ab} \times \mathrm{Ab} @> \otimes >> \mathrm{Ab}\, ,
\end{CD}
\]
where in the display, $\otimes$ denotes the tensor product operation on abelian groups.
\end{remark}

\begin{remark}\label{rmk_simplicial_complex}
Suppose $(X,Y)$ is a simplicial pair. We use $\Delta_{\bullet}$ and $\Delta^{\bullet}$ to denote the corresponding simplicial chain and cochain complexes. Clearly, $\Delta_{\bullet} (X,Y; \mathbf{\Lambda})$ is $\Lambda$-free,
\[
\Delta_{\bullet} (X,Y; \mathcal{B}) = M \otimes_{\Lambda} \Delta_{\bullet} (X,Y; \mathbf{\Lambda})  \ \ \text{and} \ \ \Delta^{\bullet} (X,Y; \mathcal{B}) = \mathrm{Hom}_{\Lambda} (\Delta_{\bullet} (X,Y; \mathbf{\Lambda}), M).
\]
Each of these is chain equivalent to its corresponding total singular chain complex, and the above results hold for these complexes as well. If we further assume $(X,Y)$ is finite, then $\Delta^{\bullet} (X,Y; \mathbf{\Lambda})$ is also $\Lambda$-free and
\[
\Delta^{\bullet} (X,Y; \mathcal{B}) = M \otimes_{\Lambda} \Delta^{\bullet} (X,Y; \mathbf{\Lambda}).
\]
\end{remark}

%%%------------------------------------------------------------------------------------------------------------------------------------------------------
%%%------------------------------------------------------------------------------------------------------------------------------------------------------
%%%------------------------------------------------------------------------------------------------------------------------------------------------------

\subsection{Reduction to the finite case}\label{sec_finite}
We introduce the following Lemmas \ref{lem_rd_space} and \ref{lem_rd_dual} which allow us to reduce a finitely dominated pair (triad) to a finite CW pair (triad).

A triple $(A; B_{1}, B_{2})$ is said to be a {\it space triad}  if $A$ is a topological space and $B_{i}$ are subspaces of $A$, where $B_{i}$ may be empty \cite[cf.~p.~8]{G.Whitehead}. When $A$ is a (finite) CW complex and $B_{i}$ are subcomplexes of $A$, we say $(A; B_{1}, B_{2})$ is a (finite) CW triad.
A map of space triads
\[
f: (A; B_{1}, B_{2}) \rightarrow (C; D_{1}, D_{2}),
\]
is a map of spaces $A\to C$ such that $f(B_i) \subset D_i$ for $i=1,2$.
If $I=[0,1]$ is the unit interval, then we may form the triad $(I\times A; I\times B_{1}, I\times B_{2})$.
If
\[
f_0,f_1\: (A; B_{1}, B_{2}) \rightarrow (C; D_{1}, D_{2})
\]
are maps of triads, then a homotopy from $f_0$ to $f_1$ is a map of triads
\[
H: (I \times A; I \times B_{1}, I \times B_{2}) \rightarrow (C; D_{1}, D_{2})
\]
such that $H_{0} = f_{0}$ and $H_{1} = f_{1}$, where $H_i$ denotes the restriction of $H$ to $i\times A$ for $i = 0,1$. We write $f_0 \simeq f_1$ in this case.

\begin{definition}\label{def_homotopy_triad}
A map of triads $f\: (A; B_{1}, B_{2})\rightarrow (C; D_{1}, D_{2})$ is called a \textit{homotopy equivalence of triads}
if there exists a map of triads $g\: (C; D_{1}, D_{2}) \rightarrow (A; B_{1}, B_{2})$ such that $f\circ g \simeq \mathrm{id}$ and $g\circ f \simeq \mathrm{id}$.
\end{definition}

\begin{definition}\label{def_finite_dominated}
Suppose $(A; B_{1}, B_{2})$ is a space triad and $(K; L_{1}, L_{2})$ is a finite CW triad. If there exist maps of triads
\[
(A; B_{1}, B_{2}) \rightarrow (K; L_{1}, L_{2}) \rightarrow (A; B_{1}, B_{2})
\]
such that the composition is homotopic to the identity, then $(A; B_{1}, B_{2})$ is said to be a \textit{finitely dominated triad}.
\end{definition}

\begin{remark}\label{rmk_pair}
In particular, if the above $B_{2}$ and $D_{2}$ are empty, then those space triads are reduced to space pairs $(A; B_{1})$ and $(C; D_{1})$. We also denote them by $(A, B_{1})$ and $(C, D_{1})$. Definitions \ref{def_homotopy_triad} and \ref{def_finite_dominated} are reduced to concepts associated with space pairs.
\end{remark}

\begin{remark}
There  exist finitely dominated Poincar\'{e} spaces which are not homotopy equivalent to any finite CW complex (see \cite[thm 1.5]{Wall1}).
\end{remark}

\begin{remark}
By \cite[cor.~1 and 2]{Browder3}, the fundamental group of a Poincar\'{e} space is finitely generated, and a Poincar\'{e} CW complex is finitely dominated if and only if its fundamental group is finitely presented. (See also \cite[p.\ 135]{Brown}.) Similar results also hold for Poincar\'{e} pairs. There do exist Poincar\'{e} spaces whose fundamental groups are not finitely presented, see \cite[thm.~C]{Davis}.
\end{remark}

\begin{lemma}\label{lem_rd_space}
Suppose $(A; B_{1}, B_{2})$ is a finitely dominated triad and $W$ is a finite CW complex with trivial Euler characteristic. Then there exist a finite CW triad $(K; L_{1}, L_{2})$ and a homotopy equivalence of triads
$f\: (K; L_{1}, L_{2}) \rightarrow (A \times W; B_{1} \times W, B_{2} \times W)$.
\end{lemma}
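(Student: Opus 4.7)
The plan is to invoke the Mather--Wall trick: if $X$ is finitely dominated and $W$ is a finite CW complex with $\chi(W)=0$, then $X\times W$ has the homotopy type of a finite CW complex. This is the standard consequence of the product formula for Wall's finiteness obstruction, $\sigma(X\times W)=\chi(W)\sigma(X)+\chi(X)\sigma(W)$, together with the vanishing $\sigma(W)=0$ (which holds because $W$ is already a finite CW complex). Applied separately to the individually finitely dominated spaces $A$ and $B$, this produces finite CW complexes $K_{0}$ and $L$ with homotopy equivalences $h\colon K_{0}\to A\times W$ and $j\colon L\to B\times W$.

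Next, I assemble these absolute models into a pair by a mapping cylinder construction. Let $i\colon B\times W\hookrightarrow A\times W$ denote the inclusion. Choose a homotopy inverse $h'$ of $h$; then $h'\circ i\circ j\colon L\to K_{0}$ may be replaced, via cellular approximation, by a homotopic cellular map $\phi\colon L\to K_{0}$, and $h\circ\phi\simeq i\circ j$ via some homotopy $H\colon L\times I\to A\times W$. Let $K$ be the mapping cylinder of $\phi$. Because $\phi$ is a cellular map between finite CW complexes, $K$ is itself a finite CW complex containing $L=L\times\{0\}$ as a subcomplex, and the collapse $r\colon K\to K_{0}$ is a deformation retraction.

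Finally, define $f\colon K\to A\times W$ by $f|_{L\times\{0\}}=i\circ j$, $f|_{L\times I}=H$, and $f|_{K_{0}}=h$; this is consistent with the mapping cylinder identification $(l,1)\sim\phi(l)$ since $H(l,1)=h\phi(l)$. By construction $f(L)\subseteq B\times W$ and $f|_{L}=j$ is a homotopy equivalence onto $B\times W$. Moreover $f\simeq h\circ r$ by the obvious straight-line homotopy in the cylinder coordinate, so $f\colon K\to A\times W$ is a homotopy equivalence. Hence $f\colon(K,L)\to(A\times W,B\times W)$ is a homotopy equivalence of pairs in the sense of Definition~\ref{def_homotopy_pair}. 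The only substantive input is the Mather--Wall theorem in the absolute case; the passage from absolute to relative is the mapping-cylinder assembly above, which I expect to be the only place requiring care.
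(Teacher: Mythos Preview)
Your proof is correct and follows essentially the same route as the paper: both invoke the product formula for Wall's finiteness obstruction (the paper cites Gersten) to make $A\times W$ and $B\times W$ homotopy finite separately, and then assemble the two absolute models into a pair via the mapping cylinder of a cellular representative of the composite $L\to B\times W\to A\times W\to K_{0}$. Your write-up is in fact more explicit than the paper's, which simply declares the existence of $f$ to be evident after forming the mapping cylinder.
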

\begin{proof}
Since $B_{1} \cap B_{2}$ is finitely dominated, $W$ is finite and $\chi (W) =0$, by Gersten's product formula \cite[thm.~0.1]{Gersten}, Wall's finiteness obstruction of $(B_{1} \cap B_{2}) \times W$ is $0$. Hence, there exist a finite CW complex $Z$ and a homotopy equivalence $j: Z \rightarrow (B_{1} \cap B_{2}) \times W$. Similarly, there are homotopy equivalences $g\: P \rightarrow A \times W$, $h_{1}\: Q_{1} \rightarrow B_{1} \times W$ and $h_{2}\: Q_{2} \rightarrow B_{2} \times W$, where $P$, $Q_{1}$ and $Q_{2}$ are finite CW complexes.

Let $\hat{h}_{i}$ be a homotopy inverse of $h_{i}$. The map $\hat{h}_{i} j$ is homotopic to a cellular map $\varphi_{i}: Z \rightarrow Q_{i}$. Let $L_{i}$ be the mapping cylinder of $\varphi_{i}$. Obviously, there is a map $h'_{i}: L_{i} \rightarrow B_{i} \times W$ such that $h'_{i}|_{Q_{i}} = h_{i}$ and $h'_{i}|_{Z} = j$. Here $Z$ is a subspace of both $L_{1}$ and $L_{2}$. Clearly, $h'_{i}$ are homotopy equivalences. Gluing $L_{1}$ and $L_{2}$ along $R$, we obtain a space $L_{1} \cup L_{2}$ and a map
\[
h': (L_{1} \cup L_{2}; L_{1}, L_{2}) \rightarrow ((B_{1} \cup B_{2}) \times W; B_{1} \times W, B_{2} \times W)
\]
such that $h'|_{L_{i}} = h'_{i}$ and $L_{1} \cap L_{2} = R$.

Let $\hat{g}$ be a homotopy inverse of $g$. The map $\hat{g} h'$ is homotopic to a cellular map $\phi\: L_{1} \cup L_{2} \rightarrow P$. Let $K$ be the mapping cylinder of $\phi$. There is a map
\[
f: (K; L_{1}, L_{2}) \rightarrow (A \times W; B_{1} \times W, B_{2} \times W)
\]
such that $f|_{L_{1} \cup L_{2}} = h'$ and $f|_{P} = g$. Clearly, $(K; L_{1}, L_{2})$ is a finite CW triad. It's also easy to see $f$ is a singular homotopy equivalence in the sense of \cite[p.~274]{Milnor59}, i.e. a weak homotopy equivalence \cite[p.~220, (3.1)]{G.Whitehead}. Furthermore, $(A \times W; B_{1} \times W, B_{2} \times W)$ is a finitely dominated triad. Now the conclusion follows from \cite[thm.~2~\&~lem.~1]{Milnor59}.
\end{proof}

Assume $X$ is path-connected. Denote by $\mathbf{\Lambda}$ the local system associated with $\mathbb{Z} [\pi_1(X)]$ on $X$. Let $\eta \in H_{n} (X,Y; \mathcal{O})$ be a homology class, where  $\mathcal{O}$ is an orientation system on $X$. Suppose $\theta \in H_{3} (S^{3}; \mathbb{Z})$ is a fundamental class of $S^{3}$. Let $p_{1}: X \times S^{3} \rightarrow X$ denote the projection. In the following, $p_{1}^{*} \mathcal{O}$ is the pullback of $\mathcal{O}$ via $p_{1}$ (cf.~\cite[p.~265]{G.Whitehead}).

\begin{lemma}\label{lem_rd_dual}
Suppose $(X,Y)$ is a finitely dominated pair. Then the following statements (1)-(4) are equivalent.
\begin{enumerate}
\item The following are isomorphisms for all $*$:
\[
\eta \cap \: H^{*} (X; \mathbf{\Lambda}) \overset{\cong}{\longrightarrow} H_{n-*} (X, Y; \mathcal{O} \otimes \mathbf{\Lambda}).
\]

\item The following are isomorphisms for all $*$ and all local systems $\mathcal{G}$ on $X$:
\[
\eta \cap \: H^{*} (X; \mathcal{G}) \overset{\cong}{\longrightarrow} H_{n-*} (X, Y; \mathcal{O} \otimes \mathcal{G}).
\]

\item The following are isomorphisms for all $*$:
\[
(\eta \times \theta) \cap \: H^{*} (X \times S^{3}; \mathbf{\Lambda}) \overset{\cong}{\longrightarrow} H_{n+3-*} (X \times S^{3}, Y \times S^{3}; p_{1}^{*} \mathcal{O} \otimes \mathbf{\Lambda}).
\]

\item The following are isomorphisms for all $*$ and all local systems $\mathcal{G}$ on $X \times S^{3}$:
\[
(\eta \times \theta) \cap \: H^{*} (X \times S^{3}; \mathcal{G}) \overset{\cong}{\longrightarrow} H_{n+3-*} (X \times S^{3}, Y \times S^{3}; p_{1}^{*} \mathcal{O} \otimes \mathcal{G}).
\]
\end{enumerate}

Furthermore, the following (5)-(8) are equivalent.
\begin{enumerate}
\setcounter{enumi}{4}

\item The following are isomorphisms for all $*$:
\[
\eta \cap \: H^{*} (X, Y; \mathbf{\Lambda}) \overset{\cong}{\longrightarrow} H_{n-*} (X; \mathcal{O} \otimes \mathbf{\Lambda}).
\]

\item The following are isomorphisms for all $*$ and all local systems $\mathcal{G}$ on $X$:
\[
\eta \cap \: H^{*} (X, Y; \mathcal{G}) \overset{\cong}{\longrightarrow} H_{n-*} (X; \mathcal{O} \otimes \mathcal{G}).
\]

\item The following are isomorphisms for all $*$:
\[
(\eta \times \theta) \cap \: H^{*} (X \times S^{3}, Y \times S^{3}; \mathbf{\Lambda}) \overset{\cong}{\longrightarrow} H_{n+3-*} (X \times S^{3}; p_{1}^{*} \mathcal{O} \otimes \mathbf{\Lambda}).
\]

\item The following are isomorphisms for all $*$ and all local systems $\mathcal{G}$ on $X \times S^{3}$:
\[
(\eta \times \theta) \cap \: H^{*} (X \times S^{3}, Y \times S^{3}; \mathcal{G}) \overset{\cong}{\longrightarrow} H_{n+3-*} (X \times S^{3}; p_{1}^{*} \mathcal{O} \otimes \mathcal{G}).
\]
\end{enumerate}
\end{lemma}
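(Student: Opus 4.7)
The plan is to establish the eight equivalences in two groups: the four ``horizontal'' equivalences (1)$\Leftrightarrow$(2), (3)$\Leftrightarrow$(4), (5)$\Leftrightarrow$(6), and (7)$\Leftrightarrow$(8), each asserting that verifying duality with coefficients in $\mathbf{\Lambda}$ upgrades to verifying it with all local systems; and the two ``vertical'' equivalences (1)$\Leftrightarrow$(3) and (5)$\Leftrightarrow$(7), which will pass between $X$ and $X\times S^{3}$ via the K\"unneth formula.

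For the horizontal equivalences, I will concentrate on (1)$\Leftrightarrow$(2), since the others are analogous. A chain representative of $\eta$ gives a $\Lambda$-linear chain map
\[
\eta\cap\: \mathrm{Hom}_{\Lambda}(C_{\bullet}(X;\mathbf{\Lambda}),\Lambda) \longrightarrow C_{n-\bullet}(X,Y;\mathcal{O}\otimes\mathbf{\Lambda}),
\]
whose instantiation on any local system $\mathcal{G}$ with stalk $M$ is obtained by tensoring the target with $M$ over $\Lambda$ and applying $\mathrm{Hom}_{\Lambda}(-,M)$ to the source. Statements (1) and (2) then assert, respectively, that this universal map is a quasi-isomorphism of $\Lambda$-complexes and that each of its coefficient instantiations is a quasi-isomorphism. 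Finite domination of $(X,Y)$ guarantees, via Wall's theorem, that $C_{\bullet}(X,Y;\mathbf{\Lambda})$ and $C_{\bullet}(X;\mathbf{\Lambda})$ are chain equivalent to bounded complexes of finitely generated projective $\Lambda$-modules; the $\Lambda$-Hom dual then shares this property. Corollary \ref{cor_complex_isomorphism} now converts (1) into the statement that the displayed chain map is a chain homotopy equivalence of $\Lambda$-complexes, and such an equivalence is preserved by any coefficient base change, yielding (2). The reverse (2)$\Rightarrow$(1) is trivial via $\mathcal{G} = \mathbf{\Lambda}$. The same argument handles (5)$\Leftrightarrow$(6); the equivalences (3)$\Leftrightarrow$(4) and (7)$\Leftrightarrow$(8) follow by applying the same reasoning to $(X\times S^{3}, Y\times S^{3})$, which is finitely dominated because $S^{3}$ is a finite CW complex.

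For the vertical equivalences, I will apply the Eilenberg--Zilber theorem over $\Lambda$ to obtain natural $\Lambda$-chain equivalences
\[
C_{\bullet}(X\times S^{3};\mathbf{\Lambda}) \simeq C_{\bullet}(X;\mathbf{\Lambda})\otimes_{\mathbb{Z}} C_{\bullet}(S^{3};\mathbb{Z})
\]
and likewise for the relative complex and (via $\mathrm{Hom}_{\Lambda}(-,\Lambda)$) for the cochain complex, the latter because $C_{*}(S^{3};\mathbb{Z})$ is a finite complex of finitely generated free abelian groups. Under these identifications, a chain-level representative of $\eta\times\theta$ is the external product of representatives of $\eta$ and $\theta$, so the cap product with $\eta\times\theta$ factors as the external tensor product of the cap products with $\eta$ and $\theta$. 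Because $\theta\cap\: H^{*}(S^{3};\mathbb{Z}) \to H_{3-*}(S^{3};\mathbb{Z})$ is an isomorphism and $H_{*}(S^{3};\mathbb{Z})$ is free abelian of total rank $2$, the tensor-product cap product is an isomorphism in every degree if and only if $\eta\cap$ is, which gives (1)$\Leftrightarrow$(3). An identical argument with the relative complexes replacing the absolute ones produces (5)$\Leftrightarrow$(7).

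The principal technical obstacle will be constructing the cap product on $X\times S^{3}$ as the external tensor product of cap products at the chain level with twisted $\Lambda$-coefficients, keeping track of the involution and signs. Once this chain-level picture is in place, the remainder is essentially formal: finite domination reduces everything to finitely generated projective $\Lambda$-complexes, where Corollary \ref{cor_complex_isomorphism} is the decisive tool.
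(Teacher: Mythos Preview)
Your argument is correct, but it follows a different logical route from the paper's.

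The paper does \emph{not} establish the horizontal equivalences (1)$\Leftrightarrow$(2), etc., directly. Instead it first proves the vertical equivalences (1)$\Leftrightarrow$(3) and (2)$\Leftrightarrow$(4) using the K\"unneth square (Theorems \ref{thm_Kunneth_h}, \ref{thm_Kunneth_c}, \ref{thm_Kunneth_cross}) and the fact that $p_{1*}\colon \pi_1(X\times S^3)\to\pi_1(X)$ is an isomorphism; then, since (4)$\Rightarrow$(3) is trivial, it only needs (3)$\Rightarrow$(4). For that step it invokes Lemma \ref{lem_rd_space}: crossing with $S^{3}$ kills the Wall finiteness obstruction (Gersten's product formula), so $(X\times S^{3},Y\times S^{3})$ is homotopy equivalent to a \emph{finite} simplicial pair $(K,L)$. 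One then works with the simplicial complexes $\Delta^{\bullet}(K;\mathbf\Lambda)$ and $\Delta_{\bullet}(K,L;\mathcal O_1\otimes\mathbf\Lambda)$, which are bounded complexes of \emph{free} $\Lambda$-modules, and applies Corollary \ref{cor_complex_isomorphism} after a shift. In the paper's scheme the $S^{3}$ is not decorative: it is precisely what permits the reduction to a finite complex, avoiding any appeal to Wall's projective-replacement theorem.

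Your approach, by contrast, handles (1)$\Leftrightarrow$(2) directly by invoking Wall's theorem that finite domination yields a bounded complex of finitely generated projective $\Lambda$-modules, and then uses that for such complexes $M\otimes_\Lambda\mathrm{Hom}_\Lambda(P_\bullet,\Lambda)\cong\mathrm{Hom}_\Lambda(P_\bullet,M)$ together with the boundedness to apply Corollary \ref{cor_complex_isomorphism}. This is essentially Wall's original argument \cite[lem.~1.2]{Wall1}, which the paper explicitly cites as known but chooses to re-derive differently. Your route is more direct and makes the role of $S^{3}$ incidental; the paper's route trades Wall's projective-replacement theorem for Gersten's product formula and has the side benefit of setting up the finite-simplicial framework used repeatedly later (e.g., in the proofs of Theorems \ref{thm_P_eqv} and \ref{thm_doubling}). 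Two small points to tighten in your write-up: Corollary \ref{cor_complex_isomorphism} is stated for connective complexes, so you should remark that a degree shift handles the bounded case; and you should say a word about transporting the chain-level cap product along the chain equivalence to the projective replacement.
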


\begin{remark}
The equivalences $(1) \Leftrightarrow (2)$ and $(5) \Leftrightarrow (6)$ in Lemma \ref{lem_rd_dual} are not new, see e.g. \cite[lem.~1.2]{Wall1}. Nevertheless, for the convenience of readers, we shall give a slightly different proof.
\end{remark}

\begin{remark} In the above,
rather than taking product with $S^{3}$, we could have instead taken the product with $S^{1}$ at the expense of changing the fundamental group.
\end{remark}

\begin{proof}[Proof of Lemma \ref{lem_rd_dual}]
We shall only prove the equivalence from (1) to (4). The argument for (5)-(8) is similar.

Firstly, we prove the equivalences $(1) \Leftrightarrow (3)$ and $(2) \Leftrightarrow (4)$.

By Theorem \ref{thm_Kunneth_cross}, the following square commutes up to sign.
\[
\xymatrix{
  \underset{q+r=*}{\bigoplus} H^{q} (X; \mathcal{G}) \otimes H^{r} (S^{3}; \mathbb{Z}) \ar[d]_{(\eta \cap) \otimes (\theta \cap)} \ar[r]^-{\times}_-{\cong} & H^{*} (X \times S^{3}; p_{1}^{*} \mathcal{G}) \ar[d]^{(\eta \times \theta) \cap} \\
  \underset{q+r=*}{\bigoplus} H_{n-q} (X, Y; \mathcal{G}) \otimes H_{3-r} (S^{3}; \mathbb{Z}) \ar[r]^-{\times}_-{\cong} & H_{n+3-*} (X \times S^{3}, Y \times S^{3}; p_{1}^{*} \mathcal{O} \otimes p_{1}^{*} \mathcal{G})\, .    }
\]
Since $H_{\bullet} (S^{3}; \mathbb{Z})$ and $H^{\bullet} (S^{3}; \mathbb{Z})$ are free, by the K\"{u}nneth Theorems \ref{thm_Kunneth_h} and \ref{thm_Kunneth_c}, the two
horizontal maps in this square are isomorphisms. Thus the right vertical map
 is an isomorphism if and only if the left one is. Again by the freeness of $H_{\bullet} (S^{3}; \mathbb{Z})$ and $H^{\bullet} (S^{3}; \mathbb{Z})$, the left vertical isomorphism is equivalent to the isomorphism
\[
\eta \cap\: H^{q} (X; \mathcal{G}) \overset{\cong}{\longrightarrow} H_{n-q} (X, Y; \mathcal{G}).
\]
Since ${p_{1}}_{*}: \pi_{1} (X \times S^{3}) \rightarrow \pi_{1} (X)$ is an isomorphism, we infer $(1) \Leftrightarrow (3)$ and $(2) \Leftrightarrow (4)$.

Trivially, we have $(4) \Rightarrow (3)$.  To finish the proof, it suffices to prove $(3) \Rightarrow (4)$.

By Lemma \ref{lem_rd_space}, there exists a finite simplicial pair $(K,L)$ such that
\[
(K,L) \simeq (X \times S^{3}, Y \times S^{3}).
\]
Assume the isomorphisms
\[
\eta \cap\: H^{*} (K; \mathbf{\Lambda}) \overset{\cong}{\longrightarrow} H_{n-*} (K,L; \mathcal{O}_{1} \otimes \mathbf{\Lambda})
\]
for some orientation system $\mathcal{O}_{1}$ on $K$.  As mentioned in Remark \ref{rmk_simplicial_complex}, we may work with the simplicial chain complexes $\Delta_{\bullet}$ and cochain complexes $\Delta^{\bullet}$. Choose a cycle $\alpha \in
\Delta_{n} (K,L; \mathcal{O}_{1})$ representing $\eta$. Then the following chain map
\[
\alpha \cap \colon\!  \Delta^{*}(K; \mathbf{\Lambda}) \rightarrow \Delta_{n-*} (K,L; \mathcal{O}_{1} \otimes \mathbf{\Lambda})
\]
induces an isomorphism between homology groups. Since $(K,L)$ is finite, we know that $\Delta^{*} (K; \mathbf{\Lambda}) = 0$ and $\Delta_{n-*} (K,L; \mathcal{O}_{1} \otimes \mathbf{\Lambda}) = 0$ when $*$ is small or large enough. Thus, by Remark \ref{rmk_simplicial_complex} and Corollary \ref{cor_complex_isomorphism}, we obtain the isomorphisms
\[
\eta \cap\: H^{*} (K; \mathcal{B}) \overset{\cong}{\longrightarrow} H_{n-*} (K,L; \mathcal{O}_{1} \otimes \mathcal{B})
\]
for all integers $*$ and all local systems $\mathcal{B}$. This provides
the implication $(3) \Rightarrow (4)$.
\end{proof}

\begin{remark}
Instead of proving $(1) \Leftrightarrow (2)$ in Lemma \ref{lem_rd_dual} directly, we reduced the conclusion for the finitely dominated pair $(X,Y)$ to that for the finite simplicial pair $(K,L)$. This reduction will be frequently employed in the paper.
\end{remark}

%%%------------------------------------------------------------------------------------------------------------------------------------------------------
%%%------------------------------------------------------------------------------------------------------------------------------------------------------
%%%------------------------------------------------------------------------------------------------------------------------------------------------------
\section{The Thom Isomorphism}\label{sec_Thom}

We  define  below a version of the Thom isomorphism for space pairs and investigate its properties.

\begin{definition}[Thom Isomorphism]\label{def_Thom}
Let $(X,Y)$ be a space pair. Suppose $\mathcal{O}$ is an orientation system on $X$, and $u \in H^{k} (X,Y; \mathcal{O})$ with $k \geq 0$. If
\[
u \cup\: H^{*} (X; \mathcal{B}) \overset{\cong}{\longrightarrow} H^{k+*} (X,Y; \mathcal{O} \otimes \mathcal{B})
\]
is an isomorphism for all integers $*$ and local systems $\mathcal{B}$ on $X$, then we say $(X,Y)$ satisfies the \textit{Thom isomorphism} of formal dimension $k$. We call $u$ a \textit{Thom class} of $(X,Y)$.
\end{definition}

\begin{remark}
Suppose $(X,Y)$ satisfies the Thom isomorphism of formal dimension $k$. We claim that $k>0$ if and only if $\pi_{0} (Y) \to \pi_{0} (X)$ is surjective. Assuming $\pi_0(Y) \to \pi_0(X)$ is surjective, we infer $H^{0} (X,Y; \mathcal{O}) = 0$. If $k=0$, we would have $H^{0} (X; \mathbb{Z}) =0$ which is impossible since $X$ is non-empty. Conversely, assuming $k>0$, we get $H^{0} (X,Y; \mathbb{Z}) \cong H^{-k} (X; \mathcal{O}) = 0$. Obviously, $\pi_{0} (Y) \to \pi_{0}(X)$ is surjective.
\end{remark}

\begin{remark}
It's easy to see that $(X,Y)$ satisfies the Thom isomorphism of formal dimension $0$ if and only if $Y$ is empty.
\end{remark}

\begin{theorem}[Thom Isomorphism: Uniqueness]\label{thm_T_unq}
Suppose $\mathcal{O}_{1}$ and $\mathcal{O}_{2}$ are two orientation systems on a path-connected space $X$. Suppose both $u_{1} \in H^{k_{1}} (X, Y; \mathcal{O}_{1})$ and $u_{2} \in H^{k_{2}} (X, Y; \mathcal{O}_{2})$ are Thom classes. Then $k_{1} = k_{2}$ and there is a unique isomorphism $\Phi\: \mathcal{O}_{1} \overset{\cong}{\rightarrow} \mathcal{O}_{2}$ covering identity map of $X$ such that $\Phi^{*} (u_{2}) = u_{1}$.
\end{theorem}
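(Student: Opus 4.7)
The plan is to exploit both Thom isomorphisms to manufacture a pair of mutually inverse degree-zero cohomology classes, and then translate the resulting class into a bundle isomorphism $\mathcal{O}_1 \cong \mathcal{O}_2$. Writing $\mathcal{O}_i^{\vee}$ for the $\mathbb{Z}$-dual orientation system, so that $\mathcal{O}_i \otimes \mathcal{O}_i^{\vee} \cong \mathbb{Z}$, I first invoke the Thom isomorphism for $u_1$ with coefficient system $\mathcal{O}_1^{\vee} \otimes \mathcal{O}_2$ to obtain a unique class $v \in H^{k_2-k_1}(X; \mathcal{O}_1^{\vee} \otimes \mathcal{O}_2)$ satisfying $u_1 \cup v = u_2$. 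Reversing the roles of $u_1$ and $u_2$ yields a unique $w \in H^{k_1-k_2}(X; \mathcal{O}_2^{\vee} \otimes \mathcal{O}_1)$ with $u_2 \cup w = u_1$.

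Using associativity of the cup product, $u_1 \cup (v \cup w) = u_2 \cup w = u_1 = u_1 \cup 1$, and injectivity of the Thom isomorphism for $u_1$ at coefficients $\mathbb{Z}$ then forces $v \cup w = 1 \in H^0(X; \mathbb{Z})$; symmetrically $w \cup v = 1$. Since $X$ is path-connected, cohomology in negative degrees vanishes, so if $k_1 \neq k_2$ one of $v, w$ would be zero, contradicting $v \cup w = 1$. Hence $k_1 = k_2 =: k$. With the degrees equal, $v$ becomes a class in $H^0(X; \mathcal{O}_1^{\vee} \otimes \mathcal{O}_2) \cong H^0(X; \Hom(\mathcal{O}_1, \mathcal{O}_2))$, which is naturally the group of flat bundle homomorphisms $\mathcal{O}_1 \to \mathcal{O}_2$ covering the identity. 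Let $\Phi$ denote the bundle homomorphism corresponding to $v$; the relations $v \cup w = w \cup v = 1$ show that $\Phi$ admits a two-sided inverse (the one associated with $w$), so $\Phi$ is a bundle isomorphism, and by construction $\Phi^{*}(u_2) = u_1$.

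For uniqueness, any other bundle isomorphism $\Phi'$ covering the identity with $(\Phi')^{*}(u_2) = u_1$ corresponds under the above identification to a class $v' \in H^0(X; \mathcal{O}_1^{\vee} \otimes \mathcal{O}_2)$ satisfying $u_1 \cup v' = u_2 = u_1 \cup v$; the Thom isomorphism for $u_1$ with these coefficients then gives $v' = v$, whence $\Phi' = \Phi$. The main subtlety I anticipate is the identification used in the second paragraph between the degree-zero cohomology class $v$ and a bundle isomorphism $\Phi$: one must verify that $H^0(X; \Hom(\mathcal{O}_1, \mathcal{O}_2))$ is canonically the set of parallel bundle homomorphisms and that cohomological invertibility (the existence of a two-sided $\cup$-product inverse) translates into pointwise invertibility of the underlying bundle map of local systems of infinite cyclic groups.
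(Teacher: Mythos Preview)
Your argument is correct and the subtlety you flag is routine: for path-connected $X$, the group $H^{0}(X;\Hom(\mathcal{O}_{1},\mathcal{O}_{2}))$ is canonically the set of parallel bundle maps, cupping with a degree-$0$ class is coefficient change, and since the stalks are infinite cyclic the relation $v\cup w=1$ forces the associated scalar to be $\pm 1$, hence $\Phi$ is invertible fiberwise.

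Your route differs from the paper's. The paper first pins down $k_{1}=k_{2}$ directly by observing that the Thom isomorphism forces $H^{*}(X,Y;\mathcal{B})=0$ for $*<k_{i}$ while $H^{k_{i}}(X,Y;\mathcal{O}_{i})\cong\mathbb{Z}$; it then shows $\mathcal{O}_{1}\cong\mathcal{O}_{2}$ abstractly by noting that both Thom isomorphisms give $H^{0}(X;\mathcal{O}_{1}^{*}\otimes\mathcal{O}_{2})\cong H^{k}(X,Y;\mathcal{O}_{2})\cong\mathbb{Z}\neq 0$, which for a rank-one system on a path-connected space forces $\mathcal{O}_{1}^{*}\otimes\mathcal{O}_{2}$ to be constant; finally it exploits that there are exactly two bundle isomorphisms $\Phi_{1}=-\Phi_{2}$ and selects the one carrying the generator $u_{2}$ to the generator $u_{1}$. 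Your approach is more constructive: you manufacture $\Phi$ directly as the degree-$0$ class $v$ solving $u_{1}\cup v=u_{2}$, obtain $k_{1}=k_{2}$ as a by-product of the unit equation $v\cup w=1$, and get uniqueness from injectivity of $u_{1}\cup$ rather than from the $\pm 1$ dichotomy. The paper's argument is shorter because it leans on the rank-one structure (only two candidate isomorphisms); yours is a bit more bookkeeping but would generalize more readily beyond orientation systems.
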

\begin{proof}
Since $u_{1}$ is a Thom class of formal dimension $k_{1}$, by Definition \ref{def_Thom}, we see that $H^{*} (X,Y; \mathcal{B}) =0$ for all local systems $\mathcal{B}$ and all $* < k_{1}$. Furthermore,
\[
H^{k_{1}} (X,Y; \mathcal{O}_{1}) \cong H^{0} (X; \mathbb{Z}) \cong \mathbb{Z} \neq 0.
\]
In other words, there exists a local system $\mathcal{B}$ such that $H^{k_{1}} (X,Y; \mathcal{B}) \neq 0$. Applying the same argument to $u_{2}$, we see that $k_{1} = k_{2}$.

Denote by $k= k_{1} = k_{2}$. By Definition \ref{def_Thom}, we see that
\[
\xymatrix@C=0.5cm{
  H^{0} (X; \mathcal{O}^\ast_{1} \otimes \mathcal{O}_{2}) \ar[r]^-{u_{1} \cup}_-{\cong} & H^{k} (X, Y; \mathcal{O}_{2}) & \ar[l]_-{u_{2} \cup}^-{\cong} H^{0} (X; \mathbb{Z}) \cong \mathbb{Z} },
\]
where $\mathcal{O}^{*}_{1} := \hom(\mathcal O_1,\Bbb Z)$ is the linear dual of $\mathcal{O}_{1}$, and we use the fact that $H^{k} (X,Y; \mathcal{O}_{1} \otimes \mathcal{O}^{*}_{1} \otimes \mathcal{O}_{2}) = H^{k} (X, Y; \mathcal{O}_{2})$. We claim that $\mathcal{O}_{1} \cong \mathcal{O}_{2}$. For if not, then $\mathcal{O}^\ast_{1} \otimes \mathcal{O}_{2}$ is not constant. Since $X$ is path-connected, we infer that $H^{0} (X; \mathcal{O}^\ast_{1} \otimes \mathcal{O}_{2}) = 0$, which leads to a contradiction.

Since $\mathcal{O}_{1}$ and $\mathcal{O}_{2}$ are systems of infinite cyclic groups, by the path connectivity of $X$ again, there are exactly two bundle isomorphisms $\Phi_{1}$ and $\Phi_{2}$ between $\mathcal{O}_{1}$ and $\mathcal{O}_{2}$ such that $\Phi_{1} = - \Phi_{2}$. We then obtain $\Phi_{1}^{*} = - \Phi_{2}^{*}$. Since each homomorphism $u_{i} \cup$ yields an isomorphism
\[
\mathbb{Z} \cong H^{0} (X; \mathbb{Z}) \overset{\cong}{\longrightarrow} H^{k} (X,Y; \mathcal{O}_{i}),
\]
we infer that $u_{i}$ is a generator of $H^{k} (X,Y; \mathcal{O}_{i}) \cong \mathbb{Z}$. Consequently, we may suitably choose the desired $\Phi$ as $\Phi_{1}$ or $\Phi_{2}$.
\end{proof}

In what follows suppose $X$ is a path-connected space which has a universal cover, $\mathcal{O}$ is an orientation system on $X$, and $u \in H^{k} (X, Y; \mathcal{O})$ is a cohomology class. Let $\mathbf{\Lambda}$ be the local system associated with $\Lambda: = \mathbb{Z} [\pi_{1} (X)]$ on $X$.

\begin{theorem}[Thom Isomorphism: Equivalence]\label{thm_T_eqv}
With respect to the above assumptions the following statements are equivalent.
\begin{enumerate}
\item The homomorphism
\[
\cap u\: H_{k+*} (X, Y; \mathbf{\Lambda}) \overset{\cong}{\longrightarrow} H_{*} (X; \mathbf{\Lambda} \otimes \mathcal{O})\,
\]
is an isomorphism in all degrees.

\item For all local systems $\mathcal{B}$ on $X$, the homomorphism
\[
\cap u\: H_{k+*} (X, Y; \mathcal{B}) \overset{\cong}{\longrightarrow} H_{*} (X; \mathcal{B} \otimes \mathcal{O}).
\]
is an isomorphism in all degrees.

\item For all local systems $\mathcal{B}$ on $X$, the homomorphism
\[
u \cup\: H^{*} (X; \mathcal{B}) \overset{\cong}{\longrightarrow} H^{k+*} (X, Y; \mathcal{O} \otimes \mathcal{B})
\]
is an isomorphism in all degrees.
\end{enumerate}
\end{theorem}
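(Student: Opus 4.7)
The plan is to identify all three assertions as equivalent conditions on a single chain-level cap-product map of $\Lambda$-complexes, then invoke Corollary \ref{cor_complex_isomorphism}. Represent $u$ by a cocycle $\varphi\in C^{k}(X,Y;\mathcal{O})$ and consider the $\Lambda$-linear chain map
\[
\Phi\colon\! C_{\bullet+k}(X,Y;\mathbf{\Lambda}) \longrightarrow C_{\bullet}(X;\mathbf{\Lambda}\otimes\mathcal{O}),\qquad \Phi(c):=c\cap\varphi.
\]
By \eqref{equ_singular_complex} both source and target are complexes of free right $\Lambda$-modules (the orientation twist in the target merely modifies the $\pi_{1}$-action on the canonical generator), and after a routine reindexing they become connective. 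The map $\Phi$ is independent of the choice of $\varphi$ up to $\Lambda$-equivariant chain homotopy.

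For an arbitrary local system $\mathcal{B}$ on $X$ with fiber $M:=\mathcal{B}_{x_{0}}$ viewed as a right $\Lambda$-module, \eqref{equ_singular_complex} furnishes natural identifications
\[
C_{\bullet}(X,Y;\mathcal{B}) \cong M\otimes_{\Lambda}C_{\bullet}(X,Y;\mathbf{\Lambda}),
\]
\[
C_{\bullet}(X;\mathcal{B}\otimes\mathcal{O}) \cong M\otimes_{\Lambda}C_{\bullet}(X;\mathbf{\Lambda}\otimes\mathcal{O}),
\]
\[
C^{\bullet}(X;\mathcal{B}) \cong \mathrm{Hom}_{\Lambda}\bigl(C_{\bullet}(X;\mathbf{\Lambda}),M\bigr),
\]
and analogously for the cochain complex of the pair. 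Under these identifications, the $\mathcal{B}$-coefficient cap product $\cap u$ becomes $\mathrm{id}_{M}\otimes\Phi$, while the $\mathcal{B}$-coefficient cup product $u\cup(-)$ becomes, up to sign, $\mathrm{Hom}_{\Lambda}(\Phi,M)$, via the chain-level adjunction $\langle u\cup\beta,c\rangle=\langle\beta,c\cap u\rangle$.

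Consequently, statement (1) asserts that $\Phi$ is a quasi-isomorphism; statement (2) asserts that $\mathrm{id}_{M}\otimes\Phi$ is a quasi-isomorphism for every right $\Lambda$-module $M$; and statement (3) asserts that $\mathrm{Hom}_{\Lambda}(\Phi,M)$ is a quasi-isomorphism for every right $\Lambda$-module $M$. These are precisely conditions (2), (3), (4) of Corollary \ref{cor_complex_isomorphism} applied to $\Phi$, and hence they are all equivalent.

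The principal technical point is the chain-level intertwining of $u\cup(-)$ with $\mathrm{Hom}_{\Lambda}(\Phi,M)$ asserted in the second paragraph: one must track carefully the left/right $\Lambda$-structures and the involution $\lambda\mapsto\bar{\lambda}$ on $\Lambda$ in order to pass the scalar action through the adjunction and to produce the correct sign. Once this is verified, the equivalences of (1), (2), (3) are immediate from Corollary \ref{cor_complex_isomorphism}.
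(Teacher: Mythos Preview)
Your argument is correct and is essentially the same as the paper's own proof: both choose a cocycle representative for $u$, form the chain-level cap product map between the free $\Lambda$-complexes, identify the $\mathcal{B}$-coefficient cap and cup products as $M\otimes_{\Lambda}(-)$ and $\mathrm{Hom}_{\Lambda}(-,M)$ applied to this map, and then invoke Corollary~\ref{cor_complex_isomorphism}. The only cosmetic difference is that you explicitly flag the reindexing needed for connectivity and the left/right module bookkeeping, which the paper leaves implicit.
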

\begin{proof}
Choose a cocycle $\hat{u} \in C^{k} (X,Y; \mathcal{O})$ representing $u$ and
define a chain map
\[
\begin{array}{rrcl}
T\colon\!  C_{k+*} (X,Y; \mathbf{\Lambda}) & \rightarrow & C_{*} (X; \mathbf{\Lambda} \otimes \mathcal{O}) \\
 \xi & \mapsto & \xi \cap \hat{u}\, .
\end{array}
\]
Then $C_{\bullet} (X; \mathbf{\Lambda} \otimes \mathcal{O})$ and $C_{\bullet} (X; \mathbf{\Lambda})$ are free $\Lambda$-complexes, $C_{\bullet} (X; \mathbf{\Lambda} \otimes \mathcal{O}) \cong C_{\bullet} (X; \mathbf{\Lambda})$, and $T$ is a $\Lambda$-homomorphism.

Furthermore $T$ induces chain maps
\[
\cap \hat{u}\: C_{k+*} (X,Y; \mathcal{B}) \longrightarrow C_{*} (X; \mathcal{B} \otimes \mathcal{O})
\]
and
\[
\hat{u} \cup\:  C^{*} (X; \mathcal{B}) \longrightarrow C^{k+*} (X, Y; \mathcal{B} \otimes \mathcal{O}),
\]
where $\hat{u} \cup$ equals the adjoint map of $T$. The statements to be verified are then a direct consequence of \eqref{equ_singular_complex} and Corollary \ref{cor_complex_isomorphism}.
\end{proof}

In the following, assume $(X,Y)$ is a $2$-connected space pair homotopy equivalent to a CW pair, and $X$ is connected. Let $k \ge 3$ be an integer.

\begin{theorem}[Thom Isomorphism: Existence]\label{thm_T_ext}
The pair $(X,Y)$ satisfies the Thom isomorphism in formal dimension $k$ if and only if the homotopy fiber of $Y \to X$ is $S^{k-1}$.
\end{theorem}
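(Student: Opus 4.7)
The plan is to replace $Y \to X$ by a Hurewicz fibration $p \: E \to X$ with $E \simeq Y$, whose fiber over any basepoint of $X$ is a model for the homotopy fiber $F$. From the $2$-connectedness of $(X, Y)$ and the long exact sequence of homotopy groups of $F \to E \to X$, I first deduce that $F$ is simply connected and that $\pi_1(E) \to \pi_1(X)$ is an isomorphism. By Whitehead's theorem, the conclusion $F \simeq S^{k-1}$ is then equivalent to $H_*(F; \bZ) \cong H_*(S^{k-1}; \bZ)$.

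For the ``if'' direction, I apply the relative Serre spectral sequence to the pair fibration $(M_p, E) \to X$, whose fiber pair is homotopy equivalent to the disk pair $(D^k, S^{k-1})$. For any local system $\mathcal B$ on $X$,
\[
E^2_{p,q} = H_p(X; \mathcal H_q \otimes \mathcal B), \qquad \mathcal H_q := H_q(D^k, S^{k-1};\bZ),
\]
where $\mathcal H_q$ carries its natural $\pi_1(X)$-monodromy; it vanishes unless $q = k$, in which case $\mathcal H_k = \mathcal O$ is the orientation system. The spectral sequence collapses at $E^2$, yielding $H_{p+k}(X, Y; \mathcal B) \cong H_p(X; \mathcal B \otimes \mathcal O)$, the homological form of the Thom isomorphism, which by Theorem \ref{thm_T_eqv} is equivalent to the cohomological form of Definition \ref{def_Thom}.

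For the ``only if'' direction, I pass to the universal cover $\tilde X$, where the pullback of $\mathcal O$ is trivial and Theorem \ref{thm_T_eqv} rewrites the hypothesis as $H_n(\tilde X, \tilde E; \bZ) \cong H_{n-k}(\tilde X; \bZ)$ for all $n$. Comparing with the relative Serre spectral sequence
\[
E^2_{p,q} = H_p(\tilde X; \tilde H_{q-1}(F)) \Longrightarrow H_{p+q}(\tilde X, \tilde E; \bZ),
\]
the rows $q = 0, 1, 2$ vanish automatically since $F$ is connected and simply connected. At the minimal degree $q_0$ with $\tilde H_{q_0}(F) \neq 0$: when $q_0 < k - 1$, the entry $E^2_{0, q_0+1} = \tilde H_{q_0}(F)$ is isolated on its total-degree diagonal by minimality, and no nontrivial differential in or out can reach it (both source and target rows vanish by the inductive hypothesis), so $E^\infty_{0, q_0+1} = \tilde H_{q_0}(F)$ must equal the abutment $H_{q_0+1-k}(\tilde X) = 0$, a contradiction. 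The same analysis at $q_0 = k-1$ forces $\tilde H_{k-1}(F) \cong H_0(\tilde X) = \bZ$.

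The main obstacle is the high-degree vanishing $\tilde H_q(F) = 0$ for $q \geq k$, where the abutment $H_{q+1-k}(\tilde X)$ need not vanish and potential Tor contributions in $E^2$ obstruct a direct inductive argument. To close this step, I choose $\phi \: S^{k-1} \to F$ realizing a generator of $\pi_{k-1}(F) \cong \tilde H_{k-1}(F)$ (Hurewicz) and compare the relative Serre spectral sequence of $F \to \tilde E \to \tilde X$ with that of the trivial $S^{k-1}$-fibration $\tilde X \times S^{k-1} \to \tilde X$. Both spectral sequences share the same abutment $H_{*-k}(\tilde X)$ -- the target one by the already-proven forward direction, the source one by the assumed Thom isomorphism -- and the map induced by $\phi$ is an isomorphism on the two nonzero rows ($q = 0$ and $q = k-1$) of the target $E^2$-page. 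A Zeeman-type comparison argument then forces $\phi$ to induce an isomorphism on $H_*$ in all degrees, so $\phi$ is a weak equivalence by Whitehead, completing the proof.
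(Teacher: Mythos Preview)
Your overall strategy---replacing $Y\to X$ by a fibration and, for the reverse implication, passing to the universal cover and analyzing a relative Serre spectral sequence---parallels the paper's. The ``if'' direction via spectral-sequence collapse is a legitimate alternative to the paper's Mayer--Vietoris/compact-exhaustion construction (Lemma~\ref{lem_Thom_sufficient}), though you should make explicit that the class $u\in H^k(X,Y;\mathcal O)$ is the one restricting to the preferred generator of each $H^k(CF_x,F_x;\mathcal O_x)$ and that the edge isomorphism coincides with $\cap\,u$; Definition~\ref{def_Thom} and Theorem~\ref{thm_T_eqv} are statements about a specific class, not about an abstract isomorphism.

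The genuine gap is in the ``only if'' direction. Your low-degree argument correctly gives $\tilde H_q(F)=0$ for $q<k-1$ and $\tilde H_{k-1}(F)\cong\mathbb{Z}$. For the high-degree vanishing you invoke a Zeeman-type comparison with the trivial $S^{k-1}$-fibration via ``the map induced by $\phi$.'' But $\phi\colon S^{k-1}\to F$ is only a map of fibers over the basepoint; any comparison theorem needs a map of spectral sequences, hence a map of fibrations $\tilde X\times S^{k-1}\to\tilde E$ over $\tilde X$ extending $\phi$. The obstructions to such an extension lie in groups $H^{n+1}(\tilde X\times S^{k-1},\{x_0\}\times S^{k-1};\pi_n(F))$, and for $n\ge k-1$ the coefficients $\pi_n(F)$ are exactly what you have not yet controlled---so the argument is circular. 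The paper (Lemma~\ref{lem_Thom_necessary}) avoids this by citing Browder \cite[lem.~I.4.3]{Browder2} or Klein \cite[thm.~B]{Klein1}: one works with the single spectral sequence for $(\tilde X,\tilde Y)$ and exploits the $H^*(\tilde X)$-module structure together with the fact that the Thom class $p^*u$ restricts to a generator of $H^k(CF,F)$, which forces the higher fiber cohomology to vanish without needing an auxiliary map of fibrations.
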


\begin{remark}
Theorem \ref{thm_T_ext} is not really new. When  $X$ is $1$-connected, the theorem amounts to \cite[lem.~I.4.3]{Browder2} and \cite[thm.~B]{Klein1} (each proved by different methods). When $X$ is finitely dominated but not necessarily $1$-connected, a proof is sketched in \cite[prop.~3.10]{Ranicki}. For the convenience of the reader, we shall present in Appendix \ref{apd_Thom} a detailed proof in the general case.
\end{remark}

%%%------------------------------------------------------------------------------------------------------------------------------------------------------
%%%------------------------------------------------------------------------------------------------------------------------------------------------------
%%%------------------------------------------------------------------------------------------------------------------------------------------------------
\section{Poincar\'{e} Duality}\label{sec_dual}
In this section, we prove Theorems \ref{thm_interior_duality}, \ref{thm_P_eqv} and \ref{thm_P_crt}, and Corollaries \ref{cor_Wall} and \ref{cor_pair}.

For all but the proof of Theorem  \ref{thm_interior_duality}, the key tool will be the following result which translates between
Poincar\'e duality and the Thom isomorphism.

\begin{proposition}[Duality and the Thom Isomorphism]\label{prop_dual_Thom}
Let $(M; N_{1}, N_{2})$ be a Poincar\'{e} triad with dualizing system $\mathcal{O}_{1}$ and fundamental class
\[
\eta \in H_{n+k} (M, N_{1} \cup N_{2}; \mathcal{O}_{1}).
\]
Suppose $\mathcal{O}_{2}$ is an orientation system on $M$ and $u \in H^{k} (M, N_{1}; \mathcal{O}_{2})$. Let $\mathcal{O}_{3} = \mathcal{O}_{1} \otimes \mathcal{O}_{2}$ and $\xi = \eta \cap u \in H_{n} (M, N_{2}; \mathcal{O}_{3})$.

Then the following statements are equivalent.
\begin{enumerate}
\item The following are isomorphisms for all $*$ and all local systems $\mathcal{B}$:
\[
u \cup: \ \ H^{*} (M; \mathcal{B}) \overset{\cong}{\longrightarrow} H^{k+*} (M, N_{1}; \mathcal{O}_{2} \otimes \mathcal{B}).
\]

\item The following are isomorphisms for all $*$ and all local systems $\mathcal{B}$:
\[
\xi \cap: \ \ H^{*} (M; \mathcal{B}) \overset{\cong}{\longrightarrow} H_{n-*} (M, N_{2}; \mathcal{O}_{3} \otimes \mathcal{B}).
\]

\item The following are isomorphisms for all $*$ and all local systems $\mathcal{B}$:
\[
\cap u: \ \ H_{k+*} (M, N_{1}; \mathcal{B}) \overset{\cong}{\longrightarrow} H_{*} (M; \mathcal{B} \otimes \mathcal{O}_{2}).
\]

\item The following are isomorphisms for all $*$ and all local systems $\mathcal{B}$:
\[
\xi \cap: \ \ H^{*} (M, N_{2}; \mathcal{B}) \overset{\cong}{\longrightarrow} H_{n-*} (M; \mathcal{O}_{3} \otimes \mathcal{B}).
\]
\end{enumerate}
\end{proposition}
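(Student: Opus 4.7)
The plan is to reduce the claim to three independent equivalences whose combination gives all six pairwise equivalences among (1)--(4): namely (1)$\Leftrightarrow$(3), (1)$\Leftrightarrow$(2), and (3)$\Leftrightarrow$(4). The first is an immediate application of Theorem \ref{thm_T_eqv} to the pair $(M, N_{1})$, the orientation system $\mathcal{O}_{2}$, and the class $u$. The other two will be obtained by factoring the cap-product map $\xi\cap$ through Poincar\'e duality isomorphisms supplied by the triad structure.

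The algebraic engine is the chain-level identity $(\eta \cap u) \cap \alpha = \eta \cap (u \cup \alpha)$, valid for any cocycle $\hat u$ and cycle $\hat \eta$ representing $u$ and $\eta$. For $\alpha \in H^{*}(M;\mathcal{B})$ this yields the factorization
\[
H^{*}(M;\mathcal{B}) \xrightarrow{\;u\cup\;} H^{k+*}(M, N_{1}; \mathcal{O}_{2}\otimes\mathcal{B}) \xrightarrow{\;\eta\cap\;} H_{n-*}(M, N_{2}; \mathcal{O}_{3}\otimes\mathcal{B})
\]
whose composition is the duality map in (2). For $\alpha \in H^{*}(M, N_{2};\mathcal{B})$, combining the same identity with graded commutativity of cup products gives (up to a sign depending on $k$ and $*$) the factorization
\[
H^{*}(M, N_{2};\mathcal{B}) \xrightarrow{\;\eta\cap\;} H_{n+k-*}(M, N_{1}; \mathcal{O}_{1}\otimes\mathcal{B}) \xrightarrow{\;\cap u\;} H_{n-*}(M; \mathcal{O}_{3}\otimes\mathcal{B})
\]
whose composition is the duality map in (4) and whose second arrow, after reindexing $\mathcal{C}:=\mathcal{O}_{1}\otimes\mathcal{B}$, coincides with the cap map in (3).

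The essential ingredient is the pair of \emph{triad duality isomorphisms}
\[
\eta\cap\colon H^{*}(M, N_{i};\mathcal{B}) \xrightarrow{\;\cong\;} H_{n+k-*}(M, N_{3-i}; \mathcal{O}_{1}\otimes\mathcal{B}), \qquad i=1,2,
\]
which provide the isomorphisms needed in the two factorizations above. These follow from the definition of a Poincar\'e triad by a five-lemma argument: one considers the long exact cohomology sequence of the triple $(M, N_{1}\cup N_{2}, N_{i})$, which by excision (using the excisive assumption on $\{N_{1}, N_{2}\}$) takes the form
\[
\cdots \to H^{*}(M, N_{1}\cup N_{2};\mathcal{B}) \to H^{*}(M, N_{i};\mathcal{B}) \to H^{*}(N_{3-i}, N_{1}\cap N_{2};\mathcal{B}) \to \cdots,
\]
and maps it to the homology long exact sequence of the pair $(M, N_{3-i})$ via cap product with $\eta$ (and with $\partial_{*}\eta$ on the boundary terms). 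The triad definition forces four of the resulting verticals to be the Poincar\'e pair duality isomorphisms for $(M, N_{1}\cup N_{2})$ and for $(N_{3-i}, N_{1}\cap N_{2})$, so the middle vertical is an isomorphism by the five lemma. Granted these two triad dualities, the factorizations immediately yield (1)$\Leftrightarrow$(2) and (3)$\Leftrightarrow$(4).

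The main obstacle I anticipate is verifying commutativity of the five-lemma ladder with the correct signs, which requires careful bookkeeping at the chain level: one fixes compatible representatives $\hat \eta$ and $\hat u$, works with the cap--cup identities on singular chains, and passes from chain equivalence to the stated homology isomorphisms via Corollary \ref{cor_complex_isomorphism}. Everything else reduces to formal manipulation of cap and cup products.
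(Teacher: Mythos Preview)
Your proposal is correct and matches the paper's proof essentially line for line: the paper also reduces to (1)$\Leftrightarrow$(3) via Theorem \ref{thm_T_eqv}, then proves (1)$\Leftrightarrow$(2) and (3)$\Leftrightarrow$(4) by exactly the two factorizations you wrote down, invoking the triad duality isomorphisms $\eta\cap\colon H^{*}(M,N_i;\mathcal{B})\xrightarrow{\cong} H_{n+k-*}(M,N_{3-i};\mathcal{O}_1\otimes\mathcal{B})$ and the skew-commutativity of cup products (the paper's Proposition \ref{prop_cup}) for the sign in the second factorization. The one place you are more explicit than the paper is in sketching the five-lemma derivation of the triad duality isomorphisms from Definition \ref{def_Poincare_triad}; the paper simply asserts ``since $(M;N_1,N_2)$ is a Poincar\'e triad, we know that $\eta\cap$ is an isomorphism,'' so your added detail is welcome but not a point of divergence.
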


\begin{proof}
By Theorem \ref{thm_T_eqv}, we already have the equivalence $(1) \Leftrightarrow (3)$. It suffices to prove $(1) \Leftrightarrow (2)$ and $(3) \Leftrightarrow (4)$.

The following diagram commutes:
\[
\xymatrix{
  H^{*} (M; \mathcal{B}) \ar[dr]_{\xi \cap} \ar[r]^-{u \cup}
                & H^{k+*} (M, N_{1}; \mathcal{O}_{2} \otimes \mathcal{B}) \ar[d]^{\eta \cap}  \\
                & H_{n-*} (M, N_{2}; \mathcal{O}_{3} \otimes \mathcal{B}) \, .            }
\]
Here we have used the fact that
\[
\eta \cap (u \cup a) = (\eta \cap u) \cap a = \xi \cap a.
\]
Since $(M; N_{1}, N_{2})$ is a Poincar\'{e} triad, we know that $\eta \cap$ is an isomorphism, which implies $(1) \Leftrightarrow (2)$.

It remains to prove $(3) \Leftrightarrow (4)$. Consider the following commutative diagram.
\[
\xymatrix{
  H^{*} (M, N_{2}; \mathcal{B}) \ar[dr]_{T} \ar[r]^-{\eta \cap}
                & H_{n+k-*} (M, N_{1}; \mathcal{O}_{1} \otimes \mathcal{B}) \ar[d]^{\cap u}  \\
                & H_{n-*} (M; \mathcal{O}_{1} \otimes \mathcal{B} \otimes \mathcal{O}_{2}) \, ,            }
\]
in which $T(a) := (\eta \cap a) \cap u$. By the skew-commutativity of cup products (Proposition \ref{prop_cup}), we infer
\[
(\eta \cap a) \cap u = \eta \cap (a \cup u) = \pm \eta \cap (u \cup a) = \pm (\eta \cap u) \cap a = \pm \xi \cap a,
\]
where $a \cup u \in H^{k+*} (M, N_{1} \cup N_{2}; \mathcal{B} \otimes \mathcal{O}_{2})$ and $u \cup a \in H^{k+*} (M, N_{1} \cup N_{2}; \mathcal{O}_{2} \otimes \mathcal{B})$. We have identified $\mathcal{B} \otimes \mathcal{O}_{2}$ with $\mathcal{O}_{2} \otimes \mathcal{B}$.

Consequently, $T$ is an isomorphism if and only if
\[
\xi \cap\: H^{*} (M, N_{2}; \mathcal{B}) \rightarrow H_{n-*} (M; \mathcal{O}_{3} \otimes \mathcal{B})
\]
is an isomorphism. Since $\eta \cap$ is an isomorphism, we infer that $\xi \cap$ is an isomorphism if and only if
\[
\cap u\: H_{n+k-*} (M, N_{1}; \mathcal{O}_{1} \otimes \mathcal{B}) \rightarrow H_{n-*} (M; \mathcal{O}_{1} \otimes \mathcal{B} \otimes \mathcal{O}_{2})
\]
is an isomorphism. For a local system $\mathcal{A}$, there is a canonical isomorphism  $\mathcal{A} \cong \mathcal{O}_{1} \otimes (\mathcal{O}_{1}^{*} \otimes \mathcal{A})$. Thus, when $\mathcal{B}$ runs over all local systems, so does $\mathcal{O}_{1} \otimes \mathcal{B}$. Hence, $(3) \Leftrightarrow (4)$.
\end{proof}

\subsection{Digression on thickenings}
The proof of Theorem \ref{thm_P_eqv} will make use of thickenings in the sense of Wall \cite{Wall66}.
Let $\mathbb{H}^{j}$ denote the upper half space in $\Bbb R^j$. This is a manifold with boundary $\Bbb R^{j-1}$.
By an  {\it $m$-thickening} of a finite CW pair $(X,Y)$, we mean a manifold triad $(M;N_1,N_2)$
and a homotopy equivalence of pairs
$$
(X,Y) \overset{\sim}\to (M,N_2)\, ,
$$
where,
\begin{enumerate}
\item $M$ is a compact $m$-manifold with boundary $\partial M$;
\item $\partial M =  N_1 \cup_{\partial} N_2$ a codimension one splitting, i.e.,
$N_1$ and $N_2$ are codimension zero submanifolds of $\partial M$ with common boundary $\partial N_1 = \partial = \partial N_2$.
\end{enumerate}

Let $\Bbb H^m := [0, +\infty) \times \mathbb{R}^{m-1}$ be the upper
half space in $\Bbb R^m$. Assume that $m$ is sufficiently large with respect to the dimension of $X$.
Then
there exists an $m$-thickening $(M;N_1,N_2)$ of $(X,Y)$ in which
$M \subset \Bbb H^m$
and $\partial M \cap \Bbb R^{m-1} = N_2$, where
the latter intersection is transversal (cf.~\cite{Wall66} in the absolute case $Y=\emptyset$; the relative case similarly follows by first 
constructing a thickening $Y$ inside $\Bbb R^{m-1}$ and then thickening inside $\Bbb R^m$ the cells in $X$ that are attached to $Y$).

\begin{proof}[Proof of Theorem \ref{thm_P_eqv}]
We know that $(X,Y)$ is a finitely dominated pair. Taking the product with $S^{3}$ if necessary, we may assume $(X,Y)$ is a finite CW pair (cf.~ Lemma \ref{lem_rd_space} and Lemma \ref{lem_rd_dual}).

As above, there exists an $(n+k)$-thickening $(M;N_1;N_2)$ of $(X,Y)$ with $M \subset \mathbb{H}^{n+k}$, provided that $k$ is sufficiently large.
In particular, we have a homotopy equivalence of pairs
\[
(X,Y) \overset\sim\to (M, N_{2}).
\]
The orientation system $\mathcal{O}$ on $X$ corresponds to an orientation system $\mathcal{O}_{3}$ on $M$, and $[X] \in H_{n} (X,Y; \mathcal{O})$ corresponds to a homology class $\xi \in H_{n} (M, N_{2}; \mathcal{O}_{3})$. Clearly, $(M; N_{1}, N_{2})$ is a Poincar\'{e} triad. We can therefore construct $\mathcal{O}_{1}$, $\mathcal{O}_{2}$, $\eta$ and $u$ as those in Proposition \ref{prop_dual_Thom}. So the conclusion follows from that proposition.
\end{proof}

\begin{remark} In contrast with Theorem \ref{thm_T_eqv},
the proof we gave of Theorem \ref{thm_P_eqv} is not purely algebraic:
we relied on the existence of thickenings of finite CW pairs,
a technique arising in manifold theory. Note that Poincar\'{e} duality  for a manifold triad $(M; N_{1}, N_{2})$ plays a key role in the proof though we just refer to it in one sentence.
\end{remark}

Wall's Conjecture (Corollary \ref{cor_Wall}) and Corollary \ref{cor_pair} will easily follow from Theorem \ref{thm_P_eqv} and the following result.

\begin{lemma}\label{lem_boundary1}
Suppose $\mathcal{O}$ is an orientation system on $X$ and
$[X] \in H_{n} (X,Y; \mathcal{O})$ is a homology class. Suppose \eqref{def_Poincare_pair_1} and \eqref{def_Poincare_pair_2} hold. Then the following homomorphisms are isomorphisms for all $m$ and all local systems $\mathcal{B}$ on $X$:
\[
\partial_{*} [X] \cap\: H^{m} (Y; \mathcal{B}|_{Y}) \overset{\cong}{\longrightarrow} H_{n-1-m} (Y; \mathcal{O}|_{Y} \otimes \mathcal{B}|_{Y}).
\]
\end{lemma}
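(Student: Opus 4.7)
\textbf{Proof proposal for Lemma \ref{lem_boundary1}.} The strategy is to compare the cohomology long exact sequence of the pair $(X,Y)$ with the homology long exact sequence via cap product with $[X]$, and deduce the claim by the five lemma. Fix a local system $\mathcal{B}$ on $X$; its restriction $\mathcal{B}|_{Y}$ then automatically serves as coefficients on $Y$, and $(\mathcal{O}\otimes\mathcal{B})|_{Y}=\mathcal{O}|_{Y}\otimes\mathcal{B}|_{Y}$. The two long exact sequences to be compared are
\[
\cdots \to H^{m-1}(Y;\mathcal{B}|_{Y}) \to H^{m}(X,Y;\mathcal{B}) \xrightarrow{j^{*}} H^{m}(X;\mathcal{B}) \xrightarrow{i^{*}} H^{m}(Y;\mathcal{B}|_{Y}) \to \cdots
\]
and
\[
\cdots \to H_{n-m}(Y;\mathcal{O}|_{Y}\otimes\mathcal{B}|_{Y}) \to H_{n-m}(X;\mathcal{O}\otimes\mathcal{B}) \xrightarrow{j_{*}} H_{n-m}(X,Y;\mathcal{O}\otimes\mathcal{B}) \xrightarrow{\partial_{*}} H_{n-m-1}(Y;\mathcal{O}|_{Y}\otimes\mathcal{B}|_{Y}) \to \cdots
\]

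The plan is to form a ladder connecting these sequences by the three vertical maps: cap product with $[X]$ on $H^{m}(X,Y;\mathcal{B})$ and on $H^{m}(X;\mathcal{B})$, and cap product with $\partial_{*}[X]$ on $H^{m}(Y;\mathcal{B}|_{Y})$. I would verify the squares commute (up to sign) by a standard chain-level computation: the square involving $j^{*}$ follows from naturality of cap product for a lift of a cycle representing $[X]$, while the square involving $i^{*}$ and $\partial_{*}$ reduces to the Leibniz identity $\partial(c \cap \varphi) = \pm\, \partial c \cap i^{*}\varphi$ applied to a lift $c \in C_{n}(X;\mathcal{O})$ of a cycle representing $[X]$ and a cocycle $\varphi$ representing a class in $H^{m}(X;\mathcal{B})$ (recall that $\partial c$ automatically sits in $C_{n-1}(Y;\mathcal{O}|_{Y})$ and represents $\partial_{*}[X]$).

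Once the ladder is in place, hypotheses \eqref{def_Poincare_pair_1} and \eqref{def_Poincare_pair_2} assert precisely that the vertical maps at $H^{m}(X;\mathcal{B})$ and at $H^{m}(X,Y;\mathcal{B})$ are isomorphisms for every $m$ and every $\mathcal{B}$. Applying the five lemma at each $m$ to the five consecutive columns
\[
H^{m-1}(X;\mathcal{B}),\ H^{m-1}(Y;\mathcal{B}|_{Y}),\ H^{m}(X,Y;\mathcal{B}),\ H^{m}(X;\mathcal{B}),\ H^{m}(Y;\mathcal{B}|_{Y})
\]
and the aligned pieces of the homology sequence then forces the middle column to be an isomorphism. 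Since $m$ was arbitrary, this yields the desired isomorphism $\partial_{*}[X]\cap : H^{m}(Y;\mathcal{B}|_{Y}) \xrightarrow{\cong} H_{n-1-m}(Y;\mathcal{O}|_{Y}\otimes\mathcal{B}|_{Y})$ for every $m$ and every $\mathcal{B}$.

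The only mildly delicate point is the bookkeeping of signs in the square involving $\partial_{*}$, but uniform signs do not affect the applicability of the five lemma, so this is routine and not a genuine obstacle. It is worth emphasizing that the conclusion only concerns local systems on $Y$ of the form $\mathcal{B}|_{Y}$, which is exactly what the naturality of the ladder provides; this is the essential reason why \eqref{def_Poincare_pair_1}+\eqref{def_Poincare_pair_2} does not suffice to give the full boundary duality \eqref{def_Poincare_pair_3} (whose failure is precisely the content of Theorem~\ref{thm_interior_duality}).
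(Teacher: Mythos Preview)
Your approach is exactly the one the paper uses: build the cap-product ladder between the long exact sequences of $(X,Y)$ and apply the Five Lemma. The paper's proof is a single sentence to this effect.

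One small slip: the five columns you list are centered at $H^{m}(X,Y;\mathcal{B})$, which is already an isomorphism by \eqref{def_Poincare_pair_2}; moreover, two of your four outer columns are $Y$-columns, which are precisely the unknown ones. You want instead to center the five-term window at the $Y$-column, e.g.
\[
H^{m}(X,Y;\mathcal{B}),\ H^{m}(X;\mathcal{B}),\ H^{m}(Y;\mathcal{B}|_{Y}),\ H^{m+1}(X,Y;\mathcal{B}),\ H^{m+1}(X;\mathcal{B}),
\]
so that all four outer verticals are covered by \eqref{def_Poincare_pair_1} and \eqref{def_Poincare_pair_2}. With that correction your argument is complete.
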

\begin{proof}
Consider the homology and cohomology long exact sequences of $(X,Y)$ with coefficients $\mathcal{B}$ and $\mathcal{O} \otimes \mathcal{B}$. Then the conclusion follows from the Five Lemma.
\end{proof}

\begin{proof}[Proof of Corollary \ref{cor_pair}]
This follows directly from Theorem \ref{thm_P_eqv} and Lemma \ref{lem_boundary1}.
\end{proof}

\begin{lemma}\label{lem_dual_orientation}
Suppose $(A,B)$ is a path-connected Poincar\'{e} pair (in which $B$ may be empty) having formal dimension $d$ and dualizing system $\mathcal{O}$. Suppose $\mathcal{O}_{1}$ is another orientation system on $A$ such that $H_{d} (A,B; \mathcal{O}_{1}) \neq 0$. Then there is an isomorphism $\mathcal{O}_{1} \cong \mathcal{O}$.
\end{lemma}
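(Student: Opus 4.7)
The plan is to apply the Poincar\'{e} duality isomorphism \eqref{def_Poincare_pair_2} of the pair $(A,B)$ with a carefully chosen local system of coefficients in order to transfer the nonvanishing of $H_{d}(A,B;\mathcal{O}_{1})$ into a nonvanishing statement about $H^{0}$ of an infinite cyclic local system, and then read off triviality from this.

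More precisely, let $\mathcal{O}^{\ast}$ denote the dual orientation system, so that $\mathcal{O}\otimes \mathcal{O}^{\ast}\cong \mathbb{Z}$ canonically as local systems. Set $\mathcal{B} := \mathcal{O}^{\ast}\otimes \mathcal{O}_{1}$, which is again a local system of infinite cyclic groups on $A$. Applying the cap product isomorphism \eqref{def_Poincare_pair_1} with $Y=B$, $n=d$, $m=0$ and this choice of $\mathcal{B}$ yields
\[
[A]\cap\colon\! H^{0}(A;\mathcal{O}^{\ast}\otimes \mathcal{O}_{1})\overset{\cong}{\longrightarrow} H_{d}(A,B;\mathcal{O}\otimes \mathcal{O}^{\ast}\otimes \mathcal{O}_{1})\cong H_{d}(A,B;\mathcal{O}_{1}).
\]
By hypothesis the right-hand side is nonzero, so $H^{0}(A;\mathcal{O}^{\ast}\otimes \mathcal{O}_{1})\neq 0$.

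The orientation system $\mathcal{O}^{\ast}\otimes \mathcal{O}_{1}$ is classified by a homomorphism $\rho\colon\! \pi_{1}(A,a_{0})\to \mathrm{Aut}(\mathbb{Z})=\{\pm 1\}$, and since $A$ is path-connected its $H^{0}$ computes the $\pi_{1}(A)$-invariants of $\mathbb{Z}$ under this action. If $\rho$ were nontrivial, then no nonzero integer could be fixed, contradicting $H^{0}\neq 0$. Hence $\rho$ is trivial, $\mathcal{O}^{\ast}\otimes \mathcal{O}_{1}$ is isomorphic to the constant system $\mathbb{Z}$, and tensoring with $\mathcal{O}$ gives the desired isomorphism $\mathcal{O}_{1}\cong \mathcal{O}$.

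I expect no serious obstacle; the only point that needs a little care is justifying the identification $H^{0}(A;\mathcal{O}^{\ast}\otimes \mathcal{O}_{1})=(\mathcal{O}^{\ast}\otimes \mathcal{O}_{1})^{\pi_{1}(A)}$ and the claim that an infinite cyclic local system is trivial precisely when this invariant group is nonzero, both of which are standard for orientation systems on path-connected spaces. The choice of $\mathcal{B}=\mathcal{O}^{\ast}\otimes \mathcal{O}_{1}$ is the key idea; once made, the argument is immediate from Poincar\'{e} duality.
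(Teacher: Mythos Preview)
Your proof is correct and follows essentially the same argument as the paper: apply the duality isomorphism \eqref{def_Poincare_pair_1} with $\mathcal{B}=\mathcal{O}^{\ast}\otimes\mathcal{O}_{1}$ to obtain $H^{0}(A;\mathcal{O}^{\ast}\otimes\mathcal{O}_{1})\cong H_{d}(A,B;\mathcal{O}_{1})\neq 0$, then observe that a nontrivial orientation system on a path-connected space has vanishing $H^{0}$. (Note that your opening sentence cites \eqref{def_Poincare_pair_2} while the body correctly invokes \eqref{def_Poincare_pair_1}; you may wish to align these.)
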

\begin{proof}
The proof is similar to that of Theorem \ref{thm_T_unq}. By the duality, we have
\[
H^{0} (A; \mathcal{O}^{*} \otimes \mathcal{O}_{1}) \cong H_{d} (A,B; \mathcal{O} \otimes \mathcal{O}^{*} \otimes \mathcal{O}_{1}) = H_{d} (A,B; \mathcal{O}_{1}) \neq 0.
\]
If $\mathcal{O}_{1} \ncong \mathcal{O}$, then $H^{0} (A; \mathcal{O}^{*} \otimes \mathcal{O}_{1}) = 0$, which leads to a contradiction.
\end{proof}

\begin{lemma}\label{lem_boundary2}
Let $(X, Y)$ be a space pair such that $Y$ is a Poincar\'{e} space of formal dimension $n-1$. Suppose $\mathcal{O}$ is an orientation system on $X$ and $[X] \in H_{n} (X,Y; \mathcal{O})$ is a homology class. Suppose \eqref{def_Poincare_pair_1} and \eqref{def_Poincare_pair_2} hold. Then $(X,Y)$ is a Poincar\'{e} pair with dualizing system $\mathcal{O}$ and  fundamental class $[X]$.
\end{lemma}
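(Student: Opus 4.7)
The goal is to verify condition \eqref{def_Poincare_pair_3} of Definition \ref{def_Poincare_pair}, namely that for every local system $\mathcal{G}$ on $Y$ and every $m$, cap product with $\partial_{*}[X]$ induces an isomorphism $H^{m}(Y;\mathcal{G}) \overset{\cong}{\to} H_{n-1-m}(Y; \mathcal{O}|_{Y} \otimes \mathcal{G})$. Note that Lemma \ref{lem_boundary1} already gives this for local systems on $Y$ that arise by restriction of local systems on $X$; the difficulty is that a general local system on $Y$ need not extend to $X$. The plan is to bypass this extension question by leveraging the hypothesis that $Y$ is \emph{already} a Poincar\'e space.

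First, I would work one path-component of $Y$ at a time. Fix a component $Z \subseteq Y$. By hypothesis $Z$ is a Poincar\'e space of formal dimension $n-1$, so there exists a dualizing orientation system $\mathcal{O}_{Z}$ on $Z$ and a fundamental class $[Z] \in H_{n-1}(Z;\mathcal{O}_{Z})$. I apply Lemma \ref{lem_boundary1} with $\mathcal{B} = \mathbb{Z}$; summing over the components of $Y$ and restricting to $Z$, this yields an isomorphism
\[
\partial_{*}[X]|_{Z} \cap\: H^{0}(Z;\mathbb{Z}) \overset{\cong}{\longrightarrow} H_{n-1}(Z; \mathcal{O}|_{Z}).
\]
In particular $H_{n-1}(Z;\mathcal{O}|_{Z}) \cong \mathbb{Z}$ is nonzero, so Lemma \ref{lem_dual_orientation} applies to the Poincar\'e space $Z$ and produces an isomorphism $\mathcal{O}|_{Z} \cong \mathcal{O}_{Z}$ of orientation systems.

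Using this identification, $\partial_{*}[X]|_{Z}$ becomes a class in $H_{n-1}(Z;\mathcal{O}_{Z}) \cong \mathbb{Z}$ whose cap product with the generator $1 \in H^{0}(Z;\mathbb{Z}) \cong \mathbb{Z}$ is itself. Since the displayed map above is an isomorphism, $\partial_{*}[X]|_{Z}$ must be a generator of $H_{n-1}(Z;\mathcal{O}_{Z})$, i.e., $\pm [Z]$. Either sign gives a fundamental class for $Z$, so $\partial_{*}[X]|_{Z}$ is itself a fundamental class. Since this holds for every component $Z$, the class $\partial_{*}[X]$ is a fundamental class for the Poincar\'e space $Y$ with dualizing system $\mathcal{O}|_{Y}$, and therefore cap product with $\partial_{*}[X]$ induces isomorphisms
\[
\partial_{*}[X] \cap\: H^{m}(Y;\mathcal{G}) \overset{\cong}{\longrightarrow} H_{n-1-m}(Y;\mathcal{O}|_{Y} \otimes \mathcal{G})
\]
for \emph{every} local system $\mathcal{G}$ on $Y$ and every $m$, establishing \eqref{def_Poincare_pair_3}.

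The main obstacle is conceptual rather than computational: one must realize that condition \eqref{def_Poincare_pair_3} for arbitrary $\mathcal{G}$ cannot be deduced directly from \eqref{def_Poincare_pair_1}--\eqref{def_Poincare_pair_2} (a local system on $Y$ need not extend), and the only way through is to use the hypothesis that $Y$ is itself a Poincar\'e space, which then supplies duality for arbitrary $\mathcal{G}$ once $\partial_{*}[X]$ is shown to be a fundamental class. The reduction to this via Lemmas \ref{lem_boundary1} and \ref{lem_dual_orientation} is the crux.
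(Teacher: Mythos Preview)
Your proof is correct and follows essentially the same route as the paper: apply Lemma~\ref{lem_boundary1} with $\mathcal{B}=\mathbb{Z}$ to see $H_{n-1}(Z;\mathcal{O}|_{Z})\neq 0$ for each component $Z$, invoke Lemma~\ref{lem_dual_orientation} to identify $\mathcal{O}|_{Z}$ with the dualizing system of $Z$, and conclude that $\partial_{*}[X]$ is a fundamental class for $Y$. Your write-up is simply more explicit about the last step (why $\partial_{*}[X]|_{Z}$ is a generator), which the paper leaves implicit.
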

\begin{proof}
By Lemma \ref{lem_boundary1}, we obtain the following isomorphism
\[
\partial_{*} [X] \cap\: H^{0} (Y; \mathbb{Z}) \overset{\cong}{\longrightarrow} H_{n-1} (Y; \mathcal{O}|_{Y}).
\]
Therefore, for each path-component $Z$ of $Y$, we have $H_{n-1} (Z; \mathcal{O}|_{Z}) \neq 0$. By Lemma \ref{lem_dual_orientation}, we infer that $\mathcal{O}|_{Y}$ is the dualizing system of $Y$. Then $\partial_{*} [X]$ is a fundamental class of $Y$.
\end{proof}

\begin{proof}[Proof of Corollary \ref{cor_Wall}] This
follows from Theorem \ref{thm_P_eqv} and Lemma \ref{lem_boundary2}.
\end{proof}

We now turn to the proof of Theorem \ref{thm_P_crt}.
\medskip

Suppose $(X,Y_{2})$ is a CW pair satisfying the Thom isomorphism in formal dimension $k$. Similar to the proof of Theorem \ref{thm_T_ext} (cf. Appendix \ref{apd_Thom}), convert $(X,Y_{2})$ to a pair of fibrations
\[
 (CE, E) \rightarrow X,
\]
in which $(CE, E) \simeq (X,Y_{2})$. The fiber over $x \in X$ is identified with the pair $(CF_{x}, F_{x})$, where $CF_{x}$ is contractible. Suppose $u \in H^{k} (CE, E; \mathcal{O}_{2})$ is a Thom class.

\begin{lemma}\label{lem_fiber_Thom}
For each $x \in X$, the restriction of $u$
\[
u|_{CF_{x}} \in H^{k} (CF_{x}, F_{x}; \mathcal{O}_{2,x}) \cong \mathbb{Z}
\]
is a generator of $H^{k} (CF_{x}, F_{x}; \mathcal{O}_{2,x})$.
\end{lemma}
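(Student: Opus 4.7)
The plan is to use the Leray--Serre spectral sequence of $p\:(CE, E)\to X$ to extract the fiber cohomology from the global Thom isomorphism, and then identify $u|_{CF_x}$ with the generator via the edge map. I carry this out in two phases: determine the fiber (co)homology first, then pin down $u|_{CF_x}$.

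First, I would run the \emph{homology} Leray--Serre spectral sequence for $p$ with coefficients $\mathcal{O}_2 \otimes \mathbf{\Lambda}$:
\[
E^2_{p,q} = H_p(X; \mathcal{H}_q) \Longrightarrow H_{p+q}(CE, E; \mathcal{O}_2 \otimes \mathbf{\Lambda}),
\]
where $\mathcal{H}_q(x) = H_q(CF_x, F_x; \mathcal{O}_{2,x} \otimes \Lambda)$; since $CF_x$ is contractible, the restriction of $\mathcal{O}_2 \otimes \mathbf{\Lambda}$ to $CF_x$ is constant, so this stalk is $H_q(CF_x, F_x; \mathbb{Z}) \otimes \Lambda$. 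The homology form of the Thom isomorphism (Theorem \ref{thm_T_eqv}) makes the abutment vanish for $p+q < k$ and identifies it with $H_{p+q-k}(X; \mathbf{\Lambda})$ otherwise.

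Next I would inductively show $H_q(CF_x, F_x; \mathbb{Z}) = 0$ for $0 \le q < k$. The base case $q = 0$ follows because for $k > 0$ the Thom isomorphism forces $\pi_0(Y_2) \to \pi_0(X)$ to be surjective, so $F_x$ is non-empty and $H_0(CF_x, F_x; -) = 0$. In the inductive step, the vanishing of the rows below $q$ collapses all contributions in total degree $q$ to $E^\infty_{0,q} = E^2_{0,q} = H_0(X; \mathcal{H}_q)$, which must vanish. The key observation is that the $\pi_1(X)$-action on $H_q(CF_x, F_x; \mathbb{Z}) \otimes \Lambda$ is the diagonal of monodromy and left multiplication (twisted by the orientation character), so the coinvariants $H_0(X; \mathcal{H}_q)$ are canonically isomorphic to $H_q(CF_x, F_x; \mathbb{Z})$ up to an orientation twist---vanishing of the former is equivalent to vanishing of $H_q(CF_x, F_x; \mathbb{Z})$. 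A parallel analysis in degree $k$ yields $H_k(CF_x, F_x; \mathbb{Z}) \cong \mathbb{Z}$, and the universal coefficient theorem then gives $H^q(CF_x, F_x; \mathbb{Z}) = 0$ for $q < k$ together with $H^k(CF_x, F_x; \mathbb{Z}) \cong \mathbb{Z}$.

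Second, I would run the cohomology Leray--Serre spectral sequence for $p$ with $\mathcal{O}_2$-coefficients, now with $\mathcal{H}^q = 0$ for $q < k$ in hand. The sequence degenerates in the relevant range, producing an edge isomorphism
\[
H^k(CE, E; \mathcal{O}_2) \overset{\cong}{\longrightarrow} H^0(X; \mathcal{H}^k).
\]
Since the left-hand side is $\mathbb{Z}$ generated by $u$ (Thom isomorphism) and $\mathcal{H}^k$ is a local system of infinite cyclic groups, $\mathcal{H}^k$ must be the trivial local system $\underline{\mathbb{Z}}$ on $X$; the edge map sends $u$ to the global section whose value at each $x$ is $u|_{CF_x}$. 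This section is a generator of $H^0(X; \underline{\mathbb{Z}}) = \mathbb{Z}$, so $u|_{CF_x}$ generates $H^k(CF_x, F_x; \mathcal{O}_{2,x}) \cong \mathbb{Z}$ at every $x$.

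The main obstacle is the coinvariant computation in the inductive step: one must identify the $\pi_1(X)$-action on $\mathcal{H}_q$ carefully as the diagonal of monodromy and left multiplication on $\Lambda$ (with the orientation twist) in order to verify that vanishing of $H_0(X; \mathcal{H}_q)$ detects vanishing of $H_q(CF_x, F_x; \mathbb{Z})$. Once that identification is made, the rest of the proof is a standard spectral sequence collapse combined with an edge-map argument.
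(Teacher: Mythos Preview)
Your spectral-sequence argument is correct and is essentially the paper's second suggested route (the one referred to \cite[lem.~I.4.3]{Browder2} and to the proof of Lemma~\ref{lem_Thom_necessary}). The paper's primary proof, however, bypasses spectral sequences entirely: the construction in the proof of Lemma~\ref{lem_Thom_sufficient} already manufactures a Thom class $u'$ whose fibre restrictions are, by design, the preferred generators, and uniqueness of the Thom class (Theorem~\ref{thm_T_unq}) then forces the given $u$ to restrict to a generator as well. That short route tacitly uses that the fibre is a homotopy $(k{-}1)$-sphere---true in the only place the lemma is applied---whereas your argument recovers $H^k(CF_x,F_x;\mathbb Z)\cong\mathbb Z$ directly from the Thom isomorphism and is in that sense more self-contained. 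One simplification worth noting: the ``main obstacle'' you flag, computing the coinvariants $H_0(X;\mathcal H_q)$ under the diagonal $\pi_1$-action, evaporates if you pass to the universal cover as in Lemma~\ref{lem_Thom_necessary}. Running the relative Serre sequence for the pulled-back fibration over $\tilde X$ with integer coefficients has the same abutment, and over the simply connected base the $E^2$-coefficients are the constant groups $H_q(CF_x,F_x;\mathbb Z)$, making the induction immediate; your identification $(A\otimes\Lambda)_{\pi}\cong A$ is precisely the algebraic content of that passage.
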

\begin{proof}
We outline two proofs.

In the first instance, by the proof of Lemma \ref{lem_Thom_sufficient}, one can construct a Thom class $u'$ such that $u'|_{CF_{x}}$ is a generator. Then the conclusion follows from the uniqueness of the Thom class (Theorem \ref{thm_T_unq}).

An alternative proof can be derived from the proof Lemma \ref{lem_Thom_necessary}. An argument using the Serre spectral sequence implies the conclusion. For details, see \cite[lem.~I.4.3]{Browder2}.
\end{proof}

\begin{proof}[Proof of Theorem \ref{thm_P_crt}]
(1). Suppose $\eta \in H_{n+k} (X, Y_{1} \cup Y_{2}; \mathcal{O}_{1})$ is a fundamental class of $(X, Y_{1} \cup Y_{2})$. If $F \simeq S^{k-1}$, by Theorem \ref{thm_T_ext}, we know $(X, Y_{1})$ satisfies Thom isomorphism with Thom class $u \in H^{k} (X, Y_{1}; \mathcal{O}_{2})$. Define $\mathcal{O} = \mathcal{O}_{1} \otimes \mathcal{O}_{2}$ and
\[
[X] = \eta \cap u \in H_{n} (X, Y_{2}; \mathcal{O}).
\]
By Proposition \ref{prop_dual_Thom}, we obtain \eqref{def_Poincare_pair_1} and (\ref{def_Poincare_pair_2}). Conversely, if \eqref{def_Poincare_pair_1} or \eqref{def_Poincare_pair_2} holds, then by Proposition \ref{prop_dual_Thom} again, we see that  $(X, Y_{1})$ satisfies Thom isomorphism of formal dimension $k$. By Theorem \ref{thm_T_ext} again, $F \simeq S^{k-1}$.
\medskip

\noindent (2). By Theorem \ref{thm_T_ext}, we know that $F$ and $F'$ are homotopy $(k-1)$-spheres if and only if $(X, Y_{1})$ and $(Y_{2}, Y_{0})$ satisfy the Thom isomorphism in formal dimension $k$. By Proposition \ref{prop_dual_Thom} and Lemma \ref{lem_boundary2}, this is in turn if and only if $(X, Y_{2})$ is a Poincar\'{e} pair of formal dimension $n$.
\medskip

\noindent (3). We know that $F$ and $F'$ are homotopy $(k-1)$-spheres, and $CF$ and $CF'$ are contractible. It suffices to show that the inclusion $(CF', F') \to (CF, F)$ induces an isomorphism
\[
H_{k} (CF', F'; \mathbb{Z}) \overset{\cong}{\longrightarrow} H_{k} (CF, F; \mathbb{Z}).
\]
Let $u \in H^{k} (X, Y_{1}; \mathcal{O}_{2})$ be a Thom class of $(X,Y_{1})$. By Lemma \ref{lem_fiber_Thom}, it suffices to show that $u|_{Y_{2}}$ is a Thom class for $(Y_{2}, Y_{0})$. (Note that, in Lemma \ref{lem_fiber_Thom}, $\mathcal{O}_{2}|_{CF_{x}}$ is actually constant.)

As in (1), let $\eta$ be a fundamental class for $(X, Y_{1} \cup Y_{2})$. Then $\eta \cap u$ is a fundamental class of $(X, Y_{2})$. Then
\[
\partial (\eta \cap u) = \partial \eta \cap u|_{Y_{1} \cup Y_{2}} =  (\partial \eta)|_{Y_{2}} \cap u|_{Y_{2}}
\]
is a fundamental class of the $(n-1)$-dimensional Poincar\'{e} space $Y_{2}$
(here, $(\partial \eta)|_{Y_{2}}$ is defined via excision). Since $(\partial \eta)|_{Y_{2}}$ is a fundamental class of the $(n+k-1)$-dimensional Poincar\'{e} pair $(Y_{2}, Y_{0})$, by Proposition \ref{prop_dual_Thom} again, $u|_{Y_{2}}$ is a Thom class of $(Y_{2}, Y_{0})$.
\end{proof}

Before giving the proof of Theorem  \ref{thm_interior_duality}, we consider a related problem. If $(X,Y)$ and $[X] \in H_{n} (X,Y; \mathbb{Z})$ satisfy \eqref{def_Poincare_pair_1}, \eqref{def_Poincare_pair_2} and \eqref{def_Poincare_pair_3} with integer coefficients, then we say $(X,Y)$ satisfies \textit{Poincar\'{e} duality with integer coefficients}, where the formal dimension is $n$.

When  $Y = \emptyset$, Browder \cite[p.\ 192]{Browder3} exhibited a space $X$ satisfying Poincar\'{e} duality with integer coefficients but which is not a Poincar\'e space. Browder constructed $X$ by taking the wedge of infinitely many acyclic spaces whose fundamental groups are nontrivial (where acyclic in this case means the reduced homology with integer coefficients vanishes). The resulting space clearly satisfies Poincar\'{e} duality with integer coefficients in formal dimension $0$. However, its fundamental group is not finitely generated. Thus, by \cite[cor.~1]{Browder3}, $X$ cannot be a Poincar\'{e} space. Note that Browder's counterexample is not homotopy finite, nor even finitely dominated. We present an alternative class of examples which are actually finite complexes.

\begin{proposition}\label{prop_Z_duality}
For each $n \geq 0$, there are finite CW complexes $X$ which satisfy Poincar\'{e} duality with integer coefficients having formal dimension $n$ but are not Poincar\'{e} spaces of any formal dimension.
\end{proposition}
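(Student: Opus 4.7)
The plan is to take $A$ to be the punctured Poincar\'{e} homology $3$-sphere, the same space appearing in Theorem \ref{thm_interior_duality}, and set $X := A$ for $n = 0$ and $X := A \times S^n$ for $n \geq 1$. Applying Mayer-Vietoris to the decomposition of the Poincar\'{e} sphere as $A$ glued to $D^{3}$ along $S^{2}$, together with the fact that this space has the integer homology of $S^{3}$, shows that $A$ is $\mathbb{Z}$-acyclic. Its fundamental group $G := \pi_{1}(A)$ is the binary icosahedral group of order $120$, which is perfect; its universal cover $\tilde{A}$ is identified with $S^{3}$ minus $120$ open $3$-disks, so $\tilde{A} \simeq \bigvee_{119} S^{2}$ and $H_{2}(\tilde{A};\mathbb{Z}) \cong \mathbb{Z}^{119}$.

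Poincar\'{e} duality with integer coefficients follows immediately from K\"{u}nneth: since $A$ is $\mathbb{Z}$-acyclic, $H_{\ast}(X;\mathbb{Z}) \cong H_{\ast}(S^{n};\mathbb{Z})$, and with $[X] := 1 \times [S^{n}] \in H_{n}(X;\mathbb{Z})$ the cap product $[X] \cap (-) \colon H^{m}(X;\mathbb{Z}) \to H_{n-m}(X;\mathbb{Z})$ is the K\"{u}nneth translate of cap product with $[S^{n}]$, hence an isomorphism in every degree.

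It remains to show $X$ is not a Poincar\'{e} space of any formal dimension. Assume for contradiction that $X$ is a Poincar\'{e} space of formal dimension $d$ with dualizing system $\mathcal{O}$. Since $G$ is perfect, $H^{1}(X;\mathbb{Z}/2) \cong \mathrm{Hom}(\pi_{1}(X),\mathbb{Z}/2)$ vanishes unless $n = 1$, so $\mathcal{O}$ is trivial when $n \neq 1$. For such $n$, $\pi_{1}(X) = G$ is finite, and the cap-product isomorphism \eqref{def_Poincare_pair_1} with $\mathcal{B} = \mathbf{\Lambda}$ reduces via \eqref{equ_singular_complex} to an abelian-group isomorphism $H^{m}(\tilde{X};\mathbb{Z}) \cong H_{d-m}(\tilde{X};\mathbb{Z})$ for all $m$. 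A K\"{u}nneth computation for $\tilde{X} = \tilde{A} \times S^{n}$ (or $\tilde{X} = \tilde{A}$ when $n = 0$) shows that $H_{\ast}(\tilde{X};\mathbb{Z})$ is concentrated in degrees at most $n+2$ with $H^{n+2}(\tilde{X};\mathbb{Z}) \cong \mathbb{Z}^{119}$, while the required isomorphism $H^{0} \cong H_{d}$ forces $d \leq n$; taking $m = n+2$ therefore gives the contradiction $\mathbb{Z}^{119} \cong H_{d-n-2}(\tilde{X};\mathbb{Z}) = 0$. For $n = 1$ with trivial $\mathcal{O}$, direct computation yields $H^{\ast}(X;\mathbf{\Lambda}) \cong H^{\ast}_{c}(\tilde{A} \times \mathbb{R};\mathbb{Z})$ concentrated in degrees $1, 3$ with ranks $1, 119$, while $H_{\ast}(X;\mathbf{\Lambda}) = H_{\ast}(\tilde{A})$ is concentrated in degrees $0, 2$ with ranks $1, 119$; matching ranks forces both $d = 1$ and $d = 5$, a contradiction. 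The nontrivial orientation system appearing only when $n = 1$, pulled back from the M\"{o}bius bundle on $S^{1}$, is ruled out by a K\"{u}nneth calculation giving $H_{0}(X;\mathcal{O}) \cong \mathbb{Z}/2$ and $H_{i}(X;\mathcal{O}) = 0$ for $i \geq 1$, so there is no fundamental class in any positive degree.

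The main obstacle is the case analysis in the previous paragraph: one must rule out every pair $(d, \mathcal{O})$, combining perfection of $G$ (to restrict orientation systems) with the large free summand $\mathbb{Z}^{119} \subseteq H_{2}(\tilde{A})$ (which produces the rank mismatch in each case).
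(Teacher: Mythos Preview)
Your construction coincides with the paper's (both take the punctured Poincar\'e homology sphere $A$ and form $X = A \times S^n$), but the argument that $X$ is not a Poincar\'e space proceeds along a genuinely different route. The paper first shows $A$ itself is not a Poincar\'e space: perfection of $G = \pi_1(A)$ forces the dualizing system to be trivial and the formal dimension to be $0$; then part~(1) of Theorem~\ref{thm_covering} makes $\tilde A$ a Poincar\'e space of dimension $0$, hence contractible, so $A$ would be a finite $K(G,1)$ for a nontrivial finite group---impossible. The passage from $A$ to $A \times P$ then invokes Theorem~\ref{thm_fibration} (or Wall's product theorem). You instead exploit the explicit homotopy type $\tilde A \simeq \bigvee_{119} S^2$ and obtain a direct rank mismatch in $H^*(\tilde X)$ versus $H_{d-*}(\tilde X)$, with the $n=1$ case handled via the identification $H^*(X;\mathbf{\Lambda}) \cong H^*_c(\tilde X)$. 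Your approach is more self-contained---it avoids the forward references to Theorems~\ref{thm_covering} and~\ref{thm_fibration}---at the cost of a case analysis and some computation; the paper's argument is shorter and more conceptual once those theorems are in hand.

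One small omission: in the $n=1$ case with nontrivial $\mathcal{O}$ you only exclude $d \ge 1$. The case $d = 0$ is ruled out just as easily, since $H_0(X;\mathcal{O}) \cong \mathbb{Z}/2$ cannot be isomorphic to $H^0(X;\mathbb{Z}) \cong \mathbb{Z}$.
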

\begin{proof}
Let $M$ be the Poincar\'{e} homology 3-sphere (see \cite[p.~150-151]{Greenberg_Harper}). 
Then $G = \pi_1 (M)$ is nontrivial, finite and perfect (recall that the latter means $G$ equals its commutator subgroup). 
Moreover, $M$ is a compact smooth $3$-manifold, so it admits the structure of a finite three dimensional CW complex that arises from a triangulation.
Let $A$ be the effect of removing any 3-cell from $M$. Then $A$ is acyclic and  satisfies Poincar\'{e} duality with integer coefficients, where the formal dimension is $0$. We shall prove that $A$ is not a Poincar\'{e} space of any formal dimension.

Since $\pi_{1} (A) = G$ is perfect, a homomorphism from $\pi_{1} (A)$ to $\mathbb{Z}_{2}$ is trivial. Consequently, an orientation system on $A$ must be trivial. If $A$ is a Poincar\'{e} space of formal dimension $n$, then $H_{n} (A; \mathbb{Z})$ is nontrivial, and we infer $n=0$. By Wall's classification \cite[thm. 4.2 (i)]{Wall1}, this is impossible. For the reader's convenience, we give some of the details here. Let $\tilde A\to A$ be the universal cover. Since $\pi_{1} (A)$ is finite, it follows from Theorem \ref{thm_covering} that $\tilde A$ is a Poincar\'{e} space of formal dimension $0$.  Hence, $\tilde A$ is contractible and $A$ is an Eilenberg-Mac~Lane space of type $K(G,1)$. However, since $G$ is finite and nontrivial, $A$ cannot have the homotopy type of a finite  complex (cf.~\cite[prop.~2.45]{Hatcher}), which is a contradiction.

Finally, for any finite complex $P$ satisfying Poincar\'{e} duality with integer coefficients having formal dimension $n$, define $X = A \times P$. Then $X$ is finite and satisfies Poincar\'{e} duality with integer coefficients and has formal dimension $n$. However, $X$ is not a Poincar\'{e} space. This is easily deduced from Theorem \ref{thm_fibration}, but it also follows from the simpler result \cite[thm.~2.5]{Wall1}.
\end{proof}

\begin{proof}[Proof of Theorem  \ref{thm_interior_duality}]
The space $A$ in this theorem is the one in the proof of Proposition \ref{prop_Z_duality}. Furthermore, $Y = A \times S^{n-1}$, and $X = CY$ is the cone of $Y$. Then $H_{*} (Y; \mathbb{Z}) \cong H_{*} (S^{n-1}; \mathbb{Z})$ and $X$ is contractible. Hence, $(X,Y)$ satisfies \eqref{def_Poincare_pair_1} and \eqref{def_Poincare_pair_2} with integer coefficients. Since $X$ is $1$-connected, by the finiteness of $(X,Y)$ and Lemma \ref{lem_rd_dual}, we infer $(X,Y)$ satisfies \eqref{def_Poincare_pair_1} and \eqref{def_Poincare_pair_2}. However, Proposition \ref{prop_Z_duality} shows that $Y$ is not a Poincar\'{e} space.
\end{proof}

\begin{remark}\label{rmk_P_crt}
Suppose $(K,L)$ is a finite CW pair satisfying the first two types of isomorphisms in Definition \ref{def_Poincare_pair}. By taking a thickening of $(K,L)$, we obtain a manifold triad $(X; Y_{1}, Y_{2})$ such that $(X,Y_{2}) \simeq (K,L)$. By assertion (1) of Theorem \ref{thm_P_crt}, the homotopy fiber $F$ of $Y_{1} \to X$ has the homotopy type of $S^{k-1}$. However, Theorem \ref{thm_interior_duality} suggests that Poincar\'e duality might fail for  $Y_{2}$. This motivates the consideration of $F'$ in Assertion (2) of Theorem \ref{thm_P_crt}.
\end{remark}

%%%------------------------------------------------------------------------------------------------------------------------------------------------------
%%%------------------------------------------------------------------------------------------------------------------------------------------------------
%%%------------------------------------------------------------------------------------------------------------------------------------------------------
\section{Doubling}\label{sec_double}
The goal of this section is to prove Theorem \ref{thm_doubling}. The implication ``$\Leftarrow$" of Theorem \ref{thm_doubling} can be easily deduced from the following gluing lemma which was proved by Wall in \cite[thm.~2.1, add.]{Wall1} and reproved in \cite[prop.~C.6]{Goodwillie_Klein}.

\begin{lemma}[Gluing]\label{lem_gluing}
Suppose $(X_{+}; Y_{1}, Y_{2})$ and $(X_{-}; Y_{3}, Y_{2})$ are Poincar\'{e} CW triads of formal dimension $n$. Assume $Y_{1} \cap Y_{2} = Y_{3} \cap Y_{2} = Y_{0}$. Then
\[
(X_{+} \cup_{Y_{2}} X_{-}; Y_{1}, Y_{3})
\] is a Poincar\'{e} triad of formal dimension $n$, where the fundamental classes of $(X_{+}; Y_{1}, Y_{2})$ and $(X_{-}; Y_{3}, Y_{2})$ are the restrictions of those of $(X_{+} \cup_{Y_{2}} X_{-}, Y_{1} \cup_{Y_{0}} Y_{3})$.
\end{lemma}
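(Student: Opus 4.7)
The plan is to construct a fundamental class on the amalgamation $X := X_+ \cup_{Y_2} X_-$ via Mayer--Vietoris, then establish the duality isomorphisms by comparing two Mayer--Vietoris long exact sequences via the Five Lemma, and finally reduce the triad axioms on the output to the triad axioms on the inputs.

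For the assembly, let $\mathcal{O}_{\pm}$ and $[X_{\pm}]$ denote the dualizing systems and fundamental classes of the two given Poincar\'e triads. By the triad hypotheses, the excision restrictions of $\partial_\ast [X_{\pm}]$ to $H_{n-1}(Y_2, Y_0; \mathcal{O}_{\pm}|_{Y_2})$ are both fundamental classes of $(Y_2, Y_0)$, so Lemma~\ref{lem_dual_orientation} gives an isomorphism $\mathcal{O}_+|_{Y_2} \cong \mathcal{O}_-|_{Y_2}$. After component-wise sign adjustments on $[X_-]$, I can arrange simultaneously that $\mathcal{O}_+$ and $\mathcal{O}_-$ glue to a single orientation system $\mathcal{O}$ on $X$ and that the two boundary classes become negatives of each other in $H_{n-1}(Y_2, Y_0; \mathcal{O}|_{Y_2})$. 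Combining the excision isomorphism
\[
H_\ast(X, Y_1 \cup Y_2 \cup Y_3; \mathcal{O}) \cong H_\ast(X_+, Y_1 \cup Y_2; \mathcal{O}_+) \oplus H_\ast(X_-, Y_3 \cup Y_2; \mathcal{O}_-)
\]
with the long exact sequence of the triple $(X, Y_1 \cup Y_2 \cup Y_3, Y_1 \cup Y_3)$ and the excision $(Y_1 \cup Y_2 \cup Y_3, Y_1 \cup Y_3) \simeq (Y_2, Y_0)$, the sign-matched pair $([X_+], [X_-])$ lifts to a class $[X] \in H_n(X, Y_1 \cup Y_3; \mathcal{O})$ restricting to $[X_{\pm}]$ on each side.

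For the interior duality isomorphism \eqref{def_Poincare_pair_1}, I would compare, for each local system $\mathcal{B}$, the cohomology Mayer--Vietoris long exact sequence of the excisive cover $\{X_+, X_-\}$ of $X$ (intersection $Y_2$) with the long exact sequence for $H_\ast(X, Y_1 \cup Y_3; \mathcal{O} \otimes \mathcal{B})$ obtained above. Cap products with $[X]$, $[X_+]$, $[X_-]$, and $\partial_\ast[X]|_{Y_2}$ furnish a ladder of vertical maps whose squares commute by naturality of cap product and the restriction properties of $[X]$. The three non-central verticals are isomorphisms---those involving $X_\pm$ by the assumed Poincar\'e pair structure, and the one involving $(Y_2, Y_0)$ by the triad hypotheses---so the Five Lemma delivers \eqref{def_Poincare_pair_1} for $(X, Y_1 \cup Y_3)$. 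For the remaining triad axioms, note that since $Y_1 \cap Y_2 = Y_3 \cap Y_2 = Y_0$ and $Y_1 \cap X_- \subseteq X_+ \cap X_- = Y_2$, one has $Y_1 \cap Y_3 = Y_0$, so $\{Y_1, Y_3\}$ is excisive inside $Y_1 \cup Y_3$. The pairs $(Y_1, Y_0)$ and $(Y_3, Y_0)$ are Poincar\'e pairs of formal dimension $n-1$ by the input triad structures, with fundamental classes that agree with the excision restrictions of $\partial_\ast[X]$ by construction of $[X]$. Applied recursively in dimension $n-1$ (where the middle piece of the would-be triad is empty and the gluing is along the codimension-zero locus $Y_0$), the same Mayer--Vietoris/Five-Lemma argument shows that $Y_1 \cup_{Y_0} Y_3$ is a Poincar\'e space with fundamental class $\partial_\ast[X]$, giving \eqref{def_Poincare_pair_3}; combined with the trivial implication $\eqref{def_Poincare_pair_1}+\eqref{def_Poincare_pair_3}\Rightarrow\eqref{def_Poincare_pair_2}$ noted in the introduction, this completes the verification that $(X; Y_1, Y_3)$ is a Poincar\'e triad of formal dimension $n$.

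The main obstacle will be the sign bookkeeping: one must synchronize the Mayer--Vietoris connecting maps on the cohomology and homology sides so that the cap-product ladder genuinely commutes, and verify that cap product with the restricted boundary class $\partial_\ast[X]|_{Y_2}$ controls the ``middle rung'' of the Five Lemma. Once this naturality and the initial sign choice $\partial_\ast[X_+]|_{Y_2} = -\partial_\ast[X_-]|_{Y_2}$ are installed, the result is a clean Five-Lemma consequence of the Poincar\'e duality already supplied by the two input triads together with the recursive application used to handle the boundary.
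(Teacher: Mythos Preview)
Your proposal is correct and follows the standard Mayer--Vietoris/Five Lemma argument; the paper itself does not supply a proof of this lemma but cites Wall \cite[thm.~2.1, add.]{Wall1} and Goodwillie--Klein \cite[prop.~C.6]{Goodwillie_Klein}, whose arguments are essentially the one you outline. Your use of Lemma~\ref{lem_dual_orientation} to match the dualizing systems on $Y_2$, the construction of $[X]$ via the long exact sequence of the triple, and the recursive application in one lower dimension to handle $Y_1\cup_{Y_0}Y_3$ are all in line with those references, and your caveat about sign bookkeeping in the cap-product ladder is exactly where the care is needed.
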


\begin{proof}[Proof of Theorem \ref{thm_doubling}]
It remains to prove ``$\Rightarrow$".

If necessary, we take the product with $S^{3}$ and use Lemma \ref{lem_rd_space} to obtain a homotopy equivalence of triads
\[
f\: (K; L_{1}, L_{2}) \rightarrow (X; Y_{1}, Y_{2}),
\]
where $(K; L_{1}, L_{2})$ is a finite CW triad. Define $L_{0} = L_{1} \cap L_{2}$. We see that $(K \cup_{L_{2}} K, L_{1} \cup_{L_{0}} L_{1})$ is homotopy equivalent to $(X \cup_{Y_{2}} X, Y_{1} \cup_{Y_{0}} Y_{1})$. By Lemma \ref{lem_rd_dual}, to prove the conclusion, we may assume $(X; Y_{1}, Y_{2})$ is a finite CW triad. We may also assume $X$ is connected.

We first prove the special case $Y_{1} = \emptyset$. Call the latter the
absolute case. Choose a manifold thickening $(M_{+}, N_{2})$ of $(X,Y_2)$ in some $(\mathbb{H}^{m}, \mathbb{R}^{m-1})$, where $\mathbb{H}^{m} = [0, +\infty) \times \mathbb{R}^{m-1}$,
\[
\partial M_{+} = N_{1} \cup_{N_{0}} N_{2}\, ,
\]
$N_{2} = \partial M_{+} \cap \mathbb{R}^{m-1}$, $N_{1}$ is the closure of $\partial M_{+} \setminus N_{2}$, and $N_{0} = N_{1} \cap N_{2}$.  Now glue two copies of $M_{+}$ together along $N_{2}$ to produce a manifold $M$ with boundary $\partial M = N_{1} \cup_{N_{0}} N_{1}$. Clearly, $M$ is a manifold thickening of $X \cup_{Y_{2}} X$. If we choose $m$ sufficiently large, the pairs $(M,\partial M)$ and $(M_{+}, N_{1})$ and $(N_{2}, N_{0})$ will be 2-connected. Since $X \cup_{Y_{2}} X$ and $Y_{2}$ are Poincar\'{e} spaces,  by (2) in Theorem \ref{thm_P_crt}, we know that the homotopy fibers of $\partial M \to M$ and $N_{0} \to N_{2}$ have the homotopy type of $S^{k}$ for some $k \geq 2$. Let $F$ denote the homotopy fiber of the map $\partial M \to M$ taken at a point of $N_{2}$ and let $F_{+}$ be the homotopy fiber of $N_{1} \to M_{+}$ taken at the same point. By Theorem \ref{thm_P_crt} again, it suffices to show that $F_{+}$ is homotopy equivalent to $S^{k}$.

Clearly, since $(M,\partial M)$ is the double of $(M_{+}, N_{1})$, one has an evident retraction $r\colon\! (M, \partial M) \to (M_{+}, N_{1})$ to the inclusion. The map $r$ induces in turn a retraction of homotopy fibers $F \to F_{+}$. But $F$ has the homotopy type of $S^k$. This will imply that the space $F_{+}$ is either contractible or it has the homotopy type of $S^k$. If $F_{+}$ were contractible, then the cohomology groups $H^{*} (M_{+}, N_{1}; \mathcal{B})$ would be trivial in all degrees $*$ and all local systems $\mathcal{B}$ on $M_{+}$. By Poincar\'e duality, it follows that the homology groups $H_{*} (M_{+}, N_{2}; \mathcal{B})$ are also trivial. But $(M_{+}, N_{2}) \simeq (X,Y_2)$, so the groups $H_{*} (X,Y_2;\mathcal B)$ are trivial for all local systems $\mathcal{B}$ on $X$, which implies the isomorphism
\[
H_{*} (Y_{2}; \mathcal{B}) \overset{\cong}{\longrightarrow} H_{*} (X; \mathcal{B})\, .
\]
Since $Y_{2}$ has formal dimension $n-1$, the Mayer-Vietoris exact sequence shows that $H_{n}( X \cup_{Y_{2}} X; \mathcal A) = 0$ for any local system $\mathcal{A}$ on $X \cup_{Y_2} X$. This contradicts the assumption that $X \cup_{Y_{2}} X$ has formal dimension $n$. Consequently, $F_{+}$ is not contractible, so it must be homotopy equivalent to $S^k$, as was to be shown.

We now turn to the case when $Y_{1} \neq \emptyset$. The idea will be to
reduce to the absolute case.  By assumption, $Y_1 \cup_{Y_0} Y_1$ and $Y_0$ are Poincar\'e spaces. By the absolute case, we infer that $(Y_1,Y_0)$ is a Poincar\'e pair. Since $(Y_{2}, Y_{0})$ is also a Poincar\'{e} pair, by Lemma \ref{lem_gluing}, $Y:= Y_1 \cup_{Y_0} Y_2$ is a Poincar\'e space (of formal dimension $n-1$). We also know by assumption that $(X\cup_{Y_2}X,Y_1\cup_{Y_0}Y_1)$ is a Poincar\'e pair. Consider the Poincar\'e pair $(Z,\partial Z)$, where $Z$ is $Y_1 \times [0,1]$ with $Y_0 \times [0,1]$ collapsed to $Y_0$ by means of the projection, and $\partial Z = Y_1\cup_{Y_0}Y_1 $. Then by Lemma \ref{lem_gluing} again, we can glue $Z$ to $X \cup_{Y_{2}} X $ along $\partial Z$ to  obtain the Poincar\'e space
\[
(X\cup_{Y_2}X) \cup_{\partial Z}  Z\, .
\]
Clearly, the latter is homotopy equivalent to $X\cup_{Y} X$. Consequently, $X\cup_{Y} X$ is also a Poincar\'e space. By the absolute case again, we infer that $(X,Y)$ is a Poincar\'e pair. This completes the argument.
\end{proof}

It is natural to ask what happens in Theorem \ref{thm_doubling} if one removes the assumption that
$(Y_{2}, Y_{1} \cap Y_{2})$ is a Poincar\'{e} pair of formal dimension $n-1$. In this case, one can show that the conclusion of the theorem is no longer true:

\begin{example}\label{exm_double}
Let $(X,Y)$ be the pair constructed in Theorem  \ref{thm_interior_duality} with $n > 1$. Then $X \cup_{Y} X$
 is the unreduced suspension of $Y$. Since $Y$ is homologically $S^{n-1}$, we infer that $X \cup_{Y} X \simeq S^{n}$,  which is a Poincar\'{e} space. However, $(X,Y)$ is not a Poincar\'{e} pair.
\end{example}

In another direction, following the proof of Theorem \ref{thm_doubling}, one obtains the following result, which uses Assertion (1) in Theorem \ref{thm_P_crt}.

\begin{proposition}\label{prop_doubling}
Suppose $(X,Y)$ is a finitely dominated pair with $Y \neq \emptyset$. Suppose $X \cup_{Y} X$ is a Poincar\'{e} space of formal dimension $n$. Suppose further $H_{n} (Y; \mathcal{O}|_{Y}) = 0$ for all orientation systems $\mathcal{O}$ on $X \cup_{Y} X$. Then \eqref{def_Poincare_pair_1} and \eqref{def_Poincare_pair_2} hold.
\end{proposition}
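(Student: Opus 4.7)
The plan is to adapt the manifold-thickening strategy used in the proof of Theorem~\ref{thm_doubling}, with the key new input being the hypothesis that $H_n(Y;\mathcal{O}|_Y)=0$ replacing the Poincar\'e assumption on the ``bottom boundary'' $(Y_2,Y_0)$. First I would use Lemma~\ref{lem_rd_space} and Lemma~\ref{lem_rd_dual} to pass, via product with $S^{3}$, to a finite CW pair $(X,Y)$ with $X$ connected. Smoothly thickening $(X,Y)$ in $(\mathbb{H}^m,\mathbb{R}^{m-1})$ for large $m$ yields a manifold triad $(M_+;N_1,N_2)$ with $(M_+,N_2)\simeq(X,Y)$ and $k:=m-n\geq 3$, where $N_1$ is the closure of $\partial M_+\setminus N_2$ and $N_0=N_1\cap N_2$. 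Doubling along $N_2$ produces a manifold $M=M_+\cup_{N_2}M_+$, a thickening of $X\cup_Y X$, with boundary $\partial M=N_1\cup_{N_0}N_1$.

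Since $X\cup_Y X$ is a Poincar\'e space of formal dimension $n$, the pair $(M,\partial M)$ is a Poincar\'e pair; applying Theorem~\ref{thm_P_crt}(1) to the triad $(M;\partial M,\emptyset)$ identifies the homotopy fiber $F$ of $\partial M\to M$ with a homotopy $(k-1)$-sphere. The fold retraction $(M,\partial M)\to(M_+,N_1)$ and the inclusion in the opposite direction induce, at a basepoint chosen in $N_0$, maps of homotopy fibers whose composition is the identity, exhibiting the homotopy fiber $F_+$ of $N_1\to M_+$ as a retract of $F\simeq S^{k-1}$. Because $F_+$ is simply connected and its homology is a direct summand of $H_*(S^{k-1})$, it must be either contractible or homotopy equivalent to $S^{k-1}$. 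Once the former alternative is excluded, a final application of Theorem~\ref{thm_P_crt}(1), now to the manifold triad $(M_+;N_1,N_2)$, delivers \eqref{def_Poincare_pair_1} and \eqref{def_Poincare_pair_2} for $(X,Y)\simeq(M_+,N_2)$.

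The main obstacle is ruling out the possibility that $F_+$ is contractible, and this is precisely where the hypothesis on $H_n(Y;\mathcal{O}|_Y)$ enters. If $F_+$ were contractible, then $N_1\hookrightarrow M_+$ would be a homotopy equivalence, so $H^{*}(M_+,N_1;\mathcal{B})=0$ for every local system $\mathcal{B}$. Poincar\'e--Lefschetz duality for the manifold triad $(M_+;N_1,N_2)$ would translate this vanishing into $H_{*}(M_+,N_2;\mathcal{C})=0$ for every $\mathcal{C}$, hence $H_{*}(X,Y;\mathcal{C})=0$, and the inclusion $Y\hookrightarrow X$ would be a homology equivalence with arbitrary twisted coefficients. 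Now let $\mathcal{A}$ denote the dualizing orientation system of $X\cup_Y X$; taking $\mathcal{C}=\mathcal{A}|_X$, the assumption $H_n(Y;\mathcal{A}|_Y)=0$ propagates to $H_n(X;\mathcal{A}|_X)=0$, and a Mayer--Vietoris computation for the decomposition $X\cup_Y X$ then collapses $H_n(X\cup_Y X;\mathcal{A})$ to zero, contradicting the Poincar\'e duality identification $H_n(X\cup_Y X;\mathcal{A})\cong H^{0}(X\cup_Y X;\mathbb{Z})\neq 0$. This contradiction forces $F_+\simeq S^{k-1}$, completing the argument.
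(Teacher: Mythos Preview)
Your argument is correct and is precisely the route the paper intends: it says only that the result follows ``following the proof of Theorem~\ref{thm_doubling}, \dots\ which uses assertion (1) in Theorem~\ref{thm_P_crt},'' and your sketch carries this out in detail---thicken, double, use Theorem~\ref{thm_P_crt}(1) to get $F\simeq S^{k-1}$, retract to see $F_+$ is contractible or a sphere, rule out contractibility via Mayer--Vietoris and the hypothesis on $H_n(Y;\mathcal{O}|_Y)$, then invoke Theorem~\ref{thm_P_crt}(1) again. One small point to tidy: after the $\times S^3$ reduction you continue to write $(X,Y)$ and $n$ for the new pair, so when you invoke ``the assumption $H_n(Y;\mathcal{A}|_Y)=0$'' you are implicitly using that the hypothesis persists; the cleanest fix is to observe that once $H_\ast(M_+,N_2;\mathcal{C})=0$ for all $\mathcal{C}$, K\"unneth transports this to $H_\ast(X,Y;\mathcal{C}_0)=0$ for the \emph{original} pair, and then run the Mayer--Vietoris contradiction there with the original $n$.
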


Note that Example \ref{exm_double} satisfies the assumptions of Proposition of \ref{prop_doubling}.

%%%------------------------------------------------------------------------------------------------------------------------------------------------------
%%%------------------------------------------------------------------------------------------------------------------------------------------------------
%%%------------------------------------------------------------------------------------------------------------------------------------------------------
\section{Finite Coverings}\label{sec_cover}
The purpose of this section is to prove Theorem \ref{thm_covering}.

Suppose $p\colon\! \bar X \to X$ is covering space, then a local system $\mathcal A$ on $X$ pulls back to a local system $p^\ast\mathcal A$ on $\bar X$. The operation $\mathcal A \mapsto p^\ast \mathcal A$ defines a functor $p^\ast$ from the category of local systems on $X$ to the category of local systems on $\bar X$.

The functor $p^\ast$ has a right adjoint $p_\ast$ defined as follows: if $\mathcal B$ is a local system on $\bar X$, then $p_\ast \mathcal{B}$ is defined by
\[
(p_\ast\mathcal B)_{x} :=  \prod_{y \in p^{-1}(x)} \mathcal B_{y}\, , \qquad x \in X\, .
\]
The fact that this defines a functor uses the path  lifting property.

If we use the direct sum instead of the direct product in the above display, we obtain another local system that will be denoted $p_{!}\mathcal B$. The assignment $\mathcal B \mapsto p_! \mathcal B$ defines a functor $p_!$ which is left adjoint to $p^\ast$. There is also an evident natural transformation
\[
p_{!} \to p_{\ast}
\]
which is an isomorphism whenever $p$ is a finite cover. The functors $p_!$ and $p_\ast$ correspond respectively to induction and coinduction if interpret local systems as modules over the fundamental groupoid. There are natural isomorphisms of chain and cochain complexes
\begin{equation} \label{eqn:identify}
C_{\bullet} (\bar X; \mathcal{B}) \cong C_{\bullet} (X; p_!\mathcal{B})\ \ \text{and} \ \ C^{\bullet} (\bar X;\mathcal B) \cong C^{\bullet} (X; p_\ast\mathcal{B})\, .
\end{equation}
Similarly, if $(X,Y)$ is a pair, and $\bar Y \to Y$ is the restriction of $p$ to $Y$, then we obtain a covering space of pairs $(\bar X,\bar Y) \to (X,Y)$ inducing isomorphisms of relative chain complexes
\begin{equation} \label{eqn:identify2}
C_{\bullet} (\bar X,\bar Y; \mathcal{B}) \cong C_{\bullet} (X,Y; p_!\mathcal{B})\ \ \text{and} \ \ C^{\bullet} (\bar X,\bar Y;\mathcal B) \cong C^{\bullet} (X,Y; p_\ast\mathcal{B})\, .
\end{equation}

\begin{proof}[Proof of Theorem \ref{thm_covering}]
(1). Let $\mathcal{B}$ be an arbitrary local system on $\bar X$. Since $p$ is a finite cover, we have $p_!\mathcal B=p_\ast\mathcal B$.

Suppose $\mu \in C_{n} (X, Y; \mathcal{O})$ is a cycle representing a fundamental class of $(X, Y)$. Let $\bar{\mu} \in C_{n} (\bar X, \bar Y; p^{*} \mathcal{O})$ be the transfer of $\mu$. Then $\bar{\mu}$ is also a cycle. Using the isomorphisms
\eqref{eqn:identify} and \eqref {eqn:identify2} it is straightforward to check that the square
\[
\xymatrix{
  C^{*} (\bar X; \mathcal{B}) \ar[d]_{\bar{\mu} \cap} \ar[r]^-{\cong} & C^{*} (X; p_!\mathcal{B}) \ar[d]^{\mu \cap} \\
  C_{n-*} (\bar X, \bar Y; p^{*} \mathcal{O} \otimes \mathcal{B}) \ar[r]_-{\cong} & C_{n-*} (X, Y; \mathcal{O} \otimes p_! \mathcal{B})   }
\]
commutes. Since $(X,Y)$ is a Poincar\'{e} pair, the right vertical map of
the square is a quasi-isomorphism. Hence the left map is also a quasi-isomorphism.
Similar reasoning shows that the map
\[
\bar{\mu} \cap\: C^{*} (\bar X, \bar Y; \mathcal{B}) \rightarrow C_{n-*} (\bar X; p^{*} \mathcal{O} \otimes \mathcal{B})
\]
is a quasi-isomorphism as well as the map
\[
\partial_{*} \bar{\mu} \cap\: C^{*} (\bar Y; \mathcal{G}) \to  C_{n-1-*} (\bar Y; p^{*} \mathcal{O} \otimes \mathcal{G})
\]
for all local systems $\mathcal{G}$ on $\bar Y$. Consequently $(\bar X, \bar Y)$ is a Poincar\'{e} pair with a fundamental class $[\bar{\mu}]$.
\medskip

\noindent (2). We know that $(X,Y)$ is a finitely dominated pair. By Lemmas \ref{lem_rd_space} and \ref{lem_rd_dual}, we may assume that $(X,Y)$ and $(\bar X, \bar Y)$ are finite CW pairs. By choosing a manifold thickening of $(X,Y)$, we obtain a compact manifold triad $(M; N_{1}, N_{2})$ such that $(X,Y) \simeq (M, N_{2})$. Furthermore, we can assume that $(M, N_{1})$ and $(N_{2}, N_{0})$ are $2$-connected, where $N_{0} = N_{1} \cap N_{2}$. Corresponding to the covering
space pair $(\bar X, \bar Y) \rightarrow (X,Y)$, we obtain a covering space $4$-ad
\[
(\bar M; \bar N_{1}, \bar N_{2}; \bar N_{0}) \rightarrow (M; N_{1}, N_{2}; N_{0}),
\]
in which $(\bar X, \bar Y) \simeq (\bar M; \bar N_{2})$.

By Part (2) of Theorem \ref{thm_P_crt}, $(M, N_{2})$ or $(\bar M; \bar N_{2})$ is a Poincar\'{e} pair if and only if certain homotopy fibers are spheres. Since the inclusions $\bar N_{1} \to \bar M$ (resp. $\bar N_{0} \to \bar N_{2}$) and $N_{1} \to M$ (resp. $N_{0} \to N_{2}$) share the same homotopy fiber, the conclusion holds.
\end{proof}

%%%------------------------------------------------------------------------------------------------------------------------------------------------------
%%%------------------------------------------------------------------------------------------------------------------------------------------------------
%%%------------------------------------------------------------------------------------------------------------------------------------------------------
\section{Fibrations}\label{sec_fibration}
In this section, we shall prove Theorem \ref{thm_fibration}. The strategy of the proof is to reduce to the absolute version using the Doubling Theorem \ref{thm_doubling}.

\begin{proof}[Proof of Theorem \ref{thm_fibration}]
If necessary, we can take product with $S^{3}$ and use Lemmas \ref{lem_rd_space} and \ref{lem_rd_dual}, to assume $(B, \partial B)$, $(F, \partial F)$ and $(E; \partial_{1} E, \partial_{2} E)$ are homotopy finite.

By assumption, $B$ is connected. In particular, the fibers over all points of $B$ have the same homotopy type. We may also assume that $E$ is connected since we can argue component-wise. Note that even when $E$ is connected, the fiber $F$ can be disconnected. However, by the finiteness assumption it only has finitely many components. By choosing a suitable finite covering space $\bar{B} \rightarrow B$, we obtain a commutative diagram
\[
\xymatrix{
 F_{0} \ar[r] & E \ar[dr] \ar[r]
                & \bar{B} \ar[d]  \\
            &    & B            }
\]
where the top row is also a fibration sequence whose fiber $F_{0}$ is a component of $F$.  Using Theorem \ref{thm_covering}, we  are reduced to considering $F_{0} \to E \rightarrow \bar{B}$. This reduces us to considering the case when $F$ is connected and $\bar{B} = B$.

In the absolute case $\partial B = \emptyset$ (hence, $\partial_{2} E = \emptyset$) and $\partial F = \emptyset$ (hence, $\partial_{1} E = \emptyset$), the conclusion was proved in \cite{Gottlieb} and \cite[cor.~F]{Klein2}. (Note that \cite{Gottlieb} and \cite{Klein2} also assumed that $F$ is connected. Furthermore, Part (2) is not stated in either \cite{Gottlieb} or \cite{Klein2}, but this statement is not hard to deduce from their proofs.)

Now suppose $\partial B \neq \emptyset$ and $\partial F = \emptyset$. We apply Theorem \ref{thm_doubling} to the double
\begin{equation}\label{thm_fibration_1}
E \cup_{\partial_{2} E} E \to  B \cup_{\partial B} B\, .
\end{equation}
We first show that \eqref{thm_fibration_1} is a fibration. Indeed, let $r\: B \cup_{\partial B} B \rightarrow B$ be the evident retract. Then $E \cup_{\partial_{2} E} E \to B \cup_{\partial B} B$ is identified with the pullback of the fibration $E \to B$ via $r$. Consequently,  \eqref{thm_fibration_1} is a fibration.

Suppose $(B, \partial B)$ is a Poincar\'{e} pair of formal dimension $n$ and $F$ is a Poincar\'{e} space of formal dimension $k$. Since
\[
F \rightarrow \partial_{2} E \rightarrow \partial B
\]
is a fibration, by the absolute version of the result, $\partial_{2} E$ is a Poincar\'{e} space of formal dimension $n+k-1$. (Note that $\partial B$
and hence $\partial_{2} E$ may be disconnected. If that is the case, we argue component-wise. Note also that all fibers of $\partial_{2} E \rightarrow \partial B$ share the same homotopy type since they are induced from $E \rightarrow B$.) By Theorem \ref{thm_doubling}, we see that $B \cup_{\partial B} B$ is a Poincar\'{e} space of formal dimension $n$. By the absolute version again, $E \cup_{\partial_{2} E} E$ is a Poincar\'{e} space of formal dimension $n+k$. By Theorem \ref{thm_doubling} again, $(E, \partial_{2} E)$ is a Poincar\'{e} pair of formal dimension $n+k$.

Conversely, suppose $(E, \partial_{2} E)$ is a Poincar\'{e} pair. Applying the absolute version to $\partial_{2} E$, we infer that $\partial B$ and $F$ are Poincar\'{e} spaces of formal dimension $n-1$ and $k$ respectively for some $n$ and $k$. By Theorem \ref{thm_doubling}, $E \cup_{\partial_{2} E} E$ is a Poincar\'{e} space of formal dimension $n+k$. By the absolute version again, $B \cup_{\partial B} B$ is a Poincar\'{e} space of formal dimension $n$. By Theorem \ref{thm_doubling} again, $(B, \partial B)$ is a Poincar\'{e} pair of formal dimension $n$. This establishes the result when $\partial F = \emptyset$.

It remains to prove the result when $\partial F \neq \emptyset$. Consider the map
\begin{equation}\label{thm_fibration_2}
 E \cup_{\partial_{1} E} E \to B.
\end{equation}
Since $(E, \partial_{1} E) \rightarrow B$ is a fibration pair and $\partial_{1} E \rightarrow E$ is a closed cofibration, by \cite[prop.~1.3]{Clapp}, the map \eqref{thm_fibration_2} is a fibration whose fiber over the basepoint is $F \cup_{\partial F} F$. The conclusion now follows by applying the result in the previous case using Theorem \ref{thm_doubling} applied to the triad $(E; \partial_{1} E, \partial_{2} E)$.
\end{proof}

\begin{remark}
Besides Theorem \ref{thm_fibration}, the absolute fibration theorem has some other extensions. For example, a similar argument shows that under suitable finiteness assumptions $(E; E|_{\partial_{1} B}, E|_{\partial_{2} B})$ is a Poincar\'{e} triad if and only if $(B; \partial_{1} B, \partial_{2} B)$ is a Poincar\'{e} triad and $F$ is a Poincar\'{e} space.
\end{remark}

\begin{remark}
For a fibration $F \to E \to B$, if we do not assume that $F$ is finitely dominated, then the conclusion of Theorem \ref{thm_fibration} will no longer be true. For example, consider the path fibration $\Omega B \to PB \to B$. Then $PB$ is contractible and hence a Poincar\'{e} space of formal dimension $0$. However, $\Omega B$ is usually not a Poincar\'{e} space even if $B$ is a finite Poincar\'{e} complex. The example of the double cover $S^\infty \rightarrow \mathbb{R} P^\infty$ shows that the finiteness assumption on $B$ is also needed for the result to hold.
\end{remark}

%%%------------------------------------------------------------------------------------------------------------------------------------------------------
%%%------------------------------------------------------------------------------------------------------------------------------------------------------
%%%------------------------------------------------------------------------------------------------------------------------------------------------------
\section{Historical Remarks}\label{sec_history}
Here we describe various inequivalent definitions of Poincar\'{e} pairs appearing in the literature. The difference among them is captured in the statement of Theorem \ref{thm_P_crt}. Historically, there were at least four different definitions, all of which are equivalent in the case of finite CW pairs $(X,Y)$ in which $X$ and $Y$ are $1$-connected.

The earliest and simplest definition was provided by Browder \cite{Browder0}, \cite[p.~927]{Browder1}, which only assumes \eqref{def_Poincare_pair_1}, \eqref{def_Poincare_pair_2} and (\ref{def_Poincare_pair_3}) with integer coefficients, where the dualizing system is necessarily trivial. Browder's definition suffices for doing surgery theory in the simply connected case (c.f.~\cite{Browder2}). However, as seen in Proposition \ref{prop_Z_duality}, it is inadequate more generally.

Wall gave two definitions. The definition in \cite[p.\ 215]{Wall1} is equivalent to the one used in this paper. In \cite[sec.~2]{Wall2}, Wall defined the notion of a {\it simple} Poincar\'{e} pair. This definition takes simple homotopy type into account, and is well-adapted for use in surgery theory in the non-simply connected case.

Yet another definition is due to Spivak \cite[p.\ 82]{Spivak}. Suppose $(X,Y)$ is a finite CW pair and $p: \tilde X \rightarrow X$ is a universal cover. Denote by $\tilde Y = p^{-1} (Y)$. Spivak considered homology with locally finite chains, denoted by $H_{*}^{\text{LF}}$ (which coincides with Borel-Moore homology). His definition respectively exchanges \eqref{def_Poincare_pair_1}, \eqref{def_Poincare_pair_2} and (\ref{def_Poincare_pair_3}) with isomorphisms
\[
[\tilde X] \cap\: H^{m} (\tilde X; \mathbb{Z}) \overset{\cong}{\longrightarrow} H_{n-m}^{LF} (\tilde X, \tilde Y; \mathbb{Z}),
\]
\[
[\tilde X] \cap\: H^{m} (\tilde X, \tilde Y; \mathbb{Z}) \overset{\cong}{\longrightarrow} H_{n-m}^{LF} (\tilde X; \mathbb{Z}),
\]
and
\[
\partial_{*} [\tilde X] \cap\: H^{m} (\tilde Y; \mathbb{Z}) \overset{\cong}{\longrightarrow} H_{n-1-m}^{LF} (\tilde Y; \mathbb{Z}).
\]

There are obvious differences between Spivak's definition and Wall's \cite{Wall1}.  In particular, if $X$ is 1-connected, then Spivak's definition amounts to Browder's definition.  However, if $Y$ isn't $1$-connected, then $(X,Y)$ may fail to satisfy Wall's definition. A case in point is the $(X,Y)$ in Theorem \ref{thm_interior_duality}. Note that, with merely integer coefficients, \eqref{def_Poincare_pair_1} and  \eqref{def_Poincare_pair_2} imply \eqref{def_Poincare_pair_3} (cf.~Lemma \ref{lem_boundary1}). Consequently, $(X,Y)$ satisfies Spivak's definition but does not satisfy Wall's.

Lastly, we relate Part (2) of Theorem \ref{thm_P_crt} to the above, which is similar to Spivak's \cite[prop.~4.6]{Spivak}. In the latter, Spivak ignored the homotopy fiber $F'$: he claimed that $(X,Y_{2})$ is a Poincar\'{e} pair if and only if $F$ itself is a homotopy sphere. In fact, if we suppose $F$ is a homotopy sphere, it is not hard to show that $(X,Y_{2})$ is indeed a Poincar\'{e} pair with Spivak's definition. However, if $Y_{2}$ is not $1$-connected, Remark \ref{rmk_P_crt} shows that $(X,Y_{2})$ may not satisfy Wall's definition.

%%%------------------------------------------------------------------------------------------------------------------------------------------------------
%%%------------------------------------------------------------------------------------------------------------------------------------------------------
%%%------------------------------------------------------------------------------------------------------------------------------------------------------
\appendix

%%%------------------------------------------------------------------------------------------------------------------------------------------------------
%%%------------------------------------------------------------------------------------------------------------------------------------------------------
%%%------------------------------------------------------------------------------------------------------------------------------------------------------
\section{Skew-Commutativity of Cup Products}
In this section, we consider the cup product of cohomology with local systems and prove its skew-commutativity Proposition \ref{prop_cup}. In what follows $R$ is a commutative ring with unit.

Let $\mathcal{G}$ and $\mathcal{H}$ be local systems of $R$-modules on a space $A$. Then we have a natural isomorphism
\begin{equation}\label{equ_tensor_swap}
\begin{array}{rcl}
\mathcal{H} \otimes_{R} \mathcal{G} & \longrightarrow & \mathcal{G} \otimes_{R} \mathcal{H} \, ,\\
h \otimes g & \mapsto & g \otimes h\, .
\end{array}
\end{equation}
\begin{definition}\label{def_excisive_couple}
Let $B_{1}, B_{2} \subseteq A$ be subspaces of $A$. One says that $\{ B_{1}, B_{2} \}$ is  an {\it excisive couple} if the evident homomorphisms
\[
H_{*} (B_{1}, B_{1} \cap B_{2}; \mathcal{B}) \longrightarrow H_{*} (B_{1} \cup B_{2}, B_{2}; \mathcal{B})
\]
and
\[
H_{*} (B_{2}, B_{1} \cap B_{2}; \mathcal{B}) \longrightarrow H_{*} (B_{1} \cup B_{2}, B_{1}; \mathcal{B})
\]
are isomorphisms for all $*$ and all local systems $\mathcal{B}$ on $B_{1} \cup B_{2}$.
\end{definition}

\begin{example}
If the interiors of $B_{1}$ and $B_{2}$ with respect to the topology of $B_{1} \cup B_{2}$ cover $B_{1} \cup B_{2}$, then $\{ B_{1}, B_{2} \}$ is an excisive couple.
\end{example}

\begin{example}
Suppose both $B_{1}$ and $B_{2}$ are closed in $B_{1} \cup B_{2}$. If the inclusions  $B_{1} \cap B_{2} \to B_{1}$ and $B_{1} \cap B_{2} \to B_{2}$ are cofibrations, then $\{ B_{1}, B_{2} \}$ is an excisive couple.
\end{example}

\begin{proposition}[Skew-Commutativity]\label{prop_cup}
Assume $\{ B_{1}, B_{2} \}$ is an excisive couple. Then for every $a \in H^{p} (A, B_{1}; \mathcal{G})$ and $b \in H^{q} (A, B_{2}; \mathcal{H})$, one has
\[
a \cup b = (-1)^{pq} \varphi (b \cup a) \in H^{p+q} (A, B_{1} \cup B_{2}; \mathcal{G} \otimes_{R} \mathcal{H}),
\]
where
\[
\varphi\colon\! \ \ H^{p+q} (A, B_{1} \cup B_{2}; \mathcal{H} \otimes_{R} \mathcal{G}) \longrightarrow H^{p+q} (A, B_{1} \cup B_{2}; \mathcal{G} \otimes_{R} \mathcal{H})
\]
is the natural isomorphism induced by \eqref{equ_tensor_swap}.
\end{proposition}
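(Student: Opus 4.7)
The plan is to reduce the relative skew-commutativity statement to the classical one for constant coefficients via the universal cover and the method of acyclic models, with the excisive couple hypothesis used to make the cochain-level cup product well behaved with respect to the pair $(A, B_1 \cup B_2)$.

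First, I would pass to the universal cover $p \colon \tilde A \to A$ with deck group $\pi := \pi_1(A)$, and write $\tilde B_i := p^{-1}(B_i)$. Setting $\Lambda := \mathbb Z[\pi]$ and $M_{\mathcal G}, M_{\mathcal H}$ for the stalks of $\mathcal G, \mathcal H$ at the basepoint equipped with their $\pi$-actions, one has a natural identification
\[
H^\ast(A, B_1; \mathcal G) \;\cong\; H^\ast\bigl(\mathrm{Hom}_\Lambda(C_\bullet(\tilde A, \tilde B_1; \mathbb Z), M_{\mathcal G})\bigr),
\]
and similarly for the other factors in the statement, so that the cup product is induced by the Alexander--Whitney diagonal approximation $\Delta \colon C_\bullet(\tilde A) \to C_\bullet(\tilde A) \otimes_{\mathbb Z} C_\bullet(\tilde A)$ (which is $\pi$-equivariant with respect to the diagonal $\pi$-action on the target) paired with the external tensor product on coefficients.

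Second, I would address the relative aspect. The naive diagonal $\Delta$ need not send $C_\bullet(\tilde A, \tilde B_1 \cup \tilde B_2)$ into $C_\bullet(\tilde A, \tilde B_1) \otimes C_\bullet(\tilde A, \tilde B_2)$, because a simplex lying in $\tilde B_1 \cup \tilde B_2$ may have its front face in $\tilde B_2$ and back face in $\tilde B_1$. Restricting, however, to the subcomplex $C^{\mathrm{sm}}_\bullet(\tilde A)$ of chains that are small with respect to the cover $\{\tilde A \smallsetminus \tilde B_1, \tilde A \smallsetminus \tilde B_2\}$, the Alexander--Whitney diagonal does land in the desired tensor product. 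The excisive couple hypothesis, translated along the covering $p$, then implies that the inclusion $C^{\mathrm{sm}}_\bullet(\tilde A, \tilde B_1 \cup \tilde B_2) \hookrightarrow C_\bullet(\tilde A, \tilde B_1 \cup \tilde B_2)$ is a $\pi$-equivariant quasi-isomorphism of free $\Lambda$-complexes, hence a chain homotopy equivalence by Corollary \ref{cor_complex_isomorphism}. Consequently the relative cup product on cohomology is realized on cochains via $\Delta$ restricted to small chains.

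Third, I would apply the method of acyclic models. The two $\pi$-equivariant natural chain maps $\Delta$ and $T \circ \Delta \circ \sigma$, where $T(x \otimes y) := (-1)^{|x||y|} y \otimes x$ is the signed twist and $\sigma$ is the simplex-reversal, both augment to the identity in degree zero and target a complex that is naturally acyclic on each affine model simplex; acyclic models therefore supplies a natural $\pi$-equivariant chain homotopy between them. Applying $\mathrm{Hom}_\Lambda(-, M_{\mathcal G} \otimes_R M_{\mathcal H})$ and tracking the twist through the coefficient pairing identifies the swap $T$ on chains with the isomorphism $\varphi$ on coefficients coming from \eqref{equ_tensor_swap}, yielding $a \cup b = (-1)^{pq}\varphi(b \cup a)$. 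I expect the main obstacle to be purely technical: checking that the small-chain subdivision procedure lifts cleanly to the universal cover and is compatible with arbitrary local coefficient pairings, which amounts to an invocation of Lemma \ref{lem_acyclic} to propagate the quasi-isomorphism from the constant coefficient case to all local systems.
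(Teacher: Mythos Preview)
Your overall strategy---reducing to integral chains on the universal cover and invoking acyclic models to compare the Alexander--Whitney diagonal with its twisted version---is sound and is essentially what the paper does. However, your Step~2 contains a gap and is in fact unnecessary. The collection $\{\tilde A \smallsetminus \tilde B_1, \tilde A \smallsetminus \tilde B_2\}$ is not a cover of $\tilde A$ unless $B_1 \cap B_2 = \emptyset$, so ``small chains'' with respect to it has no meaning in general, and the claimed quasi-isomorphism of small and ordinary relative chains cannot be extracted from the excisive hypothesis.

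The paper avoids this detour entirely. Rather than trying to force the diagonal to land in $C_\bullet(\tilde A, \tilde B_1) \otimes C_\bullet(\tilde A, \tilde B_2)$, one works dually: if $\alpha$ is a cochain vanishing on $C_\bullet(B_1)$ and $\beta$ one vanishing on $C_\bullet(B_2)$, then both $\alpha \cup \beta$ and $\beta \cup \alpha$ automatically vanish on the subcomplex $C_\bullet(B_1) + C_\bullet(B_2)$, since any simplex lying in $B_i$ has all of its faces in $B_i$. The paper then introduces the simplex-reversal involution $\theta$ on $C_\bullet(A; \mathcal B)$ and shows, by acyclic models, that $\theta$ is chain homotopic to the identity via a homotopy \emph{natural in subspaces}; it therefore descends to the quotient $C_\bullet(A; \mathcal B)/\bigl(C_\bullet(B_1; \mathcal B) + C_\bullet(B_2; \mathcal B)\bigr)$ (Lemma~\ref{lem_permutation}). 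The cochain identity $(\alpha \cup \beta)(\sigma) = (-1)^{pq}\, \varphi\bigl((\beta \cup \alpha)(\theta\sigma)\bigr)$ then yields the result on the cohomology of this quotient, and the excisive hypothesis is invoked only at the very end to identify that cohomology with $H^\ast(A, B_1 \cup B_2; \mathcal G \otimes_R \mathcal H)$. Replacing your Step~2 with this quotient-complex argument closes the gap and in fact simplifies the proof.
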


Suppose $\mathcal{B}$ be the local system on $A$. Let
\[
S_{p} = [e_{0}, e_{1}, \cdots,e_{p}]
\]
be the standard $p$-simplex. Consider the affine map $\theta_p: S_{p} \to S_{p}$ defined on vertices by
\[
\theta_{p} (e_{i}) = e_{p-i}\, .
\]
Then the collection $\{\theta_p\}_{p \ge 0}$ defines a chain map
\[
\theta \colon\!  C_{\bullet} (A; \mathcal{B}) \longrightarrow C_{\bullet} (A; \mathcal{B})\, ,
\]
where
\[
\theta (\sigma) := (-1)^{\frac{1}{2} p(p+1)} (\sigma \circ {\theta}_{p}), \qquad \sigma \colon\! S_{p} \to A\, .
\]
Clearly, $\theta$ is an involution.

\begin{lemma}
The chain map $\theta$ on $C_{\bullet} (A; \mathcal{B}) \longrightarrow C_{\bullet} (A; \mathcal{B})$ is chain homotopic to the identity, i.e. there is $R$-module homomorphism
\[
J: \ \ C_{\bullet} (A; \mathcal{B}) \longrightarrow C_{\bullet} (A; \mathcal{B})
\]
which raises the degree by $1$ such that
\[
\mathrm{id} - \theta = \partial J + J \partial.
\]
Furthermore, if $B \subseteq A$, then $J (C_{\bullet} (B; \mathcal{B})) \subseteq C_{\bullet} (B; \mathcal{B})$.
\end{lemma}

\begin{proof} We use acyclic models.
The proof is classical if $\mathcal{B}$ is
the constant local system $\mathbb{Z}$ (see e.g. \cite[(24.11)]{Greenberg_Harper}),
and the argument is essentially the same in the general case.

For a singular simplex $\sigma: S_{p} \rightarrow A$, denote by $C (\sigma)$ the free abelian group generated by $\sigma \circ \tau$, where
\[
\tau \colon\! S_{q} \rightarrow S_{p}
\]
ranges over all affine maps the send vertices to vertices. Then $C (\sigma)$ is a free $\mathbb{Z}$-chain complex, and $\mathrm{id} - \theta$ is carried by $C (\sigma)$. In other words, there exists $\hat{J}: C (\sigma) \rightarrow C (\sigma)$ such that
\[
\mathrm{id} - \theta = \partial \hat{J} + \hat{J} \partial.
\]
Denote by $x_{0} = \sigma (e_{0})$. Then the desired $J$ can then be defined as
\[
J = 1 \otimes \hat{J}\: \mathcal{B}_{x_{0}} \otimes C (\sigma) \rightarrow \mathcal{B}_{x_{0}} \otimes  C (\sigma)\, . \qedhere
\]
\end{proof}

The last lemma immediately implies the following.

\begin{lemma}\label{lem_permutation}
Suppose $B_{1} \subseteq A$ and $B_{2} \subseteq A$. Then the chain map defined above determines a chain endomorphism of the quotient complex
\[
C_{\bullet} (A; \mathcal{B}) / \left( C_{\bullet} (B_{1}; \mathcal{B}) +  C_{\bullet} (B_{2}; \mathcal{B}) \right)
\]
which we also denote by $\theta$. Furthermore, the latter is chain homotopic to the identity.
\end{lemma}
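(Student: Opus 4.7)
The plan is to deduce the result immediately from the previous lemma by unpacking what ``natural'' means in that statement. First I would spell out the naturality of $\theta$ itself. By construction, $\theta$ is defined on a singular simplex $\sigma \colon S_p \to A$ by $\theta(\sigma) = (-1)^{p(p+1)/2}\, \sigma \circ \theta_p$, with the coefficient in $\mathcal{B}_{\sigma(e_0)}$ transported along $\sigma \circ \theta_p$ in the evident way. Pre-composition with any continuous map commutes with $\theta$; in particular, applied to the inclusions $B_i \hookrightarrow A$ (with the restricted local system), this shows that $\theta$ carries each $C_\bullet(B_i; \mathcal{B})$ into itself for $i = 1, 2$. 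Hence $\theta$ preserves the sum $C_\bullet(B_1; \mathcal{B}) + C_\bullet(B_2; \mathcal{B})$ and therefore descends to a chain endomorphism $\bar\theta$ of the quotient complex, establishing the first assertion.

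For the chain homotopy, I would invoke the acyclic-models construction in the previous lemma. The operator $\hat J$ there is built on the model complex $C(\sigma)$ generated by $\sigma \circ \tau$ as $\tau$ ranges over affine maps into $S_p$, and the essential point is that $\hat J$ depends only on the abstract standard simplex $S_p$, not on the specific $\sigma$. Consequently $J = 1 \otimes \hat J$ commutes with pre-composition by continuous maps, so it restricts to each $C_\bullet(B_i; \mathcal{B})$ and hence preserves $C_\bullet(B_1; \mathcal{B}) + C_\bullet(B_2; \mathcal{B})$. Therefore $J$ descends to a $\mathbb{Z}$-linear map $\bar J$ on the quotient complex, and the identity $\mathrm{id} - \theta = \partial J + J \partial$ passes to the quotient term-by-term to give $\mathrm{id} - \bar\theta = \partial \bar J + \bar J \partial$, so $\bar\theta$ is chain homotopic to the identity.

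There is no real obstacle here; the entire argument is a matter of transporting naturality across the quotient. The only point requiring a moment's thought is the assertion that $\hat J$ is chosen uniformly in $\sigma$, but this is precisely the content of the acyclic-models framework used in the previous lemma and is exactly why that lemma's statement explicitly records the naturality of its chain homotopy. With that in hand, both the well-definedness of $\bar\theta$ and the existence of $\bar J$ on the quotient are formal.
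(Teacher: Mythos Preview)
Your proposal is correct and follows exactly the approach the paper intends: the paper's proof is simply the one-line remark that the lemma follows from the previous one by naturality, and you have accurately unpacked what that naturality entails for both $\theta$ and the chain homotopy $J$. There is nothing to add.
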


\begin{proof}[Proof of Proposition \ref{prop_cup}]
The conclusion follows immediately from Lemma \ref{lem_permutation}. The argument is similar to \cite[p.\ 201-202]{Greenberg_Harper}.
\end{proof}

%%%------------------------------------------------------------------------------------------------------------------------------------------------------
%%%------------------------------------------------------------------------------------------------------------------------------------------------------
%%%------------------------------------------------------------------------------------------------------------------------------------------------------
\section{The K\"{u}nneth Theorems}
In this section we describe  the K\"{u}nneth Theorems for homology and cohomology with local systems. One would have expected such results to be classical, and in fact they were used by Wall in \cite[thm.~2.5]{Wall1}. However, Wall neither formulated the results, nor proved them, nor gave a reference. To the best of our knowledge, a paper by Greenblatt \cite[thms.~1.6,~1.7]{Greenblatt} is the only one on this subject. Unfortunately, \cite{Greenblatt} is insufficient for our purposes (as it didn't deal with space pairs and secondly, it didn't characterize explicitly the homology cross product). The purpose of this section is to fill a gap in the literature.

Define the product of space pairs as
\[
(A, B) \times (C, D) = (A \times C, (A \times D) \cup (B \times C)).
\]
Then we can consider whether $\{A \times D, B \times C\}$ is an excisive couple (see Definition \ref{def_excisive_couple}).

\begin{example}
If $B = \emptyset$ or $D = \emptyset$, then $\{ A \times D, B \times C \}$ is an excisive couple.
\end{example}

\begin{example}
If both $B$ and $D$ are open, then $\{ A \times D, B \times C \}$ is an excisive couple.
\end{example}

\begin{example}\label{exm_excisive}
If both $B \to A$ and $D \to C$ are closed cofibrations, then $\{ A \times D, B \times C \}$ is an excisive couple.
\end{example}

In what follows, let $R$ be a principal ideal domain. Suppose $(A,B)$ and $(C,D)$ are space pairs such that $\{ A \times D, B \times C \}$ is an excisive couple, and $A$ and $C$ are path-connected and have universal covers. Suppose $\mathcal{G}$ is a local system of \textbf{free} $R$-modules on $A$, and $\mathcal{H}$ is a local system of $R$-modules on $C$.

\begin{theorem}[K\"{u}nneth Theorem for Homology]\label{thm_Kunneth_h}
With respect to the above assumptions we have, for all $n \ge 0$, a splitting exact sequence
\[
{\small
\xymatrix{
  0 \ar[r] & \underset{q+r=n}{\bigoplus} H_{q} (A, B; \mathcal{G}) {\otimes_{R}} H_{r} (C, D; \mathcal{H}) \ar[r]^-{\times} &
  H_{n} ((A,B) {\times} (C,D); \mathcal{G} {\otimes_{R} }\mathcal{H}) \\
    \ar[r] & \underset{q+r=n-1}{\bigoplus} \mathrm{Tor}_{R} (H_{q} (A, B; \mathcal{G}), H_{r} (C, D; \mathcal{H})) \ar[r] & 0 \, , }}
\]
where the first displayed homomorphism is the homological cross product (cf.\ right before the proof of Theorem \ref{thm_Kunneth_cross}).
\end{theorem}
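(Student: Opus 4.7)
My plan is to reduce the statement to the algebraic Künneth formula for chain complexes over the PID $R$, via an Eilenberg--Zilber type equivalence for pairs with local coefficients. Let $\Lambda_A = R[\pi_1(A)]$ and $\Lambda_C = R[\pi_1(C)]$, so $R[\pi_1(A\times C)] = \Lambda_A\otimes_R \Lambda_C$. Write $M_A = \mathcal{G}_{a_0}$ (a right $\Lambda_A$-module, free over $R$ by hypothesis) and $M_C = \mathcal{H}_{c_0}$. Note that $\tilde A \times \tilde C$ is the universal cover of $A\times C$, and the preimage of $A\times D \cup B\times C$ is $\tilde A\times \tilde D \cup \tilde B\times \tilde C$.

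First, I would promote the classical Eilenberg--Zilber equivalence to the twisted setting. The standard Eilenberg--Zilber theorem gives a natural chain equivalence
\[
C_\bullet(\tilde A\times \tilde C;\, R) \longrightarrow C_\bullet(\tilde A;R)\otimes_R C_\bullet(\tilde C;R)
\]
which is equivariant under $\pi_1(A)\times \pi_1(C)$ acting diagonally on the source and factorwise on the target, because the Alexander--Whitney and shuffle maps are natural in each variable. Using the hypothesis that $\{A\times D,\, B\times C\}$ is an excisive couple, which is inherited by the preimage $\{\tilde A\times \tilde D,\, \tilde B\times \tilde C\}$ since the covering is a local homeomorphism, this descends to a chain equivalence of $\Lambda_A\otimes_R \Lambda_C$-complexes
\[
C_\bullet\bigl(A\times C,\, A\times D\cup B\times C;\, \mathbf{\Lambda}_{A\times C}\bigr) \;\simeq\; C_\bullet(A,B;\mathbf{\Lambda}_A)\otimes_R C_\bullet(C,D;\mathbf{\Lambda}_C).
\]

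Next, I would apply $(M_A\otimes_R M_C)\otimes_{\Lambda_A\otimes_R \Lambda_C}(-)$ to both sides. Using the canonical factorization
\[
(M_A\otimes_R M_C)\otimes_{\Lambda_A\otimes_R\Lambda_C}(X\otimes_R Y) \;\cong\; (M_A\otimes_{\Lambda_A} X)\otimes_R (M_C\otimes_{\Lambda_C} Y),
\]
this yields a natural chain equivalence of $R$-complexes
\[
C_\bullet\bigl((A,B)\times(C,D);\, \mathcal{G}\otimes_R \mathcal{H}\bigr) \;\simeq\; C_\bullet(A,B;\mathcal{G})\otimes_R C_\bullet(C,D;\mathcal{H}),
\]
and the homological cross product is by definition the induced homology map. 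Because $\mathbf{\Lambda}_A$ is $\Lambda_A$-free and $M_A$ is $R$-free, $C_\bullet(A,B;\mathcal{G})$ is termwise $R$-free; analogously for the other factor when $\mathcal{H}$ is $R$-free. The classical algebraic Künneth formula over the PID $R$ (applied when one factor is termwise flat) then produces the desired short exact sequence, and yields the natural splitting when both factors are termwise $R$-free.

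The main obstacle is the first step: producing the Eilenberg--Zilber equivalence in the twisted relative setting with all equivariance and excision bookkeeping. Equivariance is immediate from naturality of the Alexander--Whitney and shuffle maps. The subtle point is passing from the absolute chain complex on $\tilde A \times \tilde C$ to the quotient by the subpair: this requires exactly the excisiveness hypothesis, which ensures that $C_\bullet(\tilde A\times \tilde D; R) + C_\bullet(\tilde B\times \tilde C; R) \hookrightarrow C_\bullet(\tilde A\times \tilde D\cup \tilde B\times \tilde C; R)$ is a quasi-isomorphism, hence that the tensor product $C_\bullet(\tilde A,\tilde B;R)\otimes_R C_\bullet(\tilde C,\tilde D;R)$ maps quasi-isomorphically onto the relative complex.
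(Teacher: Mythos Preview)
Your proposal is correct and follows essentially the same route as the paper's proof: lift to universal covers, apply the Eilenberg--Zilber equivalence equivariantly for $R[\pi_1(A)\times\pi_1(C)]$, use excisiveness to pass to the relative quotient, tensor down to obtain $C_\bullet(A,B;\mathcal{G})\otimes_R C_\bullet(C,D;\mathcal{H})$, and invoke the algebraic K\"unneth theorem over the PID $R$ using that $\mathcal{G}$ $R$-free makes the first factor termwise $R$-free. One small remark: your justification that excisiveness lifts to the cover ``since the covering is a local homeomorphism'' is a bit oblique; it is cleaner to note that the paper's definition of excisive couple is stated for \emph{all} local systems on $B_1\cup B_2$, and taking $\mathcal{B}=\mathbf{\Lambda}_{A\times C}|_{A\times D\cup B\times C}$ immediately yields the needed quasi-isomorphism upstairs via the identification $C_\bullet(-;\mathbf{\Lambda})=C_\bullet(\widetilde{-};\mathbb{Z})$.
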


Define the cohomological cross product by
\[
\times\colon\!  H^{q} (A, B; \mathcal{G}) \otimes_{R} H^{r} (C, D; \mathcal{H}) \to
H^{q+r} ((A,B) {\times} (C,D); \mathcal{G} {\otimes_{R} }\mathcal{H})
\]
by the formula
\[
a \times b := p_{1}^{*} (a) \cup p_{2}^{*} (b)\, ,
\]
where $p_{1}$ and $p_{2}$ are the projections of $A \times C$ onto $A$ and $C$ respectively. Recall that a map $(K,L) \rightarrow (A,B)$ is a CW approximation of $(A,B)$ if the map is a weak homotopy equivalence of pairs (for the definition, see \cite[p.~219]{G.Whitehead}) and $(K,L)$ is a CW pair.

\begin{theorem}[K\"{u}nneth Theorem for Cohomology]\label{thm_Kunneth_c}
With respect to  the assumptions of Theorem \ref{thm_Kunneth_h}, assume further $(A,B)$ can be approximated by a CW pair $(K,L)$ such that $K \setminus L$ contains only finitely many cells in each dimension. In addition, assume one of the following two conditions:
\begin{enumerate}
\item $\mathcal{G}$ is a local system of finitely generated $R$-modules;

\item $(C,D)$ can be approximated by a CW pair $(P,Q)$ such that $P \setminus Q$ contains only finitely many cells in each dimension.
\end{enumerate}

Then for all $n$ there is a splitting exact sequence
\[
\xymatrix{
  0 \ar[r] & \underset{q+r=n}{\bigoplus} H^{q} (A, B; \mathcal{G}){\otimes_{R}} H^{r} (C, D; \mathcal{H}) \ar[r]^-{\times} & H^{n} ((A,B) {\times} (C,D);
{\mathcal{G} \otimes_{R} \mathcal{H}}) \\
   \ar[r] & \underset{q+r=n+1}{\bigoplus} \mathrm{Tor}_{R} (H^{q} (A, B; \mathcal{G}), H^{r} (C, D; \mathcal{H})) \ar[r] & 0 \, . }
\]
\end{theorem}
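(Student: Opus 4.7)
The plan is to reduce the theorem to a simplicial calculation via CW approximation and then apply the algebraic K\"unneth formula for cochain complexes over the PID $R$. The cell-finiteness hypotheses enter precisely to justify the tensor-Hom interchange needed in forming the cochain-level cross product.

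First I would use Lemma \ref{lem_weak_homotopy} to replace $(A,B)$ by the CW pair $(K,L)$ in the statement, and, in case (2), also replace $(C,D)$ by such a pair $(P,Q)$. Fix basepoints and set $\Lambda_1 := R[\pi_1(A)]$ and $\Lambda_2 := R[\pi_1(C)]$. Since $\pi_1(A\times C)\cong \pi_1(A)\times\pi_1(C)$, the local system $\mathbf{\Lambda}_{12}$ associated with $R[\pi_1(A\times C)]$ is canonically identified with $\mathbf{\Lambda}_1\otimes_R\mathbf{\Lambda}_2$. By Remark \ref{rmk_simplicial_complex} the cell-finiteness of $K\setminus L$ yields
\[
\Delta^\bullet(K,L;\mathcal{G}) \;\cong\; \mathcal{G}_{x_0}\otimes_{\Lambda_1}\Delta^\bullet(K,L;\mathbf{\Lambda}_1),
\]
where $\Delta^\bullet(K,L;\mathbf{\Lambda}_1)$ is bounded below and degreewise finitely generated free over $\Lambda_1$; the $R$-freeness of $\mathcal{G}$ then makes $\Delta^\bullet(K,L;\mathcal{G})$ degreewise free as an $R$-module. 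The analogous statement holds for $(P,Q)$ in case (2).

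Next I would construct the chain-level cross product. Let $\widetilde K$, $\widetilde P$ be the universal covers and $\widetilde L$, $\widetilde Q$ the induced subspaces by base change. Since $\widetilde K\times\widetilde P$ is the universal cover of $K\times P$, the equivariant Eilenberg--Zilber theorem supplies a $(\pi_1(K)\times\pi_1(P))$-linear chain equivalence
\[
\Delta_\bullet(\widetilde K,\widetilde L;R)\otimes_R\Delta_\bullet(\widetilde P,\widetilde Q;R) \;\xrightarrow{\simeq}\; \Delta_\bullet\bigl(\widetilde K\times\widetilde P,\,\widetilde L\times\widetilde P\cup\widetilde K\times\widetilde Q;\,R\bigr),
\]
and the excisive couple hypothesis (Definition \ref{def_excisive_couple}) identifies the target, after projection, with $\Delta_\bullet((K,L)\times(P,Q);\mathbf{\Lambda}_{12})$. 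Applying $\mathrm{Hom}_{\Lambda_{12}}(-,\mathcal{G}_{x_0}\otimes_R\mathcal{H}_{y_0})$ and invoking the finite-generation hypothesis (either on $\mathcal{G}$ in case (1), or on $\Delta_\bullet(\widetilde P,\widetilde Q;R)$ over $\Lambda_2$ in case (2)) to commute $\mathrm{Hom}$ with $\otimes_R$ yields an isomorphism of cochain complexes
\[
\Delta^\bullet(K,L;\mathcal{G})\otimes_R\Delta^\bullet(P,Q;\mathcal{H}) \;\xrightarrow{\simeq}\; \Delta^\bullet\bigl((K,L)\times(P,Q);\mathcal{G}\otimes_R\mathcal{H}\bigr)
\]
which on cohomology realizes the cross product $\times$ defined via $p_1^{\ast}\cup p_2^{\ast}$.

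Finally, I would invoke the classical algebraic K\"unneth sequence for bounded-below cochain complexes of free $R$-modules: since $\Delta^\bullet(K,L;\mathcal{G})$ is degreewise $R$-free, the standard short exact sequence with Tor term follows. In case (2) with $\mathcal{H}$ also $R$-free, both factors are degreewise $R$-free and the sequence splits by the usual non-canonical argument. The main obstacle is the tensor-Hom interchange above: the natural map
\[
\mathrm{Hom}_{\Lambda_1}(P,M)\otimes_R\mathrm{Hom}_{\Lambda_2}(Q,N) \;\longrightarrow\; \mathrm{Hom}_{\Lambda_1\otimes_R\Lambda_2}(P\otimes_R Q,\,M\otimes_R N)
\]
is an isomorphism only under suitable finite-generation on one factor, and this is precisely what hypotheses (1) and (2) are designed to provide; tracking the interplay between the Eilenberg--Zilber shuffle map and the $\Lambda_1\otimes_R\Lambda_2$-module structure is the most delicate part of the argument.
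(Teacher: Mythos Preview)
Your proposal is correct and follows essentially the same route as the paper's proof: reduce via Lemma~\ref{lem_weak_homotopy} to a cellular/simplicial model, apply the equivariant Eilenberg--Zilber equivalence, use the degreewise finiteness hypotheses to justify the $\mathrm{Hom}$--tensor interchange, and conclude with the algebraic K\"unneth formula over the PID $R$. The only minor difference is organizational: the paper first proves the special case of \emph{finite} simplicial pairs and then approximates $(K,L)$ by finite subpairs $(K_m,L_m)$ inducing cohomology isomorphisms through a given degree, whereas you work directly with $(K,L)$ using degreewise cell-finiteness; also note that in case~(1) you still need a CW (or simplicial) approximation of $(C,D)$ to make sense of $\Delta_\bullet(\widetilde P,\widetilde Q;R)$, which you implicitly use but do not explicitly introduce.
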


\begin{theorem}\label{thm_Kunneth_cross}
The cross product in Theorem \ref{thm_Kunneth_h} may be characterized as follows. Given $q_{1} \leq q$, $r_{1} \leq r$, $\xi \in H_{q} (A, B; \mathcal{G})$, $\eta \in H_{r} (C, D; \mathcal{H})$, $a \in H^{q_{1}} (A; \mathcal{G}_{1})$, and $b \in H^{r_{1}} (C; \mathcal{H}_{1})$. Then
\[
(\xi \times \eta) \cap (a \times b) = (-1)^{(q-q_{1})r_{1}} (\xi \cap a) \times (\eta \cap b),
\]
i.e. the following diagram commutes, where tensoring is over $R$:
\[
{\small
\xymatrix{
  H_{q} (A, B; \mathcal{G}) \otimes H_{r} (C, D; \mathcal{H}) \ar[d]_{(-1)^{(q-q_{1})r_{1}} (\cap a) \otimes (\cap b)} \ar[r]^-{\times} & H_{q+r} (*; \mathcal{G} \otimes \mathcal{H}) \ar[d]^{\cap (a \times b)} \\
  H_{q-q_{1}} (A, B; \mathcal{G} \otimes \mathcal{G}_{1}) \otimes H_{r-r_{1}} (C, D; \mathcal{H} \otimes \mathcal{H}_{1}) \ar[r]_-{\times} & H_{ q -q_{1} + r - r_{1} } (*; \mathcal{G} \otimes \mathcal{H} \otimes \mathcal{G}_{1} \otimes \mathcal{H}_{1})  }
  }
\]
Here $*$ stands for $(A,B) \times (C,D)$, and $\mathcal{G} \otimes \mathcal{H} \otimes \mathcal{G}_{1} \otimes \mathcal{H}_{1}$ is naturally identified with $\mathcal{G} \otimes \mathcal{G}_{1} \otimes \mathcal{H} \otimes \mathcal{H}_{1}$.
\end{theorem}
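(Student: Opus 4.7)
The plan is to establish the identity at the chain level first and then pass to homology. Throughout we work on the universal covers $\tilde{A}\to A$ and $\tilde{C}\to C$; since $\tilde{A}\times\tilde{C}$ is a universal cover of $A\times C$, one has $\pi_1(A\times C)=\pi_1(A)\times\pi_1(C)$ and the group-ring identifications
\[
C_\bullet(A,B;\mathcal{G}) \cong \mathcal{G}_{x_0}\otimes_{\Lambda_A}C_\bullet(\tilde{A},\tilde{B};\mathbb{Z}),\qquad C_\bullet(C,D;\mathcal{H}) \cong \mathcal{H}_{y_0}\otimes_{\Lambda_C}C_\bullet(\tilde{C},\tilde{D};\mathbb{Z}),
\]
with analogous formulas for $(A,B)\times(C,D)$ with coefficients $\mathcal{G}\otimes_R\mathcal{H}$, and similarly for cochains with $\mathcal{G}_1$, $\mathcal{H}_1$.

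First I would set up the chain-level cross product as (the descent of) the Eilenberg--Zilker shuffle map
\[
\mathrm{EZ}\colon\! C_\bullet(\tilde{A},\tilde{B};\mathbb{Z})\otimes_{\mathbb{Z}} C_\bullet(\tilde{C},\tilde{D};\mathbb{Z})\longrightarrow C_\bullet\bigl(\tilde{A}\times\tilde{C},\,\widetilde{A\times D\cup B\times C};\mathbb{Z}\bigr),
\]
and the cross product of cochains dually through the Alexander--Whitney map $\mathrm{AW}$ (so that cup is the composition $C^\ast\otimes C^\ast\xrightarrow{\mathrm{AW}^\ast}C^\ast$). The excisive couple hypothesis is precisely what guarantees, by subdivision or acyclic models, that $\mathrm{EZ}$ induces an isomorphism on homology onto $H_\bullet((A,B)\times(C,D);\mathcal{G}\otimes\mathcal{H})$ after tensoring with coefficients. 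With these choices, all products in the theorem are defined by the same chain formulas used in the untwisted case, simply transported across the identifications above.

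The core of the argument is the chain-level Koszul identity: for singular simplices $\sigma\colon\!\Delta^q\to\tilde{A}$, $\tau\colon\!\Delta^r\to\tilde{C}$ and cochains $\alpha\in C^{q_1}(\tilde{A})$, $\beta\in C^{r_1}(\tilde{C})$, one has
\[
\mathrm{EZ}(\sigma\otimes\tau)\cap\bigl(p_1^\ast\alpha\cup p_2^\ast\beta\bigr)\;=\;(-1)^{(q-q_1)r_1}\,\mathrm{EZ}\bigl((\sigma\cap\alpha)\otimes(\tau\cap\beta)\bigr).
\]
This is verified by inserting the explicit front-face/back-face formula $\sigma\cap\alpha=\alpha(\sigma|_{[0,\dots,q_1]})\cdot\sigma|_{[q_1,\dots,q]}$ on each $(p,q)$-shuffle contributing to $\mathrm{EZ}(\sigma\otimes\tau)$: after distributing $p_1^\ast\alpha\cup p_2^\ast\beta$ via $\mathrm{AW}$, each shuffle either vanishes or reproduces a shuffle of the cap-contracted factors, with the Koszul sign $(-1)^{(q-q_1)r_1}$ arising from commuting the ``back half'' $\sigma|_{[q_1,\dots,q]}$ (degree $q-q_1$) past the cochain $\beta$ (degree $r_1$). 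The identity is $\Lambda_A\otimes\Lambda_C$-bilinear, hence descends after tensoring with $\mathcal{G}_{x_0}$, $\mathcal{H}_{y_0}$, $\mathcal{G}_{1,x_0}$, $\mathcal{H}_{1,y_0}$ to the analogous identity at the chain level with local coefficients.

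Having the chain identity, passage to homology is automatic: choose cycle representatives for $\xi,\eta$ and cocycle representatives for $a,b$, apply the chain identity, and then identify the homology of the tensor-product complex on the right with $H_\bullet((A,B)\times(C,D);\mathcal{G}\otimes\mathcal{H})$ via $\mathrm{EZ}$ (this is where the excisive couple hypothesis enters). The main obstacle — and really the only nontrivial point — is the careful sign-tracking in the chain-level identity; once the sign $(-1)^{(q-q_1)r_1}$ is confirmed there, the rest of the argument is formal diagram-chasing and the naturality of $\mathrm{EZ}$ and $\mathrm{AW}$ with respect to the projections $p_1,p_2$.
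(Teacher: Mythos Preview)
Your approach is correct and differs from the paper's in a meaningful way. The paper does not carry out the singular chain-level computation you outline; instead it first treats the case where $(A,B)$ and $(C,D)$ are \emph{finite simplicial pairs}, asserting that with the explicit product triangulation of $S_q\times S_r$ the identity holds on the nose for simplicial chains (``the conclusion is true even on the chain level''). It then reduces the general case to this one by compact support of homology: given $\xi$ and $\eta$, one finds finite simplicial pairs $(K,L)$, $(P,Q)$ with maps $f,g$ into $(A,B)$, $(C,D)$ carrying classes $\xi',\eta'$ with $f_*\xi'=\xi$, $g_*\eta'=\eta$, pulls back $a,b$ to $a',b'$, applies the simplicial case, and pushes forward by $(f\times g)_*$.

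Your route bypasses this reduction by working directly with the singular Eilenberg--Zilber shuffle map. The combinatorics you describe is exactly right: among all $(q,r)$-shuffles contributing to $\mathrm{EZ}(\sigma\otimes\tau)$, only those whose first $q_1$ steps move in the $A$-direction and whose next $r_1$ steps move in the $C$-direction survive the evaluation by $p_1^*\alpha\cup p_2^*\beta$, and the back face of such a shuffle is precisely a $(q-q_1,r-r_1)$-shuffle of the back faces of $\sigma$ and $\tau$; the inversion count gives the sign $(-1)^{(q-q_1)r_1}$. One small point you should make explicit: this vanishing of the other shuffles relies on $\alpha$ and $\beta$ being \emph{normalized} cochains (i.e., vanishing on degenerate simplices). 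This is harmless since one may always choose such representatives, but without it the exact chain-level identity can fail. The paper's simplicial route sidesteps this issue because degenerate simplices do not appear in the simplicial chain complex, at the cost of the extra compact-support reduction step.
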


\begin{proof}[Proof of Theorem \ref{thm_Kunneth_h}]
Consider the universal covers $\theta_{1}: \tilde A \rightarrow A$ and $\theta_{2}: \tilde C \rightarrow C$. Let $\tilde B = \theta_{1}^{-1} (B)$ and $\tilde D = \theta_{2}^{-1} (D)$. Since $\{ A \times D, B \times C \}$ is an excisive couple, by the Eilenberg-Zilber theorem \cite[(29.34)]{Greenberg_Harper}, the Alexander-Whitney chain map
\[
C_{\bullet} (\tilde{A} \times \tilde{C}; R) / \left( C_{\bullet} (\tilde{A} \times \tilde{D}; R) + C_{\bullet} (\tilde{B} \times \tilde{C}; R) \right) \rightarrow C_{\bullet} (\tilde A, \tilde B; R) \otimes C_{\bullet} (\tilde C, \tilde D; R)
\]
and the natural map
\[
C_{\bullet} (\tilde{A} \times \tilde{C}; R) / \left( C_{\bullet} (\tilde{A} \times \tilde{D}; R) + C_{\bullet} (\tilde{B} \times \tilde{C}; R) \right) \rightarrow    C_{\bullet} ((\tilde A, \tilde B) \times (\tilde C, \tilde D); R)
\]
are chain equivalences.

Denote by $G_{1} = \pi_{1} (A)$ and $G_{2} = \pi_{1} (C)$. Then $C_{\bullet} ((\tilde A, \tilde B) \times (\tilde C, \tilde D); R)$ is a free $R[G_{1} \times G_{2}]$-complex and
\[
R[G_{1} \times G_{2}] \cong R[G_{1}] \otimes_{R} R[G_{2}]\, .
\]
Furthermore, $C_{\bullet} (\tilde A, \tilde B; R)$ is a free $R[G_{1}]$-complex, and $C_{\bullet} (\tilde C, \tilde D; R)$ is a free $R[G_{2}]$-complex. In the light of these observations, the above two chain equivalences are $R[G_{1} \times G_{2}]$-homomorphisms. Therefore, these two chain maps induce a chain equivalence of $R$-complexes
\[
C_{\bullet} ((A,B) \times (C,D); \mathcal{G} \otimes_{R} \mathcal{H}) \rightarrow C_{\bullet} (A, B; \mathcal{G}) \otimes_{R} C_{\bullet} (C, D; \mathcal{H}).
\]

Assuming $\mathcal{G}$ is $R$-free, we infer that $C_{\bullet} (A, B; \mathcal{G})$ is a free $R$-complex. Since $R$ is a PID, we obtain the desired splitting exact sequence by applying the algebraic K\"{u}nneth Theorem (cf. \cite[thm.~V.2.1]{Hilton_Stammbach}) to $C_{\bullet} (A, B; \mathcal{G}) \otimes_{R} C_{\bullet} (C, D; \mathcal{H})$. Note that the first displayed homomorphism is induced by a chain homotopy inverse of the Alexander-Whitney chain map, thus it is the cross product (cf. right before the proof of Theorem \ref{thm_Kunneth_cross}).
\end{proof}

\begin{proof}[Proof of Theorem \ref{thm_Kunneth_c}]
We can always choose a CW approximation $(P,Q) \rightarrow (C,D)$ (cf. \cite[p.~219]{G.Whitehead}).
If condition (2) is assumed, then we take $(P,Q)$ as the pair satisfying this condition. For brevity, we also let $\mathcal{G}$ and $\mathcal{H}$ denote the pullback local systems on $K$ and $P$ respectively via the CW approximations.

We have assumed $\{ A \times D, B \times C \}$ is an excisive couple. By Example \ref{exm_excisive}, so is $\{ K \times Q, L \times P \}$. Therefore, by the Eilenberg-Zilber Theorem, the singular cochain complexes $C^{\bullet} ((A,B) \times (C,D); \mathcal{G} \otimes_{R} \mathcal{H})$ and $C^{\bullet} ((K,L) \times (P,Q); \mathcal{G} \otimes_{R} \mathcal{H})$ are chain equivalent to
\[
\mathrm{Hom}_{R[G_{1} \times G_{2}]} \left( C_{\bullet} (\tilde A, \tilde B; R) \otimes_{R} C_{\bullet} (\tilde C, \tilde D; R), M_{1} \otimes_{R} M_{2}  \right),
\]
and
\[
\mathrm{Hom}_{R[G_{1} \times G_{2}]} \left( C_{\bullet} (\tilde K, \tilde L; R) \otimes_{R} C_{\bullet} (\tilde P, \tilde Q; R), M_{1} \otimes_{R} M_{2}  \right)
\]
respectively, where $M_{1}$ (resp.\ $M_{2}$) is the restriction of $\mathcal{G}$ (resp.\ $\mathcal{H}$) to the base point of $A$ (resp.\ $C$). Since $C_{\bullet} (\tilde K, \tilde L; R)$ (resp. $C_{\bullet} (\tilde P, \tilde Q; R)$) is chain equivalent to $C_{\bullet} (\tilde A, \tilde B; R)$ (resp. $C_{\bullet} (\tilde C, \tilde D; R)$), it suffices to obtain the desired exact sequence for $(K,L) \times (P,Q)$. By \cite[lem.~1]{Wall0}, we can further replace the above (co)chain complexes with the corresponding cellular (co)chain complexes by virtue of chain equivalences.

Let $\hat{C}_{\bullet}$ and $\hat{C}^{\bullet}$ denote the cellular chain and cochain complexes respectively. Since $K \setminus L$ has only finitely many cells in each dimension, $\hat{C}_{\bullet} (\tilde K, \tilde L; R)$ is a free $R[G_{1}]$-complex with finite rank in each degree. Moreover, the condition (1) in the assumption means that $M_{1}$ is a finitely generated $R$-module; the condition (2) means that $\hat{C}_{\bullet} (\tilde P, \tilde Q; R)$ is a free $R[G_{2}]$-complex with finite rank in each degeree. Since (1) or (2) is additionally satisfied, we have
\begin{eqnarray*}
&  & \mathrm{Hom}_{R[G_{1} \times G_{2}]} \left( \hat{C}_{\bullet} (\tilde K, \tilde L; R) \otimes_{R} \hat{C}_{\bullet} (\tilde P, \tilde Q; R), M_{1} \otimes_{R} M_{2}  \right) \\
& = & \mathrm{Hom}_{R[G_{1}]} \left( \hat{C}_{\bullet} (\tilde K, \tilde L; R), M_{1} \right) \otimes_{R} \mathrm{Hom}_{R[G_{2}]} \left( \hat{C}_{\bullet} (\tilde P, \tilde Q; R), M_{2} \right) \\
& = & \hat{C}^{\bullet}(K,L; \mathcal{G}) \otimes_{R} \hat{C}^{\bullet}(P,Q; \mathcal{H}).
\end{eqnarray*}
As $\mathcal{G}$ is a system of free $R$-modules, we infer that $\hat{C}^{\bullet}(K,L; \mathcal{G})$ is a free $R$-complex. Now the desired exact sequence follows from the algebraic K\"{u}nneth Theorem. Finally, by the definition of the Alexander-Whitney map, the cross product in this exact sequence is defined by cup products.
\end{proof}

Before proving  \ref{thm_Kunneth_cross}, we recall the definition of the homology cross product. This product actually arises from a canonical triangulation of the product of two simplices.

Denote by $S_{q}$ the standard $q$-simplex. Clearly, the vertices of $S_{q}$ and $S_{r}$ are ordered. Give the vertices of $S_{q} \times S_{r}$ the dictionary order. We denote by $(i,j)$ the vertices of $S_{q} \times S_{r}$, where $0 \leq i \leq q$ and $0 \leq j \leq r$. The notation $(i,j)$ means that the first coordinate is the $i$th vertex of $S_{q}$ and the second one is the $j$th vertex of $S_{r}$. Using the ordering on $S_{q} \times S_{r}$, there is a canonical way to triangulate $S_{q} \times S_{r}$ (see \cite[p.\ 277-278]{Hatcher} and \cite[(29.27)]{Greenberg_Harper}). The simplices of $S_{q} \times S_{r}$ are of the form
\begin{equation}\label{product_trig}
[(i_{0}, j_{0}), (i_{1}, j_{1}), \cdots, (i_{m}, j_{m})]
\end{equation}
such that $0 \leq i_{k} \leq q$, $0 \leq j_{k} \leq r$, $i_{k} \leq i_{k+1}$ and $j_{k} \leq j_{k+1}$ for all $k$, where these $(i_{k}, j_{k})$ are distinct.

Using the triangulation, we see that the projections $p_{1}: S_{q} \times S_{r} \rightarrow S_{q}$ and $p_{2}: S_{q} \times S_{r} \rightarrow S_{r}$ are simplicial maps preserving the ordering of vertices.

The cross product for the simplicial chain complex of $S_{q} \times S_{r}$ is defined as
\begin{equation}\label{cross}
\begin{array}{rcl}
\Delta_q (S_{q}; R) \otimes_{R} \Delta_r (S_{r}; R) & \overset{\times}{\longrightarrow} & \Delta_{q+r}(S_{q} \times S_{r}; R) \\ \empty
[S_{q}] \otimes [S_{r}] & \mapsto & \sum \pm [(i_{0}, j_{0}), (i_{1}, j_{1}), \cdots, (i_{q+r}, j_{q+r})],
\end{array}
\end{equation}
where the sum is over all $(q+r)$-simplices of $S_{q} \times S_{r}$, the signs $\pm$ are suitably chosen as $+$ or $-$ so that the orientations equal the product orientation of $S_{q} \times S_{r}$.

Let $(A,B)$ and $(C,D)$ be finite simplicial pairs. Order the vertices of $A$ and $C$. We can then obtain a simplicial structure on $A \times C$. This simplicial structure results in the cross product on the chain level of simplicial chain complexes:
\[
\Delta_q(A,B; \mathcal{G}) \otimes_{R} \Delta_r(C,D; \mathcal{H}) \overset{\times}\to \Delta_{q+r}( (A,B) \times (C,D); \mathcal{G} \otimes_{R} \mathcal{H})
\]
which is similar to \eqref{cross}.

More generally, suppose $(A,B)$ and $(C,D)$ are pairs of topological spaces. We can similarly define the cross product on the chain level of singular chain complexes, which yields a chain homotopy inverse of the Alexander-Whitney homomorphism (cf. \cite[(29.27)]{Greenberg_Harper}).

\begin{proof}[Proof of Theorem \ref{thm_Kunneth_cross}]
If $(A,B)$ and $(C,D)$ are finite simplicial pairs, the proof is elementary. In this case, we can use instead the simplicial chain and cochain complexes. The conclusion is true even on the chain level.

In general, suppose $(A,B)$ and $(C,D)$ are pairs of topological spaces. As homology classes are compactly supported, there exist finite simplicial pairs $(K,L)$ and $(P,Q)$ and maps
\[
f\colon\!  (K,L) \rightarrow (A,B) \qquad \text{ and } \qquad g\colon\!  (P,Q) \rightarrow (C,D)
\]
satisfying the following: there exist $\xi' \in H_{q} (K,L; f^{*} \mathcal{G})$ and $\eta' \in H_{r} (P,Q; g^{*} \mathcal{H})$ such that
\[
\xi = f_{*} \xi' \qquad \text{and} \qquad \eta = g_{*} \eta'.
\]
Let
\[
a' = f^{*} a \in H^{q_{1}} (K,L; f^{*} \mathcal{G}_{1}) \qquad \text{and} \qquad b' = g^{*} b \in H^{r_{1}} (P,Q; g^{*} \mathcal{H}_{1}).
\]
Since the conclusion holds for $(K,L) \times (P,Q)$, we infer that
\[
(\xi' \times \eta') \cap (a' \times b') = (-1)^{(q-q_{1})r_{1}} (\xi' \cap a') \times (\eta' \cap b').
\]
Applying $(f \times g)_{*}$ to the above equation, we complete the proof.
\end{proof}

%%%------------------------------------------------------------------------------------------------------------------------------------------------------
%%%------------------------------------------------------------------------------------------------------------------------------------------------------
%%%------------------------------------------------------------------------------------------------------------------------------------------------------
\section{Proof of Theorem \ref{thm_T_ext}}\label{apd_Thom}
Since the homotopy fiber and all algebraic objects involved in Theorem \ref{thm_T_ext} are homotopy invariants, in what follows we may assume $(X,Y)$ is a CW pair and $X$ is connected.

Recall the standard way to factor the inclusion $Y \to X$ as
\[
Y \rightarrow E \to X\, ,
\]
where $Y \to E$ is both a cofibration and a homotopy equivalence and $E\to X$ is a fibration.  Here, $E$ is the fiber product
\[
Y \times_X X^I
\]
given by the space of paths $\alpha\colon\!  [0,1] \to X$ satisfying $\alpha(0) \in Y$. The map $Y \to E$ is defined  by mapping a point to the constant path at that point and the map $E\to X$ sends a path to its terminus $\alpha(1)$. Let $CE := X^I$ be the free path space consisting of all paths in $X$.

Then we have an evident fibration pair
\[
(CE, E) \to X
\]
whose fiber at $x\in X$ is the pair $(CF_x,F_x)$, where $F_x$ is the homotopy fiber of the inclusion $Y \to X$ at $x\in X$ and $CF_x$ is the (contractible) space of paths in $X$ that are based at $x$. The evident inclusion $(X,Y) \to (CE,E)$ is a homotopy equivalence.

The proof of Theorem \ref{thm_T_ext} is divided into the following two lemmas.

\begin{lemma}\label{lem_Thom_sufficient}
The implication $``\Leftarrow"$ of Theorem \ref{thm_T_ext} is valid.
\end{lemma}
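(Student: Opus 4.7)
The plan is to construct the orientation system and Thom class directly from the fibration model of $Y \hookrightarrow X$, and to verify the Thom isomorphism via a collapsing Leray--Serre spectral sequence for the pair fibration.

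First, replace $(X,Y)$ by the homotopy equivalent pair $(CE,E) \to X$ introduced in the preamble, so that the problem becomes a pair fibration with fiber $(CF_x,F_x) \simeq (CS^{k-1},S^{k-1})$ over each $x \in X$. Because $(X,Y)$ is $2$-connected, each $F_x$ is simply connected (consistent with the hypothesis $k \geq 3$). Define the orientation system $\mathcal{O}$ on $X$ by $\mathcal{O}_x := H^k(CF_x,F_x;\mathbb{Z}) \cong \mathbb{Z}$, with $\pi_1(X)$-action induced from monodromy of the fibration.

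Next, for an arbitrary local system $\mathcal{B}$ on $X$, I would consider the Leray--Serre spectral sequence of the pair fibration $(CE,E) \to X$ with coefficients in $p^*\mathcal{B}$:
\[
E_2^{p,q} \;=\; H^p\bigl(X;\,\mathcal{H}^q(CF_\bullet,F_\bullet;\,\mathcal{B}_\bullet)\bigr) \;\Longrightarrow\; H^{p+q}(CE,E;\,p^*\mathcal{B}).
\]
Since $H^q(CF_x,F_x;\mathbb{Z})$ vanishes for $q \neq k$ and equals $\mathcal{O}_x$ when $q = k$, the $E_2$-page is concentrated in the single row $q = k$; the sequence collapses and yields a natural isomorphism
\[
H^{p+k}(X,Y;\mathcal{B}) \;\cong\; H^p(X;\,\mathcal{O}\otimes\mathcal{B}).
\]
Substituting $\mathcal{O}\otimes\mathcal{B}$ for $\mathcal{B}$ and invoking the canonical trivialization $\mathcal{O}\otimes\mathcal{O} \cong \mathbb{Z}$ (automatic for any infinite-cyclic local system) yields the desired abstract form $H^p(X;\mathcal{B}) \cong H^{p+k}(X,Y;\mathcal{O}\otimes\mathcal{B})$. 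Setting $\mathcal{B} = \mathcal{O}$ and $p = 0$ in the original display, and using that $X$ is connected, isolates a distinguished generator $u \in H^k(X,Y;\mathcal{O})$ corresponding to $1 \in H^0(X;\mathbb{Z})$; this will be the Thom class.

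Finally, to identify the collapse isomorphism with $u \cup (-)$, I would invoke the multiplicative structure of the Leray--Serre spectral sequence: $u$ represents a permanent cycle in $E_\infty^{0,k}$, and cup product with it realizes the obvious bijection $E_r^{p,0} \to E_r^{p,k}$ on every page, passing to $H^*$ as $u\cup$. The main technical obstacle is precisely this last point: setting up the twisted-coefficient, pair Leray--Serre spectral sequence and verifying its multiplicativity. While routine in the absolute, simply connected setting, in the present generality the cleanest route is a CW approximation (Lemma \ref{lem_weak_homotopy}) followed by a skeletal induction on $X$, in which $u$ is assembled cell-by-cell from local Thom classes on the trivial pieces $(D^p,\partial D^p) \times (CS^{k-1},S^{k-1})$, with the inductive gluing controlled by the vanishing of the lower rows of the spectral sequence.
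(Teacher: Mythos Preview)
Your outline is correct and ultimately parallel to the paper's argument, though with a different framing. You lead with the Leray--Serre collapse and then fall back on a skeletal induction to identify the edge isomorphism with $u\cup(-)$; the paper skips the spectral-sequence language entirely and runs that induction directly as a Mayer--Vietoris argument over compact subsets $K \subseteq X$, in the style of the vector-bundle Thom isomorphism in Milnor--Stasheff, assembling $u$ as an inverse limit of local classes $u_K$. The one substantive operational difference is that the paper establishes the \emph{homological} cap-product isomorphism $\cap\, u\colon H_{k+*}(X,Y;\mathcal{B}) \to H_*(X;\mathcal{B}\otimes\mathcal{O})$ first (statement (2) of Theorem~\ref{thm_T_eqv}), exploiting that homology commutes with direct limits over compacta, and only afterwards invokes Theorem~\ref{thm_T_eqv} to pass to the cohomological cup-product form. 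This sidesteps both the inverse-limit subtlety for cohomology over a non-compact base and the twisted multiplicativity issue you flag; your direct cohomological route is sound but does require confronting one of those head-on, as you acknowledge.
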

\begin{proof}
Under the assumption $F \simeq S^{k-1}$, we shall prove $(X,Y)$ satisfies the Thom isomorphism. It suffices to show that $(CE, E)$ satisfies the Thom isomorphism.

The first step is to construct the desired orientation system $\mathcal{O}$ on $X$. For each $x\in X$ we define
\[
\mathcal{O}_{x} := H_{k} (CF_{x}, F_{x}; \mathbb{Z}) .
\]
Note that this is an infinite cyclic group. Let $\gamma: [0,1] \rightarrow X$ be a path such that $\gamma (0) = a$ and $\gamma (1) = b$. Then $\gamma$ induces an isomorphism from $\mathcal{O}_{a}$ to $\mathcal{O}_{b}$. Thus we get the desired orientation system $\mathcal{O}$. For convenience, we also denote by $\mathcal{O}$ the orientation system $p_{1}^{*} \mathcal{O}$ on $CE$.

The second step is to construct a Thom class $u$. Since $X$ is a CW complex, it's locally contractible. Therefore, $(CE, E)$ is locally homotopically a pair of trivial fibrations. By a Mayer-Vietoris argument similar to \cite[thm.~10.2]{milnor_stasheff}, we obtain the following conclusions:
\begin{enumerate}
\item For each finite subcomplex $K$ of $X$, for all local systems $\mathcal{B}$ on $X$ and $*<k$, the group
\[
H_{*} (CE|_{K}, E|_{K}; \mathcal{B})
\]
is trivial.
\item For each finite subcomplex $K$ of $X$, there exists a unique
\[
u_{K} \in H^{k} (CE|_{K}, E|_{K}; \mathcal{O})
\]
such that, $\forall x \in K$, the restriction of $u_{K}$ to $(CF_{x}, F_{x})$ is given by the preferred generator of
 \[
H^{k} (CF_{x}, F_{x}; \mathcal{O}_{x}) = \mathrm{Hom} (H_{k} (CF_{x}, F_{x}; \mathbb{Z}), \mathcal{O}_{x}) = \mathrm{Hom} (\mathcal{O}_{x}, \mathcal{O}_{x}) = \mathbb{Z}\, .
\]

\item If $K_{1} \subseteq K_{2} \subseteq X$ and $K_{i}$ are finite subcomplexes, then
\[
u_{K_{2}}|_{K_{1}} = u_{K_{1}}.
\]
\end{enumerate}
(Note that $(CF_{x}, F_{x})$ is homotopy equivalent to a finite CW pair. We can, therefore, apply the K\"{u}nneth Theorems \ref{thm_Kunneth_h} and \ref{thm_Kunneth_c} in the above argument.)

As homology is compactly supported, we have for any subcomplex $A$ of $X$, a canonical identification
\begin{equation}\label{equ_direct_lim}
H_{*} (CE|_{A}, E|_{A}; \mathcal{B}) = \lim_{\overrightarrow{K \subseteq A}} H_{*} (CE|_{K}, E|_{K}; \mathcal{B}),
\end{equation}
where the direct limit is taken over all finite subcomplexes $K$ of $A$. By (1) above, we infer
\begin{equation}\label{equ_homology_vanish}
H_{*} (CE|_{A}, E|_{A}; \mathcal{B}) = 0
\end{equation}
for all $*<k$. By (3) above, there exists a unique
\[
u \in \lim_{\overleftarrow{K \subseteq X}} H^{k} (CE|_{K}, E|_{K}; \mathcal{O})
\]
such that $u|_{K} = u_{K}$. We claim that
\begin{equation}\label{equ_inverse_lim}
\lim_{\overleftarrow{K \subseteq X}} H^{k} (CE|_{K}, E|_{K}; \mathcal{O}) = H^{k} (CE, E; \mathcal{O})
\end{equation}
which will imply $u \in H^{k} (CE, E; \mathcal{O})$. The verification of (\ref{equ_inverse_lim}) is similar to \cite[thm.~10.4]{milnor_stasheff} (if $\mathcal{O}$ is constant, we can just duplicate that argument). Let $x_{0}$ be the base point of $X$. For a subcomplex $A$ containing $x_{0}$, by the Universal Coefficient Spectral Sequence \cite[thm.~(2.3)]{Levine} (see also the proof of Lemma \ref{lem_acyclic}) and (\ref{equ_homology_vanish}), we obtain
\[
H^{k} (CE|_{A}, E_{A}; \mathcal{O}) = \mathrm{Hom}_{\Lambda} (H_{k} (CE_{A}, E|_{A}; \mathbf{\Lambda}); \mathcal{O}_{x_{0}}).
\]
This together with (\ref{equ_direct_lim}) implies (\ref{equ_inverse_lim}).

It remains to prove the isomorphism in Definition \ref{def_Thom}. By a Mayer-Vietoris argument, we see the isomorphism (2) in Theorem \ref{thm_T_eqv} holds for all $(CE|_{K}, E|_{K})$ where $K$ ranges over the finite subcomplexes of $X$. As homology is compactly supported, we obtain the isomorphism (2) for $(X,Y)$. Consequently, the  implication ``$\Leftarrow$" follows from Theorem \ref{thm_T_eqv}.
\end{proof}

It remains to prove the other direction of Theorem \ref{thm_T_ext}.

\begin{lemma}\label{lem_Thom_necessary}
The implication ``$\Rightarrow$'' of Theorem \ref{thm_T_ext} is valid.
\end{lemma}
\begin{proof}
By the $2$-connectivity of $(X,Y)$, we infer that $F$ is $1$-connected. Since both $X$ and $Y$ are CW complexes, by \cite[cor.~(13)]{Stasheff}, $F$ is homotopy equivalent to a CW complex. Thus it suffices to prove
\begin{equation}\label{equ_homology_sphere}
H_{*} (CF, F; \mathbb{Z}) =
\begin{cases}
\mathbb{Z}, & * =k; \\
0, & * \neq k.
\end{cases}
\end{equation}

Let $p: \tilde X \rightarrow X$ be the universal covering. Let $\tilde Y = p^{-1} (Y)$. We get
\[
p^{*} u \in H^{k} (\tilde X, \tilde Y; p^{*} \mathcal{O}) = H^{k} (\tilde X, \tilde Y; \mathbb{Z}).
\]
By Theorem \ref{thm_T_eqv}, we have the isomorphisms
\[
\cap u\: H_{k+*} (X,Y; \mathbf{\Lambda}) \overset{\cong}{\longrightarrow} H_{*} (X; \mathbf{\Lambda} \otimes \mathcal{O})
\]
which are equivalent to the isomorphisms
\begin{equation}\label{lem_Thom_necessary_1}
\cap p^{*} u\: H_{k+*} (\tilde X, \tilde Y; \mathbb{Z}) \overset{\cong}{\longrightarrow} H_{*} (\tilde X; \mathbb{Z}).
\end{equation}
Note that the commutative square
\[
\xymatrix{
\tilde Y \ar[r] \ar[d] & Y\ar[d]\\
\tilde X \ar[r] & X
}
\]
is a pullback. In particular, the homotopy fiber of the left vertical map may be identified with $F$.
There are two ways to complete the proof.

By (\ref{lem_Thom_necessary_1}) and Theorem \ref{thm_T_eqv}, we obtain isomorphisms in cohomology
\[
p^{*}u\, \cup\: H^{*} (\tilde X; \mathbb{Z}) \overset{\cong}{\longrightarrow} H^{k+*} (\tilde X, \tilde Y; \mathbb{Z}).
\]
Since $\tilde X$ is $1$-connected, the conclusion follows from \cite[lem.~I.4.3]{Browder2}.

Alternatively, apply \cite[thm.~B]{Klein1} or an argument using the Serre spectral sequence to \eqref{lem_Thom_necessary_1} directly. (Note that \cite{Browder2} applies the Serre spectral sequence to cohomology. We apply here a similar argument to the homology.)
\end{proof}

%%%------------------------------------------------------------------------------------------------------------------------------------------------------
%%%------------------------------------------------------------------------------------------------------------------------------------------------------
%%%------------------------------------------------------------------------------------------------------------------------------------------------------

\end{document}